%% file: AugmentLagr2026_Ver2.tex
\documentclass[11pt]{article}

\input{inputusepackages}

\newcommand{\OUTPUT}{\item[\textbf{Output:}]}

\usepackage{float}
\usepackage{graphicx}
\usepackage{adjustbox}
\usepackage{booktabs}
\usepackage[table]{xcolor}

\definecolor{TablePink}{HTML}{F28FA1}

\newcommand{\pinktablehead}[1]{%
    \textcolor{white}{\textbf{#1}}%
}

\begin{document}



\title{
\href{http://orion.math.uwaterloo.ca/~hwolkowi/henry/reports/ABSTRACTS.html}{
Optimal Nonergodic Primal-Dual Complexity of Efficient Inexact Parameter-Free Augmented Lagrangian Methods
}
   \footnote{
Emails respectively: a3sujana@uwaterloo.ca, 
sghadimi@uwaterloo.ca,
hwolkowicz@uwaterloo.ca}
}

\author{
\href{https://asujanani6.github.io/}
 {Arnesh Sujanani}\thanks{
\href{http://www.math.uwaterloo.ca/co/}{Department of Combinatorics and Optimization}, University of Waterloo,  ON, Canada}
\and 
\href{https://uwaterloo.ca/management-science-engineering/contacts/saeed-ghadimi}{Saeed
Ghadimi}\thanks{Department of Management Science and Engineering, University of
Waterloo, ON, Canada}
 \and
\href{https://www.math.uwaterloo.ca/~hwolkowi/}
{Henry Wolkowicz}\footnotemark[2]
}


\date{
Revision of \today, \currenttime
}

\maketitle
\tableofcontents

{\bf Key words and phrases:
augmented Lagrangian, first-order methods, parameter-free, optimal complexity, verifiable termination, primal-dual solution, KKT point
}

{\bf AMS subject classifications: 65K05, 49M37, 90C06, 65K10, 90C30}

%
%
\listofalgorithms
\listoftables

\begin{abstract}
Augmented Lagrangian (AL) methods are a classical framework for constrained optimization, but for directly verifiable approximate KKT points, known first-order complexity bounds for standard inexact AL methods are suboptimal, while the best known proximal augmented Lagrangian (PAL) bounds retain an additional logarithmic factor. We consider linearly constrained convex composite problems with a smooth convex term and a possibly nonsmooth closed proper convex term with compact domain. We develop three inexact AL schemes that preserve the standard AL subproblem structure and attain the optimal primal-dual complexity $\mathcal O(\epsilon^{-1})$ in the convex setting, improving prior AL bounds of $\mathcal O(\epsilon^{-4/3})$, $\mathcal O(\epsilon^{-7/4})$, and $\mathcal O(\epsilon^{-2})$, and removing the logarithmic factor from PAL guarantees. Two variants are parameter-free, and all three admit nonergodic guarantees, including a stronger last-iterate guarantee for one variant. 

These results show that proximal regularization, ergodic averaging, and prior knowledge of problem-dependent constants are not intrinsic requirements for attaining optimal verifiable primal-dual complexity within the standard AL framework. A key ingredient is a parameter-free accelerated method that computes verifiable stationarity certificates for the standard, unregularized AL subproblems with optimal complexity. In the strongly convex setting, our methods attain near-optimal complexity $\mathcal O(\epsilon^{-1/2}\log(\epsilon^{-1}))$, with two parameter-free variants. Numerical experiments on six problem classes, including elastic-net least-squares regression, group-sparse Huberized support vector machines, and a quantum semidefinite program (SDP), demonstrate substantial computational advantages over a representative PAL method, with speedups frequently ranging from $5$ to $50$ times.
\end{abstract}

\section{Introduction}\label{sec:intro}
\phantomsection
\index{$\psi(z)$, objective function}
\index{objective function, $\psi(z)$}
\index{$\psi_s(z)$, objective function smooth part}
\index{objective function smooth part, $\psi_s(z)$}
\index{$\psi_n(z)$, objective function nonsmooth part}
\index{objective function nonsmooth part, $\psi_n(z)$}
\phantomsection

Augmented Lagrangian (AL) methods are a fundamental tool for constrained optimization. By replacing difficult projections onto the feasible region with a sequence of penalized subproblems and multiplier updates, they provide a flexible framework that is particularly well suited to large-scale linearly constrained models, including game theory applications, radar communication waveform design, conic and semidefinite optimization problems, and recent low-rank SDP approaches \cite{AL1Lu, AL16Necoara, AL22Ding, AL42Monteiro, AL43Aguirre, AL26Hou, AL12Tang, AL25Wang, AL57Kananian, AL59Bazzi, AL62Deng}.

In this paper, we investigate whether the classical inexact AL architecture is itself sufficient to attain optimal and directly verifiable first-order guarantees for linearly constrained convex composite optimization problems of the form
\begin{equation}
\label{LinearConstraints}
\textdef{$\pLC$} :=\min \left\{\psi(z):=\psi_s(z)+\psi_n(z): \cA z=b\right\},
\end{equation}
where \textdef{$\cA:\E\to \Rm$} is a nonzero linear transformation, $\mathbb E$ is a Euclidean space, and \textdef{$\psi: \mathbb E\rightarrow \R$} is convex or $\bar \mu$-strongly convex. The nonsmooth term \textdef{$\psi_n:\cN\subset \mathbb E\rightarrow \R$} is closed, proper, and convex, and its domain is assumed to be a \textdef{compact convex set, $\Dn$}; in particular, its diameter is finite:
\begin{equation}\label{diameter}
D:=\sup \{ \|z-z'\| : z, z' \in \Dn \} < \infty.
\end{equation}
The smooth term \textdef{$\psi_s:\mathbb E\rightarrow \R$} is convex, differentiable, and \textdef{$\bar L$-smooth}, meaning that
\index{$\Dn$, compact convex domain}
\begin{equation}
\label{ineq:uppercurvature1}
\|\nabla \psi_s(x)-\nabla \psi_s(y)\|\leq \bar L\|x-y\|.
\end{equation}

Given a nondecreasing sequence of positive penalty parameters $\{c_k\}$, an inexact AL method proceeds as follows. At iteration $k$, given the current multiplier $p_{k-1}$, it approximately minimizes the \textdef{augmented Lagrangian function} $\mathcal L_{c_k}(z,p_{k-1})$, where
\begin{equation}\label{lagrangian2}
{\cal L}_{c}(z,p):=\cL^s_{c}(z,p)+\psi_n(z),
\end{equation}
and $\cL_c^s$ denotes its smooth part, namely,
\begin{equation}\label{smooth part}
\textdef{$\cL^s_c(z,p):=\psi_s(z)+\left\langle p,\cA z-b\right\rangle+\frac{c}{2}\|\cA z-b\|^2$}.
\end{equation}
It then performs the full multiplier update
\begin{equation}\label{eq:dual_update}
p_{k}=p_{k-1}+c_k(\cA z_{k}-b).
\end{equation}
Thus, each outer iteration consists of an inexact primal minimization followed by a dual ascent step.

Inexact proximal augmented Lagrangian (PAL) methods modify this template by adding a proximal regularization term. Given positive scalars $\{\lambda_k\}$, these methods approximately minimize $\mathcal P^{\lambda_k}_{c_k}(z,p_{k-1})$, where
\begin{equation}\label{PALFunction}
    \mathcal P^{\lambda}_{c}(z,p):=\mathcal L_c(z,p)+\frac{1}{2\lambda}\|z-z_{k-1}\|^2.
\end{equation}
The proximal term can simplify the complexity analysis and improve conditioning, but it also changes the subproblem and can be computationally undesirable when the unregularized AL subproblem has exploitable structure. This distinction is structural: PAL methods solve a different sequence of primal subproblems, so their complexity guarantees do not by themselves determine what is achievable within the standard, unregularized AL framework.

Despite substantial progress, existing first-order inexact AL methods do not simultaneously provide all of the following features for \Cref{LinearConstraints}: an optimal primal-dual/KKT complexity bound, a fully verifiable termination criterion, no proximal regularization of the AL subproblems, and no prior knowledge of problem-dependent constants. Some AL complexity guarantees target primal objective and feasibility criteria involving the unknown optimal value \cite{AL18Aybat, AL2Xu, AL16Necoara, AL17Patrascu, AL23Yurtsever, AL30Nedelcu}. Although important theoretically, such guarantees do not directly yield implementable stopping rules. For verifiable primal-dual/KKT certificates, the known first-order AL bounds remain suboptimal: Xu obtains an $\mathcal O(\epsilon^{-4/3})$ primal-dual complexity bound \cite{AL2Xu}; Necoara obtains an $\mathcal O(\epsilon^{-3/2})$ bound \cite{AL16Necoara}; Liu et al. establish an $\mathcal O(\epsilon^{-2})$ nonergodic primal-dual bound \cite{AL14Liu}; and both Lan and Monteiro's method and the standard first-order I-AL method of Lu and Zhou attain $\mathcal O(\epsilon^{-7/4})$ KKT-type bounds \cite{AL13Lan,AL1Lu}. Necoara's fast inexact AL method, which incorporates Nesterov-type acceleration into the outer scheme rather than following the traditional inexact AL framework, achieves an $\mathcal O(\epsilon^{-4/3})$ primal-dual complexity bound \cite{AL16Necoara}. Lu and Zhou's adaptively regularized PAL method improves the KKT complexity to $\mathcal O(\epsilon^{-1}\log(\epsilon^{-1}))$ \cite{AL1Lu}, and related PAL schemes obtain similar logarithmically suboptimal bounds \cite{LuM, AL8Lin, AL7Li, AL5Burns}. Thus, the remaining issue is not merely to improve a complexity exponent, but to determine the capabilities of the standard AL framework. 

This leads to the central question addressed in this paper: can an unregularized inexact AL method attain the \textbf{optimal} $\mathcal O(\epsilon^{-1})$ first-order complexity for \textbf{directly verifiable approximate KKT points} of \Cref{LinearConstraints}, while avoiding both proximal regularization of the AL subproblems and problem-dependent parameter tuning? We answer this question \textbf{affirmatively}. For merely convex objectives, we develop inexact AL methods with the \textbf{optimal} $\mathcal O(\epsilon^{-1})$ first-order complexity for computing \textbf{approximate primal-dual/KKT solutions}; two of the proposed variants are \textbf{parameter-free.} For strongly convex objectives, we establish a near-optimal $\mathcal O(\epsilon^{-1/2}\log(\epsilon^{-1}))$ complexity bound without requiring knowledge of the strong convexity parameter or any other problem-dependent parameters. 

Two ideas are central to the analysis. First, approximate stationarity of the standard AL subproblems, together with the full multiplier update, yields a telescoping dual-distance estimate that directly controls primal feasibility without ergodic averaging. Second, parameter-free accelerated regularization and restart schemes minimize the unregularized AL subproblems and produce verifiable composite-stationarity certificates at optimal or near-optimal inner complexity. For the adaptive last-iterate variant, a uniform multiplier bound further allows the increasing penalty parameter to directly control feasibility of the current iterate.

We also report numerical results for \textbf{six} important classes of linearly constrained optimization problems: \textbf{quadratic programs}, \textbf{logistic regression}, \textbf{elastic-net least-squares regression}, \textbf{group-sparse Huberized support vector machines (SVMs)}, \textbf{Bayesian D-optimal experimental design}, and a \textbf{Huberized quantum semidefinite program (SDP)}. These experiments show that the proposed AL methods can substantially outperform a representative PAL scheme \cite{LuM}, with \textbf{speedups} frequently exceeding \textbf{5--50x}.

\subsection{Literature Review}
\subsubsection{Parameter-Free Optimization Methods for Unconstrained Minimization}
When $\cA=0$ in \Cref{LinearConstraints}, the problem reduces to unconstrained composite minimization. Parameter-free first-order methods are attractive in this setting because they spare users from estimating quantities such as global Lipschitz constants, growth constants, and strong convexity moduli \cite{AL35Suh, AL44Liang, AL45Liang, AL39Sujanani, AL36Zhou, AL27Lan, AL37Li, AL34Wu, AL60Jiang, AL61Wu}. In terms of objective-value accuracy, when $\psi$ is convex, backtracking FISTA and related accelerated composite-gradient methods attain the optimal $\mathcal O(\epsilon^{-1/2})$ complexity for computing a point $z$ such that $\psi(z)-\psi_{*}\leq\epsilon$ \cite{AL46Beck, AL47Nesterov, AL49Scheinberg}.

For verifiable termination, however, objective residuals are less useful than stationarity measures such as gradient norms or composite gradient mappings. When $\psi$ is convex, standard accelerated schemes generally attain only the suboptimal $\mathcal O(\epsilon^{-2/3})$ complexity for reducing such stationarity measures below $\epsilon$ \cite{AL48Kim, AL51Shi, AL52Kim, AL53Li, AL54Monteiro}. Nesterov's proximal regularization technique yields a near-optimal $\mathcal O(\epsilon^{-1/2}\log(\epsilon^{-1}))$ bound \cite{AL29Nesterov}, while subsequent parameter-dependent methods remove the logarithmic factor under additional algorithmic requirements, such as a prescribed iteration horizon or consecutive method calls \cite{AL33Lee, AL50Nesterov}.

Lan et al. \cite{AL27Lan} recently developed accumulative-regularization methods that attain the optimal $\mathcal O(\epsilon^{-1/2})$ stationarity complexity and include parameter-free variants. Their results substantially clarify the complexity landscape for unconstrained stationarity. The linearly constrained composite setting considered here is more demanding: a verifiable certificate must simultaneously control Lagrangian stationarity and feasibility, while the AL subproblems themselves vary with both the multiplier and the penalty parameter.

\subsubsection{Inexact Augmented Lagrangian Methods}
The exact AL method dates back to Hestenes and Powell \cite{AL55Hestenes, AL56Powell}, and Rockafellar established convergence of inexact AL schemes through the proximal-point interpretation \cite{AL41Rockafellar}. Modern first-order analyses quantify both the accuracy with which the AL subproblems must be solved and the total number of inner first-order iterations required to reach a prescribed target accuracy.

One line of work focuses on primal accuracy. For convex objectives, Aybat and Iyengar \cite{AL18Aybat} obtained an $\mathcal O(\epsilon^{-1}\log(\epsilon^{-1}))$ complexity bound, and Xu \cite{AL2Xu} improved this result to the optimal $\mathcal O(\epsilon^{-1})$ gradient-evaluation complexity for computing $z\in\Dn$ satisfying
\begin{equation}\label{PrimalSolution}
|\psi(z)-\pLC|\leq \epsilon, \quad \|\cA z-b\|\leq \epsilon,
\end{equation}
where $\pLC$ is defined in \Cref{LinearConstraints}. Such guarantees are theoretically important, but \eqref{PrimalSolution} is generally not a practical stopping rule because $\pLC$ is unknown.

A more implementable target is an approximate KKT point, namely, a triple $(z,p,r)$ satisfying
\begin{equation}\label{KKTPoint}
 r \in \nabla \psi_s(z)+\partial \psi_n(z)+\cA^{*}p, \quad \|r\|\leq
\epsilon, \quad \|\cA z-b\|\leq \epsilon.
\end{equation}
This certificate can be checked directly within an AL method. The main first-order AL complexity results for primal-dual or KKT-type criteria therefore provide the natural benchmarks for this paper. Lan and Monteiro \cite{AL13Lan} analyze a first-order inexact AL method for computing a primal-dual solution according to the verifiable criterion in \Cref{KKTPoint}. They establish an $\mathcal O(\epsilon^{-7/4})$ complexity bound, although their criterion for terminating the AL subproblem solves is not directly verifiable because it depends on the unknown optimal values of the corresponding augmented Lagrangian subproblems. Lu and Zhou \cite{AL1Lu} and Necoara \cite{AL16Necoara} establish KKT complexity bounds of $\mathcal O(\epsilon^{-7/4})$ and $\mathcal O(\epsilon^{-3/2})$, respectively, for standard first-order I-AL methods that also seek points satisfying \eqref{KKTPoint}. Like the method of Lan and Monteiro, these methods are not parameter-free and use AL-subproblem termination criteria that depend on unknown optimal values of the AL objectives.

Xu \cite{AL2Xu} and Liu et al. \cite{AL14Liu} also study the complexity of inexact AL methods for computing primal-dual solutions, although their solution criteria differ slightly from \Cref{KKTPoint}. Xu establishes an ergodic primal-dual complexity bound of $\mathcal O(\epsilon^{-4/3})$, whereas Liu et al. establish a nonergodic primal-dual complexity bound of $\mathcal O(\epsilon^{-2})$. These results demonstrate steady progress toward verifiable AL certificates, but their complexities remain worse than $\mathcal O(\epsilon^{-1})$, and they do not provide the parameter-free, genuine-AL guarantee pursued here. More recently, easily implementable relative-error AL frameworks have been developed, but they attain a suboptimal $\mathcal O(\epsilon^{-2})$ complexity bound for \eqref{KKTPoint} \cite{AL4Qu}.

PAL and other proximal or regularized AL frameworks constitute a second line of work. Lu and Mei \cite{LuM} establish an $\mathcal O(\epsilon^{-1}\log(\epsilon^{-1}))$ complexity bound in the convex case and an $\mathcal O(\epsilon^{-1/2}\log(\epsilon^{-1}))$ bound in the strongly convex case for KKT-type solutions. Related PAL and inexact proximal-point analyses appear in \cite{AL7Li, AL8Lin, AL5Burns}. Although powerful, these methods introduce the proximal term in \Cref{PALFunction}, thereby changing the primal subproblem, and their known convex-case bounds retain a logarithmic dependence on the target accuracy. Moreover, strongly convex variants typically require knowledge of the strong convexity parameter.

Accordingly, our contribution is not merely a sharper complexity exponent. It establishes that, within the problem class studied here, the standard, unregularized AL architecture is sufficient to combine directly verifiable KKT termination with optimal nonergodic primal-dual complexity. In two variants, this is achieved without knowledge of problem-dependent constants.


\label{sect:litrev}

\subsection{Summary of Contributions}
We now summarize our main contributions.
\begin{itemize}
    \item[1.] We establish that standard inexact AL methods can attain the optimal $\mathcal O(\epsilon^{-1})$ first-order complexity for computing directly verifiable primal-dual/KKT solutions in the merely convex setting. We realize this result through three new inexact AL schemes. This improves upon the $\mathcal O(\epsilon^{-4/3})$ primal-dual bound of Xu \cite{AL2Xu}, the $\mathcal O(\epsilon^{-3/2})$ primal-dual bound of Necoara \cite{AL16Necoara}, the $\mathcal O(\epsilon^{-2})$ nonergodic primal-dual bound of Liu et al. \cite{AL14Liu}, and the $\mathcal O(\epsilon^{-7/4})$ primal-dual/KKT-type bounds of Lan--Monteiro \cite{AL13Lan} and the standard I-AL method of Lu--Zhou \cite{AL1Lu}. Our complexity bound also improves upon logarithmically suboptimal $\mathcal O(\epsilon^{-1}\log(\epsilon^{-1}))$ guarantees available for regularized PAL type methods \cite{AL1Lu, LuM, AL8Lin, AL7Li, AL5Burns}.

    \item[2.] We provide a detailed nonergodic convergence analysis of all three proposed AL methods, yielding guarantees for individual generated iterates rather than ergodic averages. Two variants yield best-iterate guarantees, whereas the third establishes the stronger last-iterate guarantee under mildly stronger assumptions. The last-iterate variant is also more adaptive and employs a novel mechanism for updating the tolerances used to solve its AL subproblems.
    
    \item[3.] In the convex setting, two of the three AL variants are parameter-free. To the best of our knowledge, they are the first parameter-free inexact AL methods that combine verifiable KKT termination with the optimal $\mathcal O(\epsilon^{-1})$ primal-dual complexity.

    \item[4.] A key ingredient in these bounds is a parameter-free regularization method with the optimal $\mathcal O(\epsilon^{-1/2})$ complexity for solving the unconstrained composite AL subproblems. The method extends the parameter-free ideas of \cite{AL27Lan} from smooth unconstrained minimization to the unconstrained composite setting and uses a termination criterion specifically tailored to the AL analysis. Beyond its role in the AL analysis, PF-AR therefore provides an optimal parameter-free method for computing directly verifiable stationarity certificates in convex composite minimization with compact nonsmooth domain.
    
    \item[5.] When $\psi$ is $\bar \mu$-strongly convex with $\bar \mu>0$, we establish a near-optimal $\mathcal O(\epsilon^{-1/2}\log(\epsilon^{-1}))$ complexity bound. Unlike the parameter-dependent methods of \cite{AL11Li, LuM}, two of our methods are fully parameter-free. The AL subproblems are solved using a simplified restarted FISTA scheme developed in this paper and inspired by \cite{AL39Sujanani}. Compared with the scheme in \cite{AL39Sujanani}, our method uses a simpler restart condition and permits a more flexible choice of restart point.

    \item[6.] Finally, numerical experiments on six important classes of linearly constrained problems demonstrate that the proposed AL methods can substantially outperform a representative PAL method by Lu and Mei \cite{LuM}. On many instances, the observed speedups often range from 5x to 50x.
\end{itemize}

The two tables below summarize our six main contributions further.

\begin{table}[H]
\centering

\caption{Comparison of Methods for Convex Linearly-Constrained Optimization}

\begin{adjustbox}{max width=\linewidth}
\scalebox{0.7}{%
\begin{tabular}{|lccc|}
\toprule

\rowcolor{TablePink}
\pinktablehead{Name}
&
\pinktablehead{Primal-Dual Complexity}
&
\pinktablehead{Parameter-Free}
&
\pinktablehead{AL or PAL Method}
\\

\midrule

Lan and Monteiro \cite{AL13Lan}
&
$\mathcal{O}\left(\epsilon^{-7/4}\right)$
&
No
&
AL
\\

Liu et al.~\cite{AL14Liu}\footnotemark
&
$\mathcal{O}\left(\epsilon^{-2}\right)$
&
No
&
AL
\\

Lu and Zhou \cite{AL1Lu}
&
$\mathcal{O}\left(\epsilon^{-7/4}\right)$
&
No
&
AL
\\

Necoara \cite{AL16Necoara}
&
$\mathcal{O}\left(\epsilon^{-3/2}\right)$
&
No
&
AL
\\

Xu \cite{AL2Xu}\footnotemark[\value{footnote}]
&
$\mathcal{O}\left(\epsilon^{-4/3}\right)$
&
No
&
AL
\\

\hline

This Work (\Cref{alg:O-IAL})
&
$\mathcal{O}\left(\epsilon^{-1}\right)$
&
No
&
AL
\\

This Work (\Cref{alg:OPF-IAL})
&
$\mathcal{O}\left(\epsilon^{-1}\right)$
&
Yes
&
AL
\\

This Work (\Cref{alg:APF-IAL})
&
$\mathcal{O}\left(\epsilon^{-1}\right)$
(stronger last-iterate)
&
Yes
&
AL
\\

\hline

Burns and Liang \cite{AL5Burns}
&
$\mathcal{O}\left(
    \epsilon^{-1}\log(\epsilon^{-1})
\right)$
&
No
&
PAL
\\

Li and Qu \cite{AL7Li}
&
$\mathcal{O}\left(
    \epsilon^{-1}\log(\epsilon^{-1})
\right)$
&
No
&
PAL
\\

Lin and Xu \cite{AL8Lin}
&
$\mathcal{O}\left(
    \epsilon^{-1}\log(\epsilon^{-1})
\right)$
&
No
&
PAL
\\

Lu and Zhou \cite{AL1Lu}
&
$\mathcal{O}\left(
    \epsilon^{-1}\log(\epsilon^{-1})
\right)$
&
No
&
PAL
\\

Lu and Mei \cite{LuM}
&
$\mathcal{O}\left(
    \epsilon^{-1}\log\left(\epsilon^{-1}\right)
\right)$
&
Yes
&
PAL
\\

\bottomrule
\end{tabular}%
}
\end{adjustbox}

\end{table}

\footnotetext[\value{footnote}]{%
It should be noted that the notions of approximate primal--dual solution employed by
Liu et al.~\cite{AL14Liu} and Xu~\cite{AL2Xu} are closely related to,
but formally distinct from, those employed by the other works listed
in the table.%
}

\begin{table}[H]
\centering

\begingroup
\setlength{\tabcolsep}{4pt}
\renewcommand{\arraystretch}{1}

\setlength{\aboverulesep}{0pt}
\setlength{\belowrulesep}{0pt}

\arrayrulecolor{black}

\begin{adjustbox}{max width=\linewidth}
\scalebox{0.7}{%
\begin{tabular}{|lccc|}
\toprule

\rowcolor{TablePink}
\pinktablehead{Name}
&
\pinktablehead{Primal-Dual Complexity}
&
\pinktablehead{Parameter-Free}
&
\pinktablehead{AL or PAL Method}
\\

\midrule

This Work (\Cref{alg:O-IAL})
&
$\mathcal{O}\left(
    \epsilon^{-1/2}\log(\epsilon^{-1})
\right)$
&
No
&
AL
\\

This Work (\Cref{alg:OPF-IAL})
&
$\mathcal{O}\left(
    \epsilon^{-1/2}\log(\epsilon^{-1})
\right)$
&
Yes
&
AL
\\

This Work (\Cref{alg:APF-IAL})
&
$\mathcal{O}\left(
    \epsilon^{-1/2}\log(\epsilon^{-1})
\right)$
&
Yes
&
AL
\\

\hline

Li and Xu \cite{AL11Li}
&
$\mathcal{O}\left(
    \epsilon^{-1/2}\log(\epsilon^{-1})
\right)$
&
No
&
AL
\\

Lu and Mei \cite{LuM}
&
$\mathcal{O}\left(
    \epsilon^{-1/2}\log(\epsilon^{-1})
\right)$
&
No
&
PAL
\\

\bottomrule
\end{tabular}%
}
\end{adjustbox}

\endgroup

\caption{Comparison of Methods for Strongly-Convex Linearly-Constrained Optimization}
\label{ALMethodsComparison2}
\end{table}

\subsection{Basic Definitions and Notation}\label{subsec:notation}

\index{Euclidean space, $\E$}
\index{$\E$, Euclidean space}

We introduce the notation used throughout the paper. Let $\mathbb{R}_+$ and
$\mathbb{R}_{++}$ denote the sets of nonnegative and strictly positive real
numbers, respectively. We work in a finite-dimensional inner product space
$\E$ with inner product $\langle\cdot,\cdot\rangle$ and associated norm
$\|\cdot\|$. Let $A:\E\to\mathbb{R}^m$ be a nonzero linear transformation,
and let $\sigma_A^+$ denote its smallest positive singular value.
\index{$\sigma^+_A$, smallest positive singular value}
\index{smallest positive singular value, $\sigma^+_A$}

For a closed convex set $Z\subseteq\E$, we denote its boundary by $\bd Z$,
\index{boundary, $\bd Z$}
\index{$\bd Z$, boundary}
the distance from $z\in\E$ to $Z$ by
\[
    \textdef{$\dist(z,Z):=\inf_{u\in Z}\|z-u\|$},
\]
and its indicator function by $\iota_Z:\E\to(-\infty,+\infty]$, where
\[
    \iota_Z(z):=
    \begin{cases}
        0, & z\in Z,\\
        +\infty, & \text{otherwise}.
    \end{cases}
\]
For $h:\E\to(-\infty,+\infty]$, define
\[
    \dom h:=\{z\in\E:h(z)<\infty\}.
\]
The function $h$ is proper if $\dom h\neq\emptyset$, and
\textdef{$\cConv(\E)$} denotes the class of proper, lower semicontinuous,
convex functions on $\E$.
\index{indicator function, $\iota_Z$}
\index{$\iota_Z$, indicator function}
\index{proper function $h$}

For a proper function $h$, its $\varepsilon$-subdifferential at $z\in\E$ is
\[
    \partial_\varepsilon h(z):=
    \left\{
        u\in\E:
        h(z')\ge h(z)+\langle u,z'-z\rangle-\varepsilon,
        \quad \forall z'\in\E
    \right\},
\]
and $\partial h(z):=\partial_0h(z)$. For a closed convex set $C$ and
$z\in C$, its $\varepsilon$-normal cone is
\[
    \textdef{$N_C^\varepsilon(z)$}:=
    \{\xi\in\E:\langle \xi,u-z\rangle\le\varepsilon,\quad \forall u\in C\},
\]
and $N_C(z):=N_C^0(z)$.

If $f:\E\to\mathbb{R}$ is differentiable, its affine approximation at
$\bar z$ is
\begin{equation}\label{eq:defell}
    \ell_f(z;\bar z):=
    f(\bar z)+\inner{\nabla f(\bar z)}{z-\bar z},
    \quad \forall z\in\E .
\end{equation}
If $f$ is convex, then $\ell_f(\cdot;\bar z)$ is the unique affine minorant
of $f$ at $\bar z$.
\index{affine approximation, $\ell_f(z;\bar z)$}
\index{$\ell_f(z;\bar z)$, affine approximation}

For $t>0$ and $b>1$, define
\begin{equation}\label{logarithmicNotation}
    \log_b^+(t):=\max\{\log_b(t),0\},
    \qquad
    \log_b^{++}(t):=\max\{\log_b(t),1\}.
\end{equation}
When no ambiguity arises, $\log(t)$ denotes the natural logarithm.

We let the Lagrangian and dual functions of \Cref{LinearConstraints} be
defined respectively as 
\begin{equation}\label{Lagrangian Function}
  \textdef{$\cL(z,p):=\psi(z)+\left\langle p,\cA z-b\right\rangle$},
\quad \textdef{$D(p):=\min_z \cL(z,p)$}.
\end{equation}
Then the dual problem and weak duality of \Cref{LinearConstraints} are
\begin{equation}
\label{dual}
\pLC \geq d_{LC}^{*}:=\max_p D(p).
\end{equation}

Let $\mathcal{P}_\star$ denote the set of optimal dual solutions of
problem~\cref{LinearConstraints}. For $p\in\mathbb{R}^m$, define the squared
distance to dual optimality by
\begin{equation}\label{DualDomainNew}
    \Pi(p):=\dist^2(p,\mathcal{P}_\star)
    =\inf_{p_\star\in\mathcal{P}_\star}\|p-p_\star\|^2.
\end{equation}
\index{distance to dual optimality, $\Pi(p)$}
\index{$\Pi(p)$, distance to dual optimality}
\index{smallest positive singular value, $\sigma^+_Q$}
\index{$\sigma^+_Q$, smallest positive singular value}
\index{boundary, $\partial Z$}
\index{$\partial Z$, boundary}

\section{Linearly-Constrained Convex Optimization}
\index{inexact augmented Lagrangian methods}
\phantomsection

In this section, we develop inexact augmented Lagrangian (AL) methods
for solving the linearly constrained convex optimization problem
\cref{LinearConstraints} under the assumption that $\psi$ is convex.
More specifically, we design inexact AL methods that compute approximate
solutions satisfying suitable primal-dual optimality conditions for the
pair \cref{LinearConstraints,dual}, while attaining optimal complexity
guarantees.

To establish convergence results, we impose the following standard assumption.

\begin{assump}
\label{ass:A1} 

\index{$(z_*,p_*)$, optimal primal-dual pair}
There exists an \textdef{optimal primal-dual pair, $(z_*,p_*)$},
satisfying the first-order primal-dual optimality conditions
\begin{align}\label{Optimal Primal-Dual}
  0\in \nabla \psi_s(z_*) + \partial \psi_n(z_{*})+\mathcal{A}^*p_*
\text{  (d.f.)}, \qquad  \mathcal{A}z_*-b = 0\text{  (p.f.)}.
\end{align}
Here, the multiplier $p_{*}$ attains the optimal dual value $d^{*}_{LC}$
in \Cref{dual}, and $\cA^*$ denotes the adjoint of the linear operator
$\mathcal A$.
\end{assump}

\index{tolerances}
\index{tolerances!\Cref{LinearConstraints} pair $(\hat \epsilon,\hat \rho)$}
\index{$(\hat \epsilon,\hat \rho)\in \mathbb R_{++}^2$, tolerance pair}
\index{$(\hat \epsilon, \hat \rho)$-approximate optimal solution}

We next define the notion of approximate solution used by our inexact AL
methods throughout the paper.

\begin{definition}\label{ApproxOptSolution}
Given problem \Cref{LinearConstraints} under \Cref{ass:A1}, and a
\textdef{tolerance pair, $(\hat \epsilon,\hat \rho)\in \mathbb R_{++}^2$},
we say that a triple $(z,p,r)$ is a
\textbf{$(\hat \epsilon, \hat \rho)$-approximate primal-dual optimal solution}
if it approximately satisfies the primal-dual optimality conditions in
\cref{Optimal Primal-Dual}, i.e., the triple satisfies
\begin{gather}\label{ALsolutiontype}
    r \in \nabla \psi_s(z)+\partial \psi_n(z)+\cA^{*}p, \quad \|r\|\leq
\hat \rho, \quad \|\cA z-b\|\leq \hat \epsilon.
\end{gather}
\end{definition}

Condition \cref{ALsolutiontype} encodes approximate dual stationarity and
primal feasibility.


To approximately minimize \Cref{lagrangian2}, our inexact AL methods employ PF-AR, the optimal parameter-free method developed in \Cref{PF-AR}. PF-AR nontrivially extends the parameter-free method of Lan et al.~\cite{AL27Lan} from unconstrained minimization to convex composite optimization by using a new line-search mechanism and a different termination criterion.

The remainder of this section is organized as follows. \Cref{PF-AR} presents 
the parameter-free AL subproblem solver, PF-AR,
together with its optimal complexity guarantees for convex composite
minimization.
\Cref{OIALMethod} presents O-IAL, an optimal dual-type inexact AL method that maintains approximate dual stationarity at each iteration, and proves its optimal best-iterate guarantees for \Cref{LinearConstraints}. O-IAL is not parameter-free as it requires knowledge of the diameter $D$ of $\Dn$.
\Cref{OPF-IAL} then presents OPF-IAL, an essentially restarted version of O-IAL that is parameter-free and requires no knowledge of $D$ or any other problem parameter while preserving optimal best-iterate guarantees. Finally, \Cref{APFIAL} presents APF-IAL, a more practical adaptive parameter-free method that chooses the penalty parameter and subproblem tolerances adaptively and achieves optimal last-iterate guarantees under assumptions only mildly stronger than those required for O-IAL and OPF-IAL.

\subsection{PF-AR Method: Parameter-Free Convex Subproblem Solver}\label{PF-AR}
\phantomsection
This section presents a parameter-free accumulative regularization (AR)
method, called PF-AR, which will be used by O-IAL, OPF-IAL, and APF-IAL, to approximately solve their AL subproblems. More generally, PF-AR solves the unconstrained variant of the composite optimization problem in
\Cref{LinearConstraints}, i.e.,
\begin{equation}\label{ARoptProblem}
\min \left\{\psi(x):=\psi_s(x)+\psi_n(x)\right\},
\end{equation}
where $\psi_s$ is convex and $\bar L$-smooth, and $\psi_n$ is a closed proper convex function with compact domain $\Dn$ of diameter $D$. We also assume that the proximal mapping of $\psi_n$ can be computed efficiently. More specifically, for any $\sigma,\eta\geq 0$ and any $x,\bar x\in \E$, we assume that the optimization problem
\[
x^{++}(\eta,\sigma,\bar x;x):=\argmin_{u}
\inner{\nabla \psi_s(x)}{u}+\psi_n(u)
+\frac{\sigma}{2}\|u-\bar x\|^2
+\frac{\eta}{2}\|u-x\|^2
\]
can be solved efficiently,
where recall
that $\E$ is a finite-dimensional inner product space. 

The proposed PF-AR algorithm can be viewed as an extension of the
parameter-free accelerated regularization (AR) method of \cite{AL27Lan},
originally developed for unconstrained problems, to the composite setting.
This extension, however, is nontrivial. In particular, PF-AR employs a new
backtracking line-search mechanism for estimating the Lipschitz constant,
which differs fundamentally from the parameter-free strategy in
\cite{AL27Lan}. This modification is essential for obtaining optimal
convergence guarantees in terms of the norm of a subgradient of $\psi$,
which is the stationarity criterion required in our augmented Lagrangian
analysis.

This stopping criterion also distinguishes our approach from the
parameter-dependent composite AR method in \cite{AL27Lan}, which terminates
upon finding $(x,\eta)$ such that $\|G_\eta(x)\|$ is small, where
\begin{equation}\label{Gradient Mapping}
G_{\eta}(x):=\eta(x-x^{+}(\eta;x)), 
\quad   
x^{+}(\eta;x):=\argmin_{u}
\inner{\nabla \psi_s(x)}{u}+\psi_n(u)
+\frac{\eta}{2}\|u-x\|^2.
\end{equation}
In contrast,
PF-AR terminates when $\|v\|$ is small for some
$
v \in \nabla \psi_s(x)+\partial \psi_n(x).
$
To obtain this guarantee, our method enforces two key inequalities through a
backtracking line-search procedure at every iteration. Indeed, the PF-AR method is a multi-loop algorithm that is fully parameter-free: it
requires no prior knowledge of either the diameter $D$ or the Lipschitz
constant $\bar L$. To remove the need to know $D$, PF-AR repeatedly calls
a second algorithm, AR-L, with different guesses for the diameter. The AR-L
method itself requires no knowledge of $\bar L$; instead, each of its
iterations uses the new line-search mechanism described above. Since this
line-search is central to both PF-AR and AR-L, we present it first.

\begin{algorithm*}[h]
\caption{Backtracking($\psi_s$, $\psi_n$, $\sigma$, $x$, $\bar x$, $M_0$). \label{alg:_searchL}}
	\begin{algorithmic}[1]
		\FOR{$j=0,1,\ldots,$}\label{Back1}
        \STATE Set $x^{++}:=x^{++}(2M_{j},\sigma,\bar x;x):=\argmin_{u} \inner{\nabla \psi_s(x)}{u}+\psi_n(u)+\frac{\sigma}{2}\|u-\bar x\|^2+M_{j}\|u-x\|^2$ \label{Back2}
        \vspace{0.3em}
		\STATE Set
		$ x^{+}:=x^{+}(2M_j;x):=\argmin_{u} \inner{\nabla \psi_s(x)}{u}+\psi_n(u)+M_{j}\|u-x\|^2
		$\label{Back3}
        \vspace{0.3em}
		\STATE If \label{Back4}
		\begin{align}
			&\psi_s(x^{++})-\psi_s(x) -\inner{\nabla \psi_s(x)}{x^{++}-x}\leq \frac{M_j}{2}\|x^{++}-x\|^2, \label{linesearch1}\\
            &\|\nabla \psi_s(x^{+})-\nabla \psi_s(x)\|\leq M_j\|x^{+}-x\|,\label{linesearch2}
		\end{align}
		then {\bf terminate} with $x^{++}$, $x^{+}$, and $M=M_j$. Otherwise, set $M_{j+1} = 2M_{j}$. 
		\ENDFOR \label{Back5}
        \OUTPUT triple $(x^{++},x^{+},M)$.\label{BackOutput}
	\end{algorithmic}
\end{algorithm*}

As discussed above, AR-L uses \Cref{alg:_searchL} as a subroutine and does
not require prior knowledge of $\bar L$. It is an inexact proximal-point
scheme that approximately minimizes a sequence of proximal subproblems. More
specifically, as in \cite{AL27Lan}, AR-L computes a sequence of outer
iterates $x_s$ satisfying, approximately,
\begin{equation}\label{PhiMin}
x_s \approx
\argmin_u
\left\{
\phi_s(u):=\psi_s(u)+\psi_n(u)+\frac{\sigma_s}{2}\|u-\bar x_s\|^2
\right\},
\end{equation}
by using $x_{s-1}$ as an initial point and $N_s$ inner iterations.

Following \cite{AL27Lan}, we assume the availability of an accelerated
algorithm $\mathcal A$ for approximately solving these proximal subproblems.
Many accelerated methods in the literature satisfy the required property such as the R-FISTA method presented in \Cref{Restarted FISTA} and the method in \cite{AL58Nesterov}. We state it formally in the next assumption.

\begin{assump}\label{Assumption1}
    Algorithm $\mathcal A$ used to compute $x_{s}=\mathcal A(\psi_s,\psi_n,\sigma_{s},\bar x_{s},x_{s-1})$ (see \Cref{eq:subproblem_unconstrained_searchL} in algorithm below) to approximately minimize $\phi_s$
has the following performance guarantees after the $k$-th evaluation of $\nabla \psi_s$:
\begin{equation}\label{assumptionA}
\phi_s(x_s^{k})-\phi_s(x_s^{*})\leq \frac{L_s^{k}}{k^2}\|x_{s-1}-x_{s}^{*}\|^2.
\end{equation}
Here $x_s^{k}$ is the computed approximate solution and $L_s^{k}$
is an estimate of Lipschitz constant $\bar L$ such that 
$L_s^{k}\leq c_{\mathcal A}\bar L$
where $c_{\mathcal A}$
is a universal constant that depends on algorithm $\mathcal A$.
We further assume that
subroutine $\mathcal A$ is terminated whenever
\begin{equation}\label{ATermin}
    k\geq 8\sqrt{\frac{2 L_s^{k}}{\sigma_{s}}}.
\end{equation}
When this occurs, we output
the approximate solution $x_s=x_{s}^{k}$.
\end{assump}

We now formally present the AR-L method for solving \Cref{ARoptProblem} which requires no knowledge of $\bar L$.

\begin{algorithm}[!h]
	\caption{\label{alg:ARL} AR-L$(\psi_s,\psi_n,x_0,\sigma_1, M_0)$.
}
	\begin{algorithmic}[1]
        \REQUIRE function pair $(\psi_s,\psi_n)$, initial point $x_0\in \Dn$, initial regularization parameter $\sigma_1>0$, and 
        initial Lipschitz estimate $M_0>0$.\label{ARLInput}
        \STATE $s\gets 0$ \label{ARL1}
        \STATE $\text{stopcrit } \gets \infty$ \label{ARL2}
		\WHILE{$\text{stopcrit } >0$}\label{ARL3}
        \STATE set $s\gets s+1$; \label{ARL4}
        \IF{$s=1$} \label{ARL5}
        \STATE{$\bar x_s=x_0$} \label{ARL6}
        \ELSE \label{ARL7}
        \STATE{set $\sigma_s=4\sigma_{s-1}$ and $\xb{s} =  (1-\gamma_s)\xb{s-1} + \gamma_s x_{s-1}$ with $\gamma_s=1 - \sigma_{s-1}/\sigma_s$}; \label{ARL8}
        \ENDIF \label{ARL9}
		
		\STATE compute an approximate solution $x_s\in \Dn$ of
\begin{align}\label{eq:subproblem_unconstrained_searchL}
			x_s^*:=\argmin_{x\in \Dn} 
            \left\{\phi_s(x):=\psi_s(x)+\psi_n(x) + \dfrac{\sigma_s}{2}\|x - \bar x_{s}\|^2\right\}
		\end{align}
		by running subroutine $x_s = \cA(\psi_s, \psi_n, \sigma_s, \bar x_{s}, x_{s-1})$ with initial point $x_{s-1}$;\label{ARL10}	
		\STATE set $(\xpp{s}, \xp{s}, M_s)=$ Backtracking ($\psi_s$, $\psi_n$, $\sigma_s$, $x_s$, $\bar x_s$, $M_{s-1}$); \label{ARL11}
        \STATE set $v_{s}:=2M_s\left(x_s-\xp{s}\right)-\nabla \psi_s(x_s)+\nabla \psi_s(\xp{s})$; \label{ARL12}
		\STATE set $\text{stopcrit}=M_s-\sigma_s$ \label{ARL13}
		\ENDWHILE
        \OUTPUT triple $(\hat x,\hat v,\hat M):=(\xp{s},v_s,M_s)$\label{ARLOut}.
	\end{algorithmic}
\end{algorithm}

In the analysis of AR-L, we set $\sigma_0=0$ and $\bar x_0=x_0$. The main complexity result for the AR-L method is stated in \Cref{AR-LComplex}. 
Due to the length of
its proof, the proof of the theorem is given in \Cref{ProofAR-L}.

\begin{theorem}\label{AR-LComplex}
The AR-L method outputs a triple $(\hat x,\hat v,\hat M)$ satisfying
\begin{align}
    &\hat v\in \nabla \psi_s(\hat x)+\partial \psi_n(\hat x),
    \qquad \|\hat v\|\leq 7\sigma_1D, \label{Output1}\\
    &M_0\leq \hat M\leq \max\left\{M_0,\,2\bar L\right\}. \label{Output3}
\end{align}
Moreover, if $M_0\leq  2 \bar L$, then such a triple is obtained within at most
\begin{equation}\label{CompResultARL}
    6
    +C_1\sqrt{\frac{\bar L}{\sigma_1}}
    +\log_{2}\left(\frac{2\bar L}{M_0}\right)
\end{equation}
gradient evaluations, where
\begin{equation}\label{C1Constant}
    C_1:=2\sqrt{2}+16\sqrt{2c_{\mathcal A}},
\end{equation}
and $c_{\mathcal A}$ is the universal constant in \Cref{Assumption1}.

Finally, if the input is chosen as $\sigma_1=\epsilon/(7\overline D)$ for some scalar
$\overline D>0$, then AR-L outputs $(\hat x,\hat v,\hat M)$ satisfying
\Cref{Output1} and \Cref{Output3}, with
$
    \|\hat v\|\leq (\epsilon D)/\overline D
$
within at most
$
    6
    +\sqrt{7}C_1\sqrt{\frac{\bar L\overline D}{\epsilon}}
    +\log_{2}\left(\frac{2\bar L}{M_0}\right)
$
gradient evaluations.
\end{theorem}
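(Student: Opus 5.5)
We prove the three parts of \Cref{AR-LComplex} in order: the stationarity certificate, the bounds on $\hat M$, and the complexity count. The last claim then follows by substitution.

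\textbf{Step 1: the inclusion in \Cref{Output1}.} The optimality condition of $\xp{s}=x^+(2M_s;x_s)$ gives
\[
0\in \nabla\psi_s(x_s)+\partial\psi_n(\xp{s})+2M_s(\xp{s}-x_s).
\]
Adding $\nabla\psi_s(\xp{s})-\nabla\psi_s(x_s)$ to both sides yields $v_s\in\nabla\psi_s(\xp{s})+\partial\psi_n(\xp{s})$ directly.

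\textbf{Step 2: the norm bound in \Cref{Output1}.} By \Cref{linesearch2},
\[
\|v_s\|\le 2M_s\|x_s-\xp{s}\|+M_s\|x_s-\xp{s}\|=3M_s\|x_s-\xp{s}\|\le 3M_sD,
\]
since both points lie in $\Dn$. At termination $M_s\le\sigma_s$, so this is only $3\sigma_sD$. To reach $7\sigma_1D$ we must not bound $M_s$ by $\sigma_s$ alone. Instead we compare $\xp{s}$ with $\xpp{s}$ and use the inexactness of $x_s$ for $\phi_s$:
\begin{itemize}
\item From \Cref{assumptionA} and \Cref{ATermin}, $\phi_s(x_s)-\phi_s(x_s^*)\le \frac{\sigma_s}{128}\|x_{s-1}-x_s^*\|^2$.
\item A one-step descent argument using \Cref{linesearch1} bounds $M_s\|x_s-\xpp{s}\|^2$, and hence $\|x_s-x_s^*\|$, by a small multiple of $\|x_{s-1}-x_s^*\|$.
\item Nonexpansiveness of the prox map gives $\|\xp{s}-\xpp{s}\|\le\frac{\sigma_s}{2M_s}\|x_s-\bar x_s\|$.
\end{itemize}
The AR recursion $\bar x_s=(1-\gamma_s)\bar x_{s-1}+\gamma_sx_{s-1}$, as in \cite{AL27Lan}, then controls $\sigma_s\|x_s^*-\bar x_s\|$ by the accumulated quantity $\sigma_1 D$ through a geometric sum with ratio $1/4$. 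We expect this to produce a constant of order $\sigma_1 D\sum_j 4^{-j}$-type terms, giving the stated $7\sigma_1D$. This step is the main obstacle: the telescoping of the regularization centers must be combined with the termination $M_s\le\sigma_s$ and with the inexactness of the inner solve. Our first attempt is to follow Lan et al.'s estimate $\|x_s^*-\bar x_s\|\lesssim D\sigma_1/\sigma_s$ and propagate the constants.

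\textbf{Step 3: \Cref{Output3}.} The estimate $M_s$ is nondecreasing, which gives $M_s\ge M_0$. Both line-search inequalities hold whenever $M_j\ge\bar L$, by \Cref{ineq:uppercurvature1} and the descent lemma. So doubling stops once $M_j\ge\bar L$, which forces $M_s\le\max\{M_0,2\bar L\}$.

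\textbf{Step 4: complexity.} AR-L stops at the first $s$ with $\sigma_s\ge M_s$. Since $M_s\le 2\bar L$ and $\sigma_s=4^{s-1}\sigma_1$, the number of outer iterations is $S\le 2+\log_4(2\bar L/\sigma_1)$. The inner cost of stage $s$ is at most $1+8\sqrt{2c_{\mathcal A}\bar L/\sigma_s}$ by \Cref{ATermin}. Summing the geometric series $\sum_s 2^{-(s-1)}\le 2$ gives the $16\sqrt{2c_{\mathcal A}}\sqrt{\bar L/\sigma_1}$ term. The backtracking evaluations split into two parts:
\begin{itemize}
\item each successful call costs $O(1)$ per outer iteration, and since $S\lesssim\sqrt{\bar L/\sigma_1}$-type, these are absorbed as $2\sqrt2\sqrt{\bar L/\sigma_1}$ plus the constant $6$, using $\log_4 t\le \sqrt t$;
\item the failed doublings total at most $\log_2(2\bar L/M_0)$, because $M$ is carried over between calls.
\end{itemize}

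\textbf{Final claim.} Substituting $\sigma_1=\epsilon/(7\overline D)$ gives $\|\hat v\|\le 7\sigma_1 D=\epsilon D/\overline D$. It also gives $\sqrt{\bar L/\sigma_1}=\sqrt7\sqrt{\bar L\overline D/\epsilon}$, which yields the stated gradient-evaluation bound.
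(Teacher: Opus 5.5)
Your Steps~1, 3 and~4 and the final substitution follow the paper's proof: prox optimality gives the inclusion; the doubling rule plus induction gives $M_0\le\hat M\le\max\{M_0,2\bar L\}$; and the count uses $S\le 2+\log_4^+(2\bar L/\sigma_1)$, a geometric sum of inner costs, and $2S+\log_2(2\bar L/M_0)$ backtracking evaluations. To land exactly on $C_1$ you need $3\log_4 t<2\sqrt t$, not $\log_4 t\le\sqrt t$.

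The genuine gap is Step~2, the bound $\|\hat v\|\le 7\sigma_1 D$. This is the real content of the theorem, and you only announce it (``we expect''). Two of your three ingredients are also wrong.
\begin{itemize}
\item The third bullet, $\|x_s^{+}-x_s^{++}\|\le\frac{\sigma_s}{2M_s}\|x_s-\bar x_s\|$, does not follow from nonexpansiveness, because $x_s^{+}$ and $x_s^{++}$ are prox points with different parameters ($2M_s$ versus $2M_s+\sigma_s$). It is false in general: if $\bar x_s=x_s$ the right-hand side is zero, while $x_s^{++}=x^{+}(2M_s+\sigma_s;x_s)\neq x^{+}(2M_s;x_s)$ in general.
\item The second bullet reverses the logic. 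The descent inequality from the line-search test, taken at $u=x_s^*$, gives $\|x_s^{++}-x_s\|\le\sqrt2\,\|x_s^*-x_s\|$. It is your first bullet (which is correct) plus $\sigma_s$-strong convexity of $\phi_s$ that controls $\|x_s-x_s^*\|$, namely $\|x_s-x_s^*\|\le\frac18\|x_{s-1}-x_s^*\|$.
\end{itemize}

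Here is the chain you are missing. Start from your correct reduction $\|v_s\|\le 3M_s\|x_s-x_s^{+}\|=\tfrac32\|G_{2M_s}(x_s)\|$; the paper uses the cruder factor $2$.
\begin{itemize}
\item Compare $x_s^{++}$ with $x^{+}(2M_s+\sigma_s;x_s)$, which has the same prox parameter. Nonexpansiveness then gives $\|x^{+}(2M_s+\sigma_s;x_s)-x_s^{++}\|\le\frac{\sigma_s}{2M_s+\sigma_s}\|x_s-\bar x_s\|$.
\item Use the monotonicity $\|G_{2M_s}(x_s)\|\le\|G_{2M_s+\sigma_s}(x_s)\|$. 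Together with the $\sqrt2$ bound above this yields $\|G_{2M_s}(x_s)\|\le(3\sigma_s+4M_s)\|x_s^*-x_s\|+\sigma_s\|x_s^*-\bar x_s\|$.
\item You need the fact $\|x_{s-1}-x_s^*\|\le\|x_{s-1}-x_{s-1}^*\|$ from \cite{AL27Lan}. It upgrades your one-stage estimate to the compounded contraction $\|x_s-x_s^*\|\le 8^{-s}\|x_0-x_0^*\|\le 8^{-s}D$, where $x_0^*$ minimizes $\psi$ since $\sigma_0=0$.
\item You also need the second fact from \cite{AL27Lan}: $\sigma_s\|\bar x_s-x_s^*\|\le\sum_{i=1}^{s}(\sigma_{i-1}+\sigma_i)\|x_{i-1}^*-x_{i-1}\|$. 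Since $\sigma_i=4^{i-1}\sigma_1$ grows by $4$ while the errors shrink by $8$, this sum is geometric with ratio $\tfrac12$ (not $\tfrac14$) and is at most $\tfrac94\sigma_1 D$. The constant $8$ in the inner stopping rule $k\ge 8\sqrt{2L_s^k/\sigma_s}$ is exactly what beats the factor $4$ in $\sigma_s$; with contraction $\tfrac14$ the sum would grow linearly in $S$.
\item The outer stopping rule enters only in the term $(3\sigma_S+4M_S)\|x_S^*-x_S\|$. Since $M_S\le\sigma_S$, this term is at most $7\cdot4^{S-1}8^{-S}\sigma_1 D\le\tfrac78\sigma_1 D$.
\end{itemize}
Altogether $\|v_S\|\le 2(\tfrac78+\tfrac94)\sigma_1 D<7\sigma_1 D$, and your factor $\tfrac32$ only helps.
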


\begin{remark}
The assumption that $M_0\leq  2 \bar L$ is not strictly necessary to establish a complexity result similar to \Cref{CompResultARL}. It is imposed only to simplify the analysis and the resulting complexity bound of the AR-L method. It is easy to construct an initial estimate $M_0$ satisfying this assumption. Indeed, if $\psi_s$ is not affine and we choose $w_0\neq x_0$ with $\nabla \psi_s(x_0)\neq \nabla \psi_s(w_0)$, where $x_0\in \Dn$ is the initial point, and define
$
M_0:=\|\nabla \psi_s(x_0)-\nabla \psi_s(w_0)\|/(4\|x_0-w_0\|),
$
then it follows from the definition of $\bar L$ that $M_0\leq  2 \bar L$. In the affine case, since any positive scalar $\bar L$ is a valid
smoothness constant for $\psi_s$, we may choose any $M_0>0$ and then
take $\bar L$ large enough so that $M_0\leq 2\bar L$.
\end{remark}

As follows from \Cref{AR-LComplex}, the AR-L method requires the knowledge of $D$ in order to compute a point whose subgradient norm is at most $\epsilon$. 

We next present the PF-AR method, which requires no prior knowledge of either $\bar L$ or $D$ to compute a point whose subgradient norm is at most $\epsilon$. That is, given functions $\psi$, $\psi_s$, and $\psi_n$ as in \Cref{ARoptProblem}, and a tolerance parameter $\epsilon > 0$, the goal of
PF-AR is to find a pair $(\hat x, \hat v) \in \Dn \times \E$ such that
\begin{gather}\label{PFARProb}
\|\hat v\| \le \epsilon, \quad \hat v \in \nabla \psi_s(\hat x) + \partial \psi_n(\hat x).
\end{gather}
The PF-AR method repeatedly invokes AR-L with different guesses for $D$ and is guaranteed to compute a pair $(\hat x,\hat v)$ in at most $\mathcal O(1/\epsilon)$ gradient evaluations. The method is now presented in \Cref{alg:PF-AR} below.

\begin{algorithm}[!h]
	\caption{\label{alg:PF-AR} PF-AR Method}
	\begin{algorithmic}[1]
\REQUIRE function pair $(\psi_s,\psi_n)$, initial point $x_0\in \Dn$,
initial smoothness estimate $M_0>0$, initial diameter $D_0>0$ estimate, and tolerance $\epsilon>0$.\label{PFARInput}
\STATE $t\gets 0$\label{PFAR1}
\STATE $\text{stopcrit } \gets \infty$\label{PFAR2}
\WHILE{$\text{stopcrit }>\epsilon$}\label{PFAR3}
\STATE set $t\gets t+1$;\label{PFAR4}
		\STATE set $D_t=4D_{t-1}$\label{PFDiamstep};
		\STATE set $(v_t,x_t, M_t)=\ $AR-L$(\psi_s,\psi_n, x_{t-1}, \epsilon/(7D_t), M_{t-1})$ (see \Cref{alg:ARL}) \label{ARLcallstep};
		\STATE set $\text{stopcrit } \gets \|v_t\|$\label{PFARtermstep};
\ENDWHILE\label{PFAR8}
\OUTPUT pair $(\hat x,\hat v)=(x_t,v_t)$ satisfying \Cref{PFARProb}.\label{PFAROutput}
	\end{algorithmic}
\end{algorithm}

The main complexity result of the PF-AR method is stated in the following theorem.
\begin{theorem}\label{PFARComplexity}
Suppose that inputs $M_0$ and $D_0$ satisfy
$M_0\leq 2\bar L$ and $D_0\leq D$. The PF-AR method then computes a pair $(\hat x,\hat v)$ such that 
    \begin{align}
    &\hat v\in \nabla \psi_s(\hat x)+\partial \psi_n(\hat x), \quad \|\hat v\|\leq \epsilon\label{OutputPF-AR}
   \end{align}
within at most
\begin{equation}\label{ComplexityPFAR}
 \left[6+\log_2(2\bar L/M_0)\right]\left[1+\left \lceil \log_{4}\left(\frac{D}{D_0} \right)\right \rceil\right]+8\sqrt{7}C_1\sqrt{\frac{\bar LD}{\epsilon}}
\end{equation}
gradient evaluations where $C_1$ is a universal constant as in \Cref{C1Constant}.
Hence, the PF-AR method achieves the optimal gradient evaluation complexity bound of 
$\mathcal O\left(\sqrt{\bar LD/\epsilon}\right)$
for finding a pair $(\hat x,\hat v)$ satisfying \Cref{OutputPF-AR}.
\end{theorem}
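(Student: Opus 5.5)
The proof will be a doubling-style argument built directly on \Cref{AR-LComplex}, applied to each call of AR-L in \Cref{ARLcallstep} of \Cref{alg:PF-AR}.

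\textbf{Correctness of every call.} At outer iteration $t$, AR-L is invoked with $\sigma_1=\epsilon/(7D_t)$, where $D_t=4^tD_0$. By \Cref{AR-LComplex}, it returns $v_t\in\nabla\psi_s(x_t)+\partial\psi_n(x_t)$ with $\|v_t\|\le \epsilon D/D_t$. The inclusion in \Cref{OutputPF-AR} therefore holds at every $t$, and the loop stops at the first $t$ with $\|v_t\|\le\epsilon$.

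\textbf{Bounded number of outer iterations.} Since $\|v_t\|\le\epsilon$ as soon as $D_t\ge D$, PF-AR terminates no later than the first $t$ with $4^tD_0\ge D$. Call this index $T$. Then $T\le 1+\lceil\log_4(D/D_0)\rceil$, and $T\ge1$ because $D_0\le D$. Stopping earlier only helps.

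\textbf{Lipschitz estimates stay controlled.} The estimate passed to call $t$ is $M_{t-1}$. By \Cref{Output3}, $M_0\le M_t\le\max\{M_{t-1},2\bar L\}$, so by induction $M_0\le M_{t-1}\le 2\bar L$ for every $t$. The hypothesis $M_{t-1}\le 2\bar L$ of the complexity part of \Cref{AR-LComplex} thus holds at each call. Call $t$ therefore costs at most
\[
6+\sqrt7C_1\sqrt{\bar L D_t/\epsilon}+\log_2(2\bar L/M_{t-1})
\;\le\;
6+\log_2(2\bar L/M_0)+\sqrt7C_1\sqrt{\bar L D_t/\epsilon}
\]
gradient evaluations, using $M_{t-1}\ge M_0$.

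\textbf{Summing over calls.} Summing the first two terms over at most $1+\lceil\log_4(D/D_0)\rceil$ calls gives the first product in \Cref{ComplexityPFAR}. For the square-root terms, $\sqrt{D_t}=2^t\sqrt{D_0}$ is geometric, so
\[
\sum_{t\le T}\sqrt{D_t}\le 2\sqrt{D_T}.
\]
Because $T$ is minimal, $D_{T-1}<D$, which gives $D_T=4D_{T-1}<4D$ and hence $\sqrt{D_T}<2\sqrt D$. If $T=1$, then $D_1=4D_0\le4D$ and the same bound holds. The geometric sum is thus at most $4\sqrt{D}$, which is within the constant $8\sqrt7C_1$ in \Cref{ComplexityPFAR}, leaving slack of a factor $2$.

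\textbf{Expected obstacle.} The only delicate point is the bookkeeping: the warm-started $M_{t-1}$ must stay in $[M_0,2\bar L]$ so that the logarithmic term is uniformly bounded, and $D_T$ must be related to $D$ through the minimality of $T$. The optimal $\mathcal O(\sqrt{\bar LD/\epsilon})$ rate then follows directly.
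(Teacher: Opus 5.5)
Your proposal is correct and follows the paper's proof essentially step by step. Both use the per-call guarantee $\|v_t\|\le \epsilon D/D_t$ from \Cref{AR-LComplex}, the bound of at most $1+\lceil\log_4(D/D_0)\rceil$ outer iterations, the induction $M_0\le M_t\le 2\bar L$ that makes the logarithmic term uniform, and a geometric sum over $\sqrt{D_t}=2^t\sqrt{D_0}$. The only difference is in the final bookkeeping: the paper plugs $T=1+\lceil\log_4(D/D_0)\rceil$ in directly to get $2^{T+1}\sqrt{D_0}\le 8\sqrt{D}$, whereas your use of the minimality of the stopping index gives the slightly sharper $4\sqrt{D}$.
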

\begin{proof}
It follows from step~\ref{ARLcallstep} of the PF-AR Method that
during the $t$-th iteration, PF-AR calls the AR-L method 
with inputs 
$(\psi_s,\psi_n,x_{t-1},\epsilon/(7D_t),M_{t-1})$
and outputs a triple
$(v_t,x_t, M_t)$. It then follows from 
\Cref{Output1} and \Cref{Output3}
that the triple $(v_t,x_t,M_t)$
satisfies
\begin{align}
    &v_t\in \nabla \psi_s(x_t)+\partial \psi_n(x_t),
    \qquad \|v_t\|\leq \frac{\epsilon D}{D_t}, \label{OutputARL1}\\
    &M_{t-1}\leq M_t\leq \max\left\{M_{t-1},\,2\bar L\right\}. \label{OutputARL2}
\end{align}
Let 
\begin{equation}\label{OuterComplexityPFAR}
T:=1+\left \lceil \log_{4}\left(\frac{D}{D_0} \right)\right \rceil.
\end{equation}
It then 
follows from 
the second bound in \Cref{OutputARL1}, the bound $D_0\leq D$, the fact that step~\ref{PFDiamstep} implies that $D_t=4^{t}D_0$, the way stopcrit is set in step~\ref{PFARtermstep}, and the condition of the while loop that PF-AR performs at most $T$ outer iterations where $T$ is as in \Cref{OuterComplexityPFAR}. 

To show the total gradient evaluation complexity result,
we first show that for all $t\geq 0$
\begin{equation}\label{MBound}
M_{0}\leq M_t\leq 2\bar L.
\end{equation}
We proceed by induction. It follows from the assumption
$M_0\leq \bar 2L$ of \Cref{PFARComplexity} that both inequalities
in \Cref{MBound}
must hold for $t=0$.
Let $t \geq 1$
and assume now that
$M_0\leq M_{t-1}\leq 2\bar L$.
It then follows from the second inequality in
\Cref{OutputARL2} and the induction hypothesis
that 
\[M_{t}\overset{\Cref{OutputARL2}}{\leq} \max\left\{M_{t-1}, 2\bar L\right\}\leq 2\bar L.\]
Moreover, it follows from the first inequality in 
\Cref{OutputARL2} and the induction hypothesis
that 
\[M_0\leq M_{t-1}\leq M_{t}.\]
Hence, it follows by induction that \Cref{MBound}
holds for all $t \geq 0$.

It then follows from \Cref{MBound},
the fact that PF-AR calls the AR-L method 
with inputs 
as in step~\ref{ARLcallstep}, and
the complexity bound in \Cref{CompResultARL}
that the call made to the AR-L 
method in the $t$-th
iteration 
performs 
at most 
\begin{equation}
 6+C_1\sqrt{\frac{7\bar LD_t}{\epsilon}}
    +\log_{2}\left(\frac{2\bar L}{M_{t-1}}\right)\overset{\Cref{MBound}}{\leq} 6+C_1\sqrt{\frac{7\bar L D_t}{\epsilon}}
    +\log_{2}\left(\frac{2\bar L}{M_{0}}\right)
\end{equation}
gradient evaluations.
The total number of gradient evaluations is then bounded by
\begin{align*}
&\left[6+\log_2(2\bar L/M_0)\right]T+C_1\sum_{t=1}^{T}\sqrt{\frac{7\bar L D_t}{\epsilon}}= \left[6+\log_2(2\bar L/M_0)\right]T
+C_1\sqrt{\frac{7\bar L}{\epsilon}}\sum_{t=1}^{T}\sqrt{4^{t}D_0}\\
&\leq 
\left[6+\log_2(2\bar L/M_0)\right]T+
C_1\sqrt{\frac{7\bar L}{\epsilon}}\sqrt{D_0}2^{T+1}\overset{\Cref{OuterComplexityPFAR}}{\leq} 
\left[6+\log_2(2\bar L/M_0)\right]T+
8C_1\sqrt{7}\sqrt{\frac{\bar LD}{\epsilon}}
\end{align*}
which implies the complexity result in 
\Cref{ComplexityPFAR} in view of the definition of $T$ in \Cref{OuterComplexityPFAR}.
The last conclusion of \Cref{PFARComplexity} is immediate from \Cref{ComplexityPFAR}.
\end{proof}

\begin{remark}\label{remarkPFAR}
The assumptions in \Cref{PFARComplexity} that the inputs $M_0$ and $D_0$ satisfy
$M_0\leq 2\bar L$ and $D_0\leq D$ are not strictly necessary for establishing
PF-AR's complexity result. However, they simplify the analysis and the resulting
complexity bound. Moreover, both conditions can be easily satisfied. As mentioned
in the remarks following the AR-L method, it is easy to construct an $M_0$ such
that $M_0\leq 2\bar L$. To construct a $D_0$ satisfying $D_0\leq D$, select
$w_0\neq x_0$ with $w_0\in\Dn$ and define
$
D_0:=\|x_0-w_0\|.
$
It then follows from the definition of the diameter $D$ of $\Dn$ that
$D_0\leq D$.
\end{remark}

\subsection{Best Iterate Convergence of an Optimal Augmented
Lagrangian Method}\label{OIALMethod}

In this subsection, we present an  \textbf{inexact augmented Lagrangian
method}, referred to as the \textbf{optimal inexact augmented Lagrangian
(O‑IAL) method}, for solving the linearly constrained convex problem
presented in \Cref{LinearConstraints}.
The objective of O-IAL is to compute a primal-dual pair that 
satisfies the approximate 
primal-dual optimality conditions
in \Cref{ALsolutiontype}, while achieving optimal first-order complexity bounds.

O-IAL can be viewed as an inexact AL method that maintains
approximate dual stationarity at each iteration.
Given a primal-dual pair ($z_{k-1},p_{k-1})$, an
O-IAL iteration uses the PF-AR method 
described in 
\Cref{PF-AR}
to compute its next primal iterate, $z_k$, by approximately minimizing
the convex AL subproblem 
\begin{equation}\label{eq:approx_primal_update}
\min_{z} {\cal L}_{c}(z;p_{k-1}),
\end{equation}
with accuracy less than or equal to $\hat \rho>0$.
O-IAL thus maintains approximate dual stationarity, as specified by the dual residual condition 
$\|r_k\|\leq \hat \rho$ in \cref{ALsolutiontype}.
After approximately minimizing \Cref{eq:approx_primal_update},
O-IAL then performs the dual update
according to \Cref{eq:dual_update}.

\Cref{alg:O-IAL} now formally presents the \textdef{O-IAL method}. 
\begin{algorithm}[H]
\caption{O-IAL Method}
\label{alg:O-IAL}
\begin{algorithmic}[1]
\REQUIRE 
function pair $(\psi_s,\psi_n)$, initial points $z_0 \in \Dn$ and
$p_0\in \R^m$, primal-dual tolerance pair
$(\hat \epsilon,\hat \rho) \in \R_{++}^2$, penalty parameter $c>0$,
PF-AR tolerance
$
    \epsilon=\min\left\{\frac{c\hat\epsilon^2}{4D},\hat\rho\right\},
$
and smoothness and diameter estimates
$M_0:=\|\nabla \psi_s(z_0)-\nabla \psi_s(w_0)\|/(4\|z_0-w_0\|)$
and $D_0:=\|z_0-w_0\|$, where $w_0\in \Dn$ is chosen as in
\Cref{remarkPFAR}.
\label{InputOIAL}
\STATE $k \gets 0$\label{OIAL1}
\STATE call the PF-AR method (\Cref{alg:PF-AR}) with function pair
$(\cL^s_c(\cdot,p_k),\psi_n)$, initial point $z_k$,\\
\qquad estimates $M_0$ and $D_0$, and tolerance $\epsilon$; and let
$(z_{k+1},r_{k+1})$ be its output;\label{OIAL2}
\STATE set:  $Res_{k+1} \gets \mathcal Az_{_{k+1}}-b$;\label{OIAL3}
\STATE set: $p_{{k+1}}=p_{k}+c Res_{k+1}$;\label{OIAL4}
\STATE update: $k \gets k+1$;\label{OIAL5}
\WHILE{$\|\Resk\| >  \hat \epsilon$}\label{OIAL6}
\STATE call the PF-AR method (\Cref{alg:PF-AR}) with function pair
$(\cL^s_c(\cdot,p_k),\psi_n)$, initial point $z_k$,\\
\qquad estimates $M_0$ and $D_0$, and tolerance $\epsilon$; and let
$(z_{k+1},r_{k+1})$ be its output; \label{OIAL7}
\STATE set:  $Res_{k+1} \gets \mathcal Az_{_{k+1}}-b$;\label{OIAL8}
\STATE set:  $p_{{k+1}}=p_{k}+c Res_{k+1}$;\label{OIAL9}
\STATE update: $k \gets k+1$;\label{OIAL10}
\ENDWHILE\label{OIAL11}
\OUTPUT triple $(z,p,r):=(z_k,p_{k},r_k)$ satisfying
\cref{ALsolutiontype}.\label{OIALOutput}
\end{algorithmic}
\end{algorithm}
\index{tolerances!O-IAL $\epsilon=\min\left\{\frac{c\hat
\epsilon^2}{4D},\hat\rho\right\}$}

It will be shown in \Cref{ARtranslation}(b) of the next subsection
that every call to the PF-AR method produces a pair $(z_k,r_k)$ satisfying
the approximate dual stationarity conditions in
\cref{ALsolutiontype}. Consequently,
when the while loop terminates, the output triple
$(z_k,p_k,r_k)$ satisfies all conditions in \cref{ALsolutiontype}.

\Cref{Main Complexity O-IAL} provides the main complexity result for O-IAL. The
proof of the theorem is given in a collection of propositions and
lemmas in \Cref{Proof of Theorem O-IAL}.

\begin{theorem}\label{Main Complexity O-IAL}
Suppose that \Cref{ass:A1} holds. Recall that $\Pi(\cdot)$ is as in \Cref{DualDomainNew}, $\bar L$ is the smoothness parameter of $\psi_s$, and $D$ is the diameter of $\Dn$. 
The following statements then hold.
    \begin{enumerate}[label=(\alph*)]
        \item The O-IAL method requires at most  \begin{equation}\label{Complexity a}
 \mathcal O\left( \frac{\Pi(p_0)}{\hat
\epsilon^2c^2}   \sqrt{\left(\bar L+c\|\cA\|^2\right) \max\left\{\frac{D^2}{c\hat \epsilon^2}, \frac{D}{\hat \rho} \right\}} \right)
        \end{equation}

resolvent/gradient evaluations to find a $(\hat \epsilon,\hat \rho)$-approximate
solution of \cref{LinearConstraints}, i.e., a triple $(z,p,r)$ satisfying
\cref{ALsolutiontype}.
\item By choosing the penalty parameter $c$ as $c=\mathcal O(1/\hat \epsilon)$, the above complexity bound is reduced to the optimal gradient evaluation complexity bound
\begin{equation}\label{Complexity b}
\mathcal O\left(\Pi(p_0)  \sqrt{\left(\bar L+ \frac{\|\cA\|^2}{\hat \epsilon} \right) \max\left\{\frac{D^2}{\hat \epsilon}, \frac{D}{\hat \rho} \right\}} \right).
 \end{equation}
Hence, if $\hat\epsilon=\hat\rho$, it follows that the O-IAL method requires at most $\mathcal
O\left(1/\hat \epsilon\right)$ total gradient evaluations to find an $(\hat \epsilon,\hat \epsilon)$-approximate solution of \cref{LinearConstraints}.
    \end{enumerate}
\end{theorem}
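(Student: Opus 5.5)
We split the argument into two bounds, multiplied at the end: a bound on the number of outer O-IAL iterations, and a bound on the cost of each PF-AR call.

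\textbf{Cost of one PF-AR call.} Each call minimizes $\cL_c(\cdot,p_k)$. Its smooth part $\cL^s_c(\cdot,p_k)$ is convex and $(\bar L+c\|\cA\|^2)$-smooth, and its nonsmooth part $\psi_n$ has domain $\Dn$ of diameter $D$.

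The inputs satisfy the hypotheses of \Cref{PFARComplexity}. Indeed, $D_0=\|z_0-w_0\|\le D$. Also, $M_0$ is built from $\nabla\psi_s$, which is at most a quarter of the smoothness constant of $\psi_s$; hence $M_0$ is at most $2(\bar L+c\|\cA\|^2)$.

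So \Cref{PFARComplexity} gives, per call, $\mathcal O\big(\sqrt{(\bar L+c\|\cA\|^2)D/\epsilon}\big)$ gradient evaluations, up to additive logarithmic terms. These terms do not depend on $\hat\epsilon$ in the dominant way; one could absorb them or note them as lower order. With $\epsilon=\min\{c\hat\epsilon^2/(4D),\hat\rho\}$ we have $D/\epsilon=\max\{4D^2/(c\hat\epsilon^2),D/\hat\rho\}$, which is the square-root factor in \eqref{Complexity a}.

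PF-AR also returns $r_{k+1}\in\nabla\psi_s(z_{k+1})+\partial\psi_n(z_{k+1})+c\cA^*(\cA z_{k+1}-b)+\cA^*p_k$. Since $p_{k+1}=p_k+c(\cA z_{k+1}-b)$, this equals $\nabla\psi_s(z_{k+1})+\partial\psi_n(z_{k+1})+\cA^*p_{k+1}$, with $\|r_{k+1}\|\le\epsilon\le\hat\rho$. So dual stationarity holds at every iterate, and the output is valid once the residual test passes.

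\textbf{Outer iteration count.} This is the core step. Fix $p_*\in\mathcal P_\star$ with $\|p_0-p_*\|^2=\Pi(p_0)$.

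First, use $r_{k+1}$ as an approximate subgradient of the convex function $\cL(\cdot,p_{k+1})$ at $z_{k+1}$. Comparing with $z_*$ gives
\[
\cL(z_{k+1},p_{k+1})-\cL(z_*,p_{k+1})\le\langle r_{k+1},z_{k+1}-z_*\rangle\le \epsilon D.
\]
Second, $\cL(z_*,p_{k+1})=\psi(z_*)\le\cL(z_{k+1},p_*)$ by optimality of $(z_*,p_*)$. Combining the two gives
\[
\langle p_{k+1}-p_*,\cA z_{k+1}-b\rangle\le\epsilon D.
\]
Third, with $Res_{k+1}=(p_{k+1}-p_k)/c$, the identity
\[
2\langle p_{k+1}-p_*,p_{k+1}-p_k\rangle=\|p_{k+1}-p_*\|^2-\|p_k-p_*\|^2+\|p_{k+1}-p_k\|^2
\]
yields
\[
\|p_{k+1}-p_*\|^2+c^2\|Res_{k+1}\|^2\le\|p_k-p_*\|^2+2c\epsilon D.
\]
Because $\epsilon\le c\hat\epsilon^2/(4D)$, we have $2c\epsilon D\le c^2\hat\epsilon^2/2$. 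While the loop runs, $\|Res_{k+1}\|>\hat\epsilon$, so each iteration decreases $\|p_k-p_*\|^2$ by at least $c^2\hat\epsilon^2/2$. Telescoping, the number of outer iterations is at most $1+2\Pi(p_0)/(c^2\hat\epsilon^2)$. This is a nonergodic best (indeed first-hit) iterate bound.

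\textbf{Assembling the bounds.} Multiplying the iteration count by the per-call cost gives (a); the additive constant $1$ is absorbed in the $\mathcal O$, or handled by assuming $\Pi(p_0)\gtrsim c^2\hat\epsilon^2$. For (b), substitute $c=\Theta(1/\hat\epsilon)$. Then $c^2\hat\epsilon^2=\Theta(1)$, $c\|\cA\|^2=\Theta(\|\cA\|^2/\hat\epsilon)$ and $D^2/(c\hat\epsilon^2)=\Theta(D^2/\hat\epsilon)$. With $\hat\rho=\hat\epsilon$ this gives $\sqrt{(1/\hat\epsilon)(1/\hat\epsilon)}=1/\hat\epsilon$.

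\textbf{Main obstacle.} The delicate step is the dual-distance inequality. It must use the approximate subgradient at the updated multiplier $p_{k+1}$, rather than $p_k$; this is what makes the telescoping exact and gives a per-iteration error of $\mathcal O(c\epsilon D)$ instead of a growing one. The other point needing care is the bookkeeping of the logarithmic terms in PF-AR's bound. In particular, they involve $\log(1/\hat\epsilon)$ through $D/D_0$ and $M_0$ only mildly, and should be shown to be dominated.
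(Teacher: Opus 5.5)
Your proposal is correct and follows essentially the paper's route. It bounds each PF-AR call via the $(\bar L+c\|\cA\|^2)$-smoothness of $\cL^s_c(\cdot,p_k)$, and uses the key estimate $\langle p_{k+1}-p_*,\cA z_{k+1}-b\rangle\le D\|r_{k+1}\|$ at the updated multiplier; the paper gets this by adding two subgradient inequalities, which is exactly your Lagrangian-value comparison. It then telescopes $\|p_k-p_*\|^2$ to obtain $\mathcal O(\Pi(p_0)/(c^2\hat\epsilon^2))$ outer iterations and multiplies by the per-call cost. Your explicit handling of the additive $1$ in the outer count and your check that $M_0\le 2(\bar L+c\|\cA\|^2)$ are details the paper leaves implicit.
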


\subsubsection{Proof of \texorpdfstring{\Cref{Main Complexity O-IAL}}{Lg}}\label{Proof of Theorem O-IAL}
This subsection proves \Cref{Main Complexity O-IAL}.
We begin with the following proposition, which characterizes the
smoothness properties of the augmented Lagrangian subproblem and the
behavior of the PF-AR subroutine
used in step~\ref{OIAL7} of O-IAL.

\begin{prop}\label{ARtranslation}
   The following statements about the $(k-1)$-st iteration of O-IAL hold:
    \begin{enumerate}[label=(\alph*)]
       \item 
\label{item:propFISTA}
the function $\mathcal L^s_{c}(\cdot,p_{k-1})$ is
convex and $(\bar L+c\|\cA\|^2)$-smooth;
        \item the call made to the PF-AR method in step~\ref{OIAL7} outputs a pair $(z_k,r_k)$ that satisfies the following relations
        \begin{equation}\label{inclusion Lagrangian}
	 r_{k}\in \nabla \psi_s(z_k)+\partial\psi_n(z_k)+\cA^{*}p_{k}, \quad \|r_k\|\leq \min\left\{\frac{c\hat \epsilon^2}{4D},\hat \rho\right\},
        \end{equation}
        within at most 
        \begin{equation}\label{OIALInComplexity}
        \mathcal O\left(\sqrt{\left(\bar L+c\|\cA\|^2\right) \max\left\{\frac{D^2}{c\hat \epsilon^2}, \frac{D}{\hat \rho} \right\}}\right)
        \end{equation}
        gradient evaluations.
\end{enumerate}

\end{prop}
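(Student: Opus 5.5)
For part (a), I will differentiate $\cL^s_c(\cdot,p_{k-1})$ from \Cref{smooth part}:
\[
\nabla_z\cL^s_c(z,p_{k-1})=\nabla\psi_s(z)+\cA^*p_{k-1}+c\,\cA^*(\cA z-b).
\]
Convexity follows because $\psi_s$ is convex, the term $\langle p_{k-1},\cA z-b\rangle$ is affine, and $\frac c2\|\cA z-b\|^2$ is a convex quadratic. For smoothness, the affine term has constant gradient. The quadratic term has gradient difference $c\cA^*\cA(x-y)$, whose norm is at most $c\|\cA\|^2\|x-y\|$. Combining this with \Cref{ineq:uppercurvature1} through the triangle inequality gives the constant $\bar L+c\|\cA\|^2$.

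For part (b), I will apply \Cref{PFARComplexity} to the pair $(\cL^s_c(\cdot,p_{k-1}),\psi_n)$. By (a), the smoothness constant to use there is $L_c:=\bar L+c\|\cA\|^2$, and the tolerance is $\epsilon=\min\{c\hat\epsilon^2/(4D),\hat\rho\}$. First I must check the hypotheses $M_0\le 2L_c$ and $D_0\le D$. The second holds by \Cref{remarkPFAR}. For the first, $M_0$ is built from $\psi_s$ rather than from $\cL^s_c$, which differs slightly from the remark. Since $\|\nabla\psi_s(z_0)-\nabla\psi_s(w_0)\|\le\bar L\|z_0-w_0\|$, we get $M_0\le\bar L/4\le 2L_c$, so the hypothesis still holds. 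If $\psi_s$ happens to be affine with $z_0=w_0$ issues, I will invoke the convention from the remark after \Cref{AR-LComplex}.

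\Cref{PFARComplexity} then yields $(z_k,r_k)$ with
\[
r_k\in\nabla\psi_s(z_k)+\cA^*p_{k-1}+c\cA^*(\cA z_k-b)+\partial\psi_n(z_k),\qquad \|r_k\|\le\epsilon.
\]
The dual update in step~\ref{OIAL9} gives $p_k=p_{k-1}+c(\cA z_k-b)$, so
\[
\cA^*p_{k-1}+c\cA^*(\cA z_k-b)=\cA^*p_k,
\]
which is exactly the inclusion \Cref{inclusion Lagrangian}. This identification of the subgradient with the updated multiplier is the key structural step. The remaining work is bookkeeping.

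For the complexity, \Cref{ComplexityPFAR} gives the count
\[
[6+\log_2(2L_c/M_0)]\,[1+\lceil\log_4(D/D_0)\rceil]+8\sqrt7C_1\sqrt{L_cD/\epsilon}.
\]
Since $1/\epsilon=\max\{4D/(c\hat\epsilon^2),1/\hat\rho\}$, the dominant term is
\[
\sqrt{L_c\,D/\epsilon}=\mathcal O\!\left(\sqrt{L_c\max\{D^2/(c\hat\epsilon^2),\,D/\hat\rho\}}\right).
\]
The logarithmic terms are treated as lower order and absorbed in the $\mathcal O$. They depend only on the instance constants $M_0,D_0,\bar L,\|\cA\|,c$ and not on $\hat\epsilon,\hat\rho$, apart from $c$ inside a logarithm.

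The main obstacle is justifying this absorption rigorously. The claimed bound hides the additive term $\log_2(2L_c/M_0)\cdot\log_4(D/D_0)$, which I will argue is dominated by, or treated as a constant relative to, the square-root term.
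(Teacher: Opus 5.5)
Your proposal is correct and follows essentially the same route as the paper: part (a) via the triangle-inequality bound on the gradient difference, and part (b) via the identity $\nabla\psi_s(z_k)+\cA^*(p_{k-1}+c(\cA z_k-b))=\nabla\psi_s(z_k)+\cA^*p_k$, combined with the output guarantee of PF-AR and the $\mathcal O(\sqrt{\bar L D/\epsilon})$ conclusion of \Cref{PFARComplexity} applied with smoothness constant $\bar L+c\|\cA\|^2$. The paper absorbs the logarithmic terms exactly as you propose, simply by citing that $\mathcal O$ conclusion, so this is not a real obstacle; your check $M_0\le \bar L/4\le 2(\bar L+c\|\cA\|^2)$ verifies a hypothesis that the paper leaves implicit.
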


\begin{proof}
We first show part a). The fact that $\mathcal L^s_{c}(\cdot;p_{k-1})$ is convex
is immediate from its definition in \cref{smooth part} and the fact that
$\psi_s$ is convex. To show the second result, let $z$ and $z'$ be in
$\mathbb E$. The definition of $\mathcal L_{c}^{s}(\cdot;p_{k-1})$ in
\cref{smooth part}, the fact that $\psi_s$ is $\bar L$-smooth, and the
triangle inequality, then imply that
 \begin{align*}
     \|\nabla\mathcal L^s_{c}(z;p_{k-1})-\nabla\mathcal
L^s_{c}(z';p_{k-1})\|
     &\leq \|\nabla \psi_s(z)-\nabla
\psi_s(z')\|+ 
	\|c\cA^{*}[(\cA z-b)-(\cA z'-b)]\|\\
     &\leq \bar L\|z-z'\|+\|c\cA^{*}\cA(z-z')\|\\
     &\leq (\bar L+c\|\cA\|^2)\|z-z'\|.
 \end{align*}

(b) 
It is easy to see from the definition of $\mathcal L_c^{s}(z_k;p_{k-1})$ in \cref{smooth part} and the update formula for $p_k$ in step~\ref{OIAL9} that
\begin{align}\label{gradient formula}
\nabla \mathcal L_c^{s}(z_k;p_{k-1})=\nabla
\psi_s(z_k)+\cA^{*}(p_{k-1}+c(\cA z_k-b))=\nabla \psi_s(z_k)+\mathcal A^{*}p_k.
\end{align}
It then follows from this observation,
the fact that O-IAL calls PF-AR
in its step \ref{OIAL7}
with function pair $(\mathcal L_{c}^{s}(\cdot,p_k),\psi_n)$ and tolerance $\epsilon=\min\left\{(c\hat \epsilon^2)/4D,\hat \rho\right\}$,
and the inclusion in \Cref{OutputPF-AR}
that PF-AR outputs a pair $(z_k,r_k)$
satisfying
\[
 r_{k}\in \nabla \psi_s(z_k)+\partial\psi_n(z_k)+\cA^{*}p_{k}, \quad \|r_k\|\leq \min\left\{\frac{c\hat \epsilon^2}{4D},\hat \rho\right\},
\]
which implies 
that \Cref{inclusion Lagrangian} holds.

The gradient evaluation complexity bound of the PF-AR method call in \Cref{OIALInComplexity}
is immediate from fact that O-IAL calls PF-AR
in its step \ref{OIAL7}
with function pair $(\mathcal L_{c}^{s}(\cdot,p_k),\psi_n)$ and tolerance $\epsilon=\min\left\{(c\hat \epsilon^2)/4D,\hat \rho\right\}$, part(a), and the last conclusion of 
\Cref{PFARComplexity}.
\end{proof}

The next lemma derives a lower bound on an inner product involving
the primal feasibility residual $\mathcal Az_k-b$. This estimate
will play a key role in the complexity analysis.

\begin{lemma}\label{lem:lowerbndA}
For any iteration index $k \geq 1$ generated by the O-IAL method, it holds that
\begin{equation}\label{inner product bound}
  \inner{\mathcal{A}z_k-b}{p_{*}-p_k}\geq \inner{r_k}{z_{*}-z_k},
\end{equation}
where the pair $(z_{*},p_{*})$ satisfies \cref{Optimal Primal-Dual}.
\end{lemma}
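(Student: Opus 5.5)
The plan is to combine monotonicity of the subdifferential of $\psi$ with the two optimality systems: the exact one at $(z_*,p_*)$ from \Cref{Optimal Primal-Dual}, and the approximate one at $(z_k,p_k)$ from \Cref{inclusion Lagrangian} in \Cref{ARtranslation}(b).

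First, I extract subgradients of $\psi=\psi_s+\psi_n$ at the two points. From \Cref{inclusion Lagrangian}, the vector $u_k:=r_k-\cA^*p_k$ lies in $\nabla\psi_s(z_k)+\partial\psi_n(z_k)\subseteq\partial\psi(z_k)$. From the dual feasibility condition in \Cref{Optimal Primal-Dual}, the vector $u_*:=-\cA^*p_*$ lies in $\partial\psi(z_*)$.

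Second, since $\psi_s$ is convex and $\psi_n$ is closed proper convex, $\partial\psi$ is a monotone operator. Hence
\[
\inner{u_k-u_*}{z_k-z_*}\ge 0,
\qquad\text{that is,}\qquad
\inner{r_k-\cA^*p_k+\cA^*p_*}{z_k-z_*}\ge 0 .
\]
If one prefers to avoid the subdifferential sum rule, the same inequality follows by adding the two subgradient inequalities for the convex function $\psi_s$ and the two for $\psi_n$.

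Third, I move the adjoint across the inner product and use primal feasibility $\cA z_*=b$, which gives $\cA(z_k-z_*)=\cA z_k-b$. The inequality then becomes
\[
\inner{r_k}{z_k-z_*}+\inner{p_*-p_k}{\cA z_k-b}\ge 0 .
\]
Rearranging yields $\inner{\cA z_k-b}{p_*-p_k}\ge\inner{r_k}{z_*-z_k}$, which is exactly \Cref{inner product bound}.

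The main point to get right is that the inclusion in \Cref{inclusion Lagrangian} involves the \emph{updated} multiplier $p_k$, not $p_{k-1}$. This is guaranteed by the gradient identity \Cref{gradient formula} together with the dual update $p_k=p_{k-1}+c(\cA z_k-b)$. It also holds for $k=1$, via steps~\ref{OIAL2}--\ref{OIAL4}, so the conclusion is valid for every $k\ge 1$. Beyond this observation, the argument is routine.
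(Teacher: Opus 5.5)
Your proposal is correct and matches the paper's proof. The paper adds the two subgradient inequalities for $\psi$, namely $\psi(z_*)-\psi(z_k)\ge\inner{r_k-\cA^*p_k}{z_*-z_k}$ and $\psi(z_k)-\psi(z_*)\ge\inner{-\cA^*p_*}{z_k-z_*}$; this is exactly your monotonicity step. It then uses $\cA z_*=b$ to rewrite $\inner{\cA^*(p_*-p_k)}{z_k-z_*}$ as $\inner{\cA z_k-b}{p_*-p_k}$.
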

\begin{proof}
It is easy to see that the inclusion in \cref{inclusion Lagrangian} and the fact that $\psi(z):=\psi_s(z)+\psi_n(z)$ is a convex function implies that
$r_k \in \partial \psi(z_k)+\mathcal A^{*}p_k$. This inclusion and the subdifferential inequality at $z_{*}$ imply that
\begin{align}\label{rel 1}
\psi(z_*) - \psi(z_k) \geq \inner{r_k -\mathcal{A}^*p_k}{z_* - z_k}.
\end{align}
Now, suppose that $(z_{*},p_{*})$
is an optimal primal-dual pair satisfying \Cref{ass:A1}. It then follows that $(z_{*},p_{*})$ satisfies the inclusion in \cref{Optimal Primal-Dual}. It then follows from this inclusion and the subdifferential inequality at $z_k$ that
\begin{align}\label{rel 2}
  \psi(z_k) - \psi(z_*) \geq \inner{-\mathcal{A}^*p_*}{z_k - z_*}.
\end{align}
It then follows by adding \cref{rel 1} and \cref{rel 2} that
\begin{equation}\label{monotonicity}
  \inner{\mathcal{A}^{*}(p_{*}-p_k)}{z_k-z_{*}}\geq \inner{r_k}{z_{*}-z_k}.
\end{equation}
The fact that $z_{*}$ satisfies the second relation in \cref{Optimal Primal-Dual} together with
\cref{monotonicity} then imply that
\[\inner{\mathcal{A}z_k-b}{p_{*}-p_k}\overset{\cref{Optimal Primal-Dual}}{=}\inner{\mathcal{A}(z_k-z_{*})}{p_{*}-p_k}=\inner{z_k-z_{*}}{\mathcal{A}^{*}(p_{*}-p_k)}\overset{\cref{monotonicity}}{\geq} \inner{r_k}{z_{*}-z_k},\]
which immediately implies the result.
\end{proof}

The following lemma combines the previous inner-product estimate
with a telescoping argument to derive a bound on the
primal feasibility residuals.

\begin{lemma}\label{lem:ALtelescopic}
For any iteration index $k \geq 1$ generated by O-IAL, it holds that
\begin{equation}
\label{final inequality}
 \sum_{\ell=1}^{k}\|\mathcal{A}z_{\ell}-b\|^2\leq \frac{\|p_{*}-p_0\|^2}{c^2}+\frac{k\hat\epsilon^2}{2}
\end{equation}
where $p_{*}$ is as in
\cref{Optimal Primal-Dual} and $p_0$, $c$, and $\hat \epsilon$ are inputs to O-IAL.
\end{lemma}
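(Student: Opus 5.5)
The plan is to expand the dual update $p_\ell=p_{\ell-1}+c(\cA z_\ell-b)$ around the optimal multiplier $p_*$, and then use \Cref{lem:lowerbndA} to control the cross term.

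\textbf{Step 1: one-step identity.} For each $\ell\ge1$, writing $p_{\ell-1}=p_\ell-c(\cA z_\ell-b)$,
\[
\|p_*-p_{\ell-1}\|^2=\|p_*-p_\ell\|^2+2c\inner{\cA z_\ell-b}{p_*-p_\ell}+c^2\|\cA z_\ell-b\|^2 .
\]

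\textbf{Step 2: bound the cross term.} By \Cref{lem:lowerbndA},
\[
\inner{\cA z_\ell-b}{p_*-p_\ell}\ge \inner{r_\ell}{z_*-z_\ell}\ge -\|r_\ell\|\,\|z_*-z_\ell\|.
\]
Both $z_\ell$ and $z_*$ lie in $\Dn$: PF-AR returns points in $\Dn$, and $\psi_n(z_*)<\infty$. Hence $\|z_*-z_\ell\|\le D$. By \Cref{ARtranslation}(b), $\|r_\ell\|\le c\hat\epsilon^2/(4D)$. So the cross term is at least $-c\hat\epsilon^2/4$.

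\textbf{Step 3: telescope.} Combining Steps 1 and 2 gives
\[
c^2\|\cA z_\ell-b\|^2\le \|p_*-p_{\ell-1}\|^2-\|p_*-p_\ell\|^2+\tfrac{c^2\hat\epsilon^2}{2}.
\]
Summing over $\ell=1,\dots,k$ telescopes the right-hand side. Dropping the nonnegative term $\|p_*-p_k\|^2$ and dividing by $c^2$ yields \eqref{final inequality}.

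The only point needing care is confirming that the first call in step~\ref{OIAL2} also satisfies \Cref{ARtranslation}(b). It does, since that call is identical in form to the one in step~\ref{OIAL7}. The inequality then holds for every $\ell\ge1$.
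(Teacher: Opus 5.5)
Your proof is correct and follows essentially the same route as the paper's. Both expand the multiplier update around $p_*$ with the same completing-the-square identity. Both bound the cross term below by $-c\hat\epsilon^2/4$ using \Cref{lem:lowerbndA}, Cauchy--Schwarz, the diameter $D$, and the PF-AR tolerance $c\hat\epsilon^2/(4D)$. Both then telescope, drop $\|p_*-p_k\|^2$, and divide by $c^2$.
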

\begin{proof}
Suppose that $k\geq 1$ is an index generated by O-IAL and let $\ell\geq 1$ be
an index such that $\ell \leq k$. 
Relation \cref{inner product bound}, Cauchy-Schwarz inequality,
and the fact that second relation in \cref{inclusion
Lagrangian} holds for $k=\ell$ imply that 
\begin{align}\label{final bound on inner product}
\inner{\mathcal{A}z_{\ell}-b}{p_{*}-p_{\ell}}
      \overset{\cref{inner product bound}} \geq
      \inner{r_{\ell}}{z_{*}-z_{\ell}}
      \overset{\cref{diameter}}{\geq}  -D \|r_{\ell}\|\overset{\cref{inclusion Lagrangian}}{\geq} -\frac{c\hat \epsilon^2}{4}
\end{align}
holds for all indices $\ell \leq k$.
Using the above relation, the update for $p_k$ in step~\ref{OIAL9}, and completing the square, we then have that
\begin{align}\label{bound on feasibility}
\|p_*-p_{\ell-1}\|^2-\|p_*-p_{\ell}\|^2
  &=\|p_{\ell-1}-p_{\ell}\|^2 + 2 \inner{p_{\ell}-p_{\ell-1}}{p_*-p_{\ell}} \nonumber\\
&=c^2 \|\mathcal{A}z_{\ell}-b\|^2 + 
2 c \inner{\mathcal{A}z_{\ell}-b}{p_*-p_{\ell}}.
\end{align}

It then follows by summing relation \cref{bound on feasibility} from
$\ell=1$ to $k$ and re-arranging terms that
\[c^2\sum_{\ell=1}^k\|\mathcal Az_{\ell}-b\|^2\leq \|p_*-p_0\|^2+\frac{kc^2\hat\epsilon^2}{2}.\]
The result then immediately follows from dividing both sides of the above relation by $c^2$.
\end{proof}
The following proposition characterizes the number of
outer iterations that O-IAL performs to find a triple $(z,p,r)$
satisfying the approximate primal-dual optimality conditions in
\cref{ALsolutiontype}.

\begin{prop}\label{OutItCompl}
    The O-IAL method performs at most
    \begin{equation}\label{Outer Complexity Result}
        \left\lceil \frac{2\Pi(p_0)}{\hat \epsilon^2c^2} \right\rceil
    \end{equation}
outer iterations to find a triple $(z,p,r)$ satisfying
\cref{ALsolutiontype}, where $\Pi(\cdot)$ is as in \cref{DualDomainNew}, $\hat \epsilon$ is an input tolerance, and $c$ is the input penalty parameter.
\end{prop}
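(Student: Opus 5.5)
The plan is to combine \Cref{lem:ALtelescopic} with a counting argument, and then pass from a fixed optimal multiplier $p_*$ to the distance $\Pi(p_0)$. First I will note that \Cref{lem:lowerbndA}, and hence \Cref{lem:ALtelescopic}, uses only the optimality conditions \cref{Optimal Primal-Dual} for the pair $(z_*,p_*)$. The argument therefore works for any $p_*\in\mathcal P_\star$, provided some $z_*$ pairs with it. In the convex setting, every dual optimal $p_*$ satisfies $0\in\partial\psi(z_*)+\cA^*p_*$ together with $z_*$ from \Cref{ass:A1}: strong duality $\pLC=d^*_{LC}$ holds under \Cref{ass:A1}, so $z_*$ minimizes $\cL(\cdot,p_*)$. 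Consequently \cref{final inequality} holds with $\|p_*-p_0\|^2$ replaced by $\inf_{p_*\in\mathcal P_\star}\|p_*-p_0\|^2=\Pi(p_0)$. Since $\mathcal P_\star$ is closed, this infimum is attained.

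Next I will argue by contradiction. Suppose O-IAL has not terminated after $K:=\lceil 2\Pi(p_0)/(\hat\epsilon^2c^2)\rceil$ outer iterations. The while loop checks $\|\cA z_k-b\|$ after each iterate, so every generated $z_\ell$ with $1\le \ell\le K$ would satisfy $\|\cA z_\ell-b\|>\hat\epsilon$. Summing these gives $\sum_{\ell=1}^K\|\cA z_\ell-b\|^2>K\hat\epsilon^2$. Combining this with the $\Pi$-version of \cref{final inequality} yields $K\hat\epsilon^2<\Pi(p_0)/c^2+K\hat\epsilon^2/2$, that is, $K<2\Pi(p_0)/(\hat\epsilon^2c^2)$. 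This contradicts the choice of $K$. The edge case $\Pi(p_0)=0$ is handled the same way: it forces $K=0$, and then the first iterate already satisfies $\|\cA z_1-b\|^2\le \hat\epsilon^2/2$. Strictly, this shows at most $\max\{1,K\}$ iterations, consistent with the stated bound up to that trivial convention. Hence some $k\le K$ satisfies $\|\cA z_k-b\|\le\hat\epsilon$ and the loop stops there. By \Cref{ARtranslation}(b), the triple $(z_k,p_k,r_k)$ then satisfies \cref{ALsolutiontype}.

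I expect the main subtle step to be justifying that \Cref{lem:lowerbndA} applies with an arbitrary dual optimal $p_*$ (the one nearest $p_0$), not just the particular one in \Cref{ass:A1}. I will handle this via the standard saddle-point argument. From \Cref{ass:A1}, $D(p_*)=\pLC$ for the given pair. For any other $\tilde p\in\mathcal P_\star$, we have $D(\tilde p)=\pLC=\psi(z_*)=\cL(z_*,\tilde p)$, so $z_*\in\argmin\cL(\cdot,\tilde p)$. This gives the d.f. inclusion for $(z_*,\tilde p)$. Alternatively, I could use a monotonicity estimate at the level of the dual function directly. The rest of the argument is routine.
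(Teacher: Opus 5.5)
Your proof is correct and follows the same route as the paper: apply the telescoping bound of \Cref{lem:ALtelescopic} with an arbitrary dual-optimal multiplier $p_*$, pass to the infimum to obtain $\Pi(p_0)$, and conclude by a short contradiction argument on the feasibility residuals $\|\cA z_\ell-b\|$. Two of your additions fill in details the paper leaves implicit: the saddle-point argument that every $p_*\in\mathcal P_\star$ satisfies \cref{Optimal Primal-Dual} together with the $z_*$ of \Cref{ass:A1}, and the remark that the bound should read $\max\{1,\lceil 2\Pi(p_0)/(\hat\epsilon^2c^2)\rceil\}$, since O-IAL always performs at least one outer iteration.
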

\begin{proof}
It follows from \Cref{ARtranslation}(b) that, for each outer index $k\geq 1$
generated by O-IAL, the call made to the PF-AR method in step~\ref{OIAL7}
outputs a pair $(z_k,r_k)$ satisfying
\[
    r_k\in \nabla \psi_s(z_k)+\partial\psi_n(z_k)+\mathcal A^{*}p_k,
    \qquad
    \|r_k\|\leq \hat \rho .
\]
Hence, by the condition in the while loop of O-IAL, the definition of $Res_k$ in
step~\ref{OIAL8}, and the fact that the above inclusion holds for every
iteration index $k\geq 1$, if O-IAL terminates, then its output
$(z,p,r)=(z_k,p_k,r_k)$ satisfies \cref{ALsolutiontype}.

It remains to show that O-IAL exits the while loop, and therefore terminates,
after at most
\Cref{Outer Complexity Result}
outer iterations. To this end, let $p_*$ be an arbitrary optimal solution of the Lagrangian dual
of \cref{LinearConstraints}. Relation \cref{final inequality} implies that
\[
    k\min_{1\leq \ell \leq k}\|\mathcal A z_\ell-b\|^2
    \leq
    \sum_{\ell=1}^{k}\|\mathcal A z_\ell-b\|^2
    \leq
    \frac{\|p_*-p_0\|^2}{c^2}
    +\frac{k\hat\epsilon^2}{2}.
\]
Dividing both sides by $k$ then gives
\begin{align}\label{BoundFeas}
    \min_{1\leq l \leq k}\|\mathcal Az_l-b\|^2\leq \frac{\|p_{*}-p_0\|^2}{c^2k}+\frac{\hat\epsilon^2}{2}.
\end{align}
The outer iteration bound \Cref{Outer Complexity Result} then follows from \Cref{BoundFeas}, the fact that $p_{*}$ is an arbitrary optimal multiplier, and a simple proof by contradiction.
The result of the proposition then follows from the outer iteration complexity bound and the above observations. 
\end{proof}

We are now ready to prove \Cref{Main Complexity O-IAL}. 
\begin{proof}[Proof of \Cref{Main Complexity O-IAL}]
\begin{enumerate}[label=(\alph*)]
\item
It follows from \Cref{OutItCompl} that the O-IAL method performs at most  
\begin{equation}\label{Outer Complexity}
        \left\lceil \frac{2\Pi(p_0)}{\hat \epsilon^2c^2} \right\rceil
    \end{equation} 
outer iterations to find a triple $(z,p,r)$ satisfying
\cref{ALsolutiontype}.
	Moreover, it follows from \Cref{ARtranslation}(b) that the call to the PF-AR method performs at most \begin{align*}
 \mathcal O\left(\sqrt{\left(\bar L+c\|\cA\|^2\right) \max\left\{\frac{D^2}{c\hat \epsilon^2}, \frac{D}{\hat \rho} \right\}}\right) 
\end{align*}
gradient/resolvent evaluations during each outer iteration of O-IAL. The conclusion of part (a) then follows from multiplying these two complexity bounds.

\item
It follows immediately from part (a) and the assumption that $c=\mathcal O(1/\hat \epsilon)$ that the O-IAL method performs at most
        \begin{equation}
          \mathcal O\left(2\Pi(p_0)\sqrt{\left(\bar L+\frac{\|\cA\|^2}{\hat \epsilon}\right) \max\left\{\frac{D^2}{\hat \epsilon}, \frac{D}{\hat \rho} \right\}}\right)
        \end{equation}
total gradient evaluations to find an $(\hat \epsilon, \hat \rho)$-approximate solution. The first conclusion of part (b) follows immediately
from this observation. The second
conclusion of part (b) follows immediately from the first conclusion and considering $\hat \epsilon=\hat \rho$.
\end{enumerate}
\end{proof}

\subsection{Best-Iterate Convergence of a Restarted Parameter-Free AL Method}\label{OPF-IAL}
The O-IAL method in \Cref{alg:O-IAL} is not parameter-free, since it requires knowledge of the domain diameter $D$. In this subsection, we develop OPF-IAL, a restarted parameter-free variant of O-IAL that uses a guess-and-check procedure for $D$. Each cycle runs O-IAL for a prescribed number of iterations using the current diameter guess in \Cref{diameter}; if no triple $(z_k,p_k,r_k)$ satisfying \Cref{ALsolutiontype} is found, OPF-IAL restarts with a larger diameter guess and a larger iteration budget.

As in O-IAL, the goal is to compute a $(\hat \epsilon,\hat \rho)$-approximate primal-dual solution satisfying \Cref{ALsolutiontype}. When $\hat \epsilon=\hat \rho$, OPF-IAL retains the optimal $\mathcal O(1/\hat \epsilon)$ complexity bound of O-IAL under \Cref{ass:A1}. We now formally describe the OPF-IAL method.



\begin{algorithm}[H]
\caption{OPF-IAL Method}
\label{alg:OPF-IAL}
\begin{algorithmic}[1]
\REQUIRE
function pair $(\psi_s,\psi_n)$, initial points $z_0 \in {\Dn}$ and $p_0\in \R^{m}$, primal-dual tolerance pair $(\hat \epsilon,\hat \rho) \in \mathbb R^{2}_{++}$, penalty parameter $c>0$, initial guess $I_{0}>0$, and smoothness and diameter estimates
$M_0:=\|\nabla \psi_s(z_0)-\nabla \psi_s(w_0)\|/(4\|z_0-w_0\|)$
and $D_0:=\|z_0-w_0\|$, where $w_0\in \Dn$ is chosen as in
\Cref{remarkPFAR}. \label{InputOPF}
\STATE      $\ell \gets 1$ \label{OPF1}
\STATE $\text{stopcrit } \gets \infty$ \label{OPF2}
\ENSURE set: $k \gets 0$, $z_0 \gets z_{\ell-1}$, and $p_0 \gets p_0$.\label{OPFInit}
\WHILE{$\text{stopcrit } >  \hat \epsilon$ (main outer loop)}\label{OPF3}
\STATE set $k\gets k+1$\label{OPF4}
\STATE call PF-AR method  (\Cref{alg:PF-AR})  with 
function pair  $(\cL^s_{c}(\cdot,p_{k-1}), \psi_n(\cdot))$, initial point $z_{k-1}$,\\
\qquad estimates $M_0$ and $D_0$, and
tolerance 
$\epsilon=\min\left\{\frac{\hat \epsilon}{4
D_{\ell-1}}, \hat \rho\right\}$, and let $(z_k,r_k)$ be its output; \label{ARorFISTACall}
\STATE $p_{k} \gets p_{k-1}+c(\mathcal Az_k-b)$;\label{OPF6}
\IF{$k>\left\lceil 2 I_{\ell-1} \right \rceil$}\label{OPF7}
\STATE 
\textbf{restart}, \label{OPF8}
\STATE i.e., set $z_{\ell}=z_k$, $D_{\ell}=2 D_{\ell-1}$, $I{_\ell}=2 I_{\ell-1}$, and  $\ell \leftarrow
\ell+1$, \label{OPF9}
\STATE and execute \textbf{Initialization} above;\label{OPF10}
\ELSE \label{OPF11}
\STATE set $\text{stopcrit } \gets \|\mathcal Az_k-b\|$.\label{OPF12}
\ENDIF \label{OPF13}
\ENDWHILE \label{OPF14}
\OUTPUT triple $(z,p,r):=(z_k,p_{k},r_k)$ satisfying
\cref{ALsolutiontype} \label{OPFOutput}
\end{algorithmic}
\end{algorithm}

We now make several remarks about the structure of OPF-IAL. The algorithm involves three nested types of iterations. Iterations indexed by $\ell$ are referred to as \emph{cycles}, iterations indexed by $k$ are referred to as \emph{outer iterations}, and the iterations performed internally by the PF-AR method in step~\ref{ARorFISTACall} are referred to as \emph{inner iterations}.

During each outer iteration of the $\ell$-th cycle, step~\ref{ARorFISTACall} calls the PF-AR method to approximately minimize the augmented Lagrangian function
$
\mathcal L_c(z,p):=\cL^s_c(z,p)+\psi_n(z)
$
with tolerance depending on the current estimate $ D_{\ell-1}$ of the diameter $D$. Moreover, step~\ref{OPF7} checks whether the current outer iteration count $k$ exceeds the threshold $\lceil 2 I_{\ell-1}\rceil$. If this condition is satisfied, then OPF-IAL restarts and a new cycle begins. At the restart, the estimates $D_\ell$ and $ I_{\ell}$ are updated according to
$
D_\ell=2 D_{\ell-1}$ and $
 I_{\ell}=2I_{\ell-1},
$
and the last primal iterate generated during the previous cycle is used to initialize the next cycle.

Finally, like O-IAL, the while loop in steps \ref{OPF3}--\ref{OPF14} of OPF-IAL terminates once
$
\|\mathcal A z_k-b\|\leq \hat \epsilon,
$
since OPF-IAL maintains approximate dual stationarity at every outer iteration.

\Cref{Main Complexity OPF-IAL} provides the main complexity result for OPF-IAL. The
proof of \Cref{Main Complexity OPF-IAL} is given in a collection of propositions and
lemmas in \Cref{Proof OPF-IAL}.

\begin{theorem}\label{Main Complexity OPF-IAL}
If OPF-IAL in \Cref{alg:OPF-IAL} uses penalty parameter input $c=1/\hat \epsilon$, it performs at most
\begin{equation*}
\begin{aligned}
\mathcal O \Bigg(
&\left(1+\left \lceil \max \left\{\log_{2}\left(\frac{D}{D_0} \right),
\log_{2}\left(\frac{\Pi(p_0)}{I_{0}} \right) \right\}\right \rceil\right)
\left[1+\left\lceil  \max\left\{\Pi(p_0),\frac{DI_{0}}{D_0} \right\}\right\rceil\right]  \\
&\times
\sqrt{\left(\bar L+\frac{\|\cA\|^2}{\hat \epsilon}\right)
\max\left\{\frac{D^2}{\hat \epsilon},
\frac{\Pi(p_0)D_0D}{I_{0}\hat \epsilon},
\frac{D}{\hat \rho} \right\}}
\Bigg)
\end{aligned}
\end{equation*}
resolvent/gradient evaluations to find an $(\hat \epsilon,\hat \rho)$-approximate solution of \Cref{LinearConstraints}
according to the criterion in \cref{ALsolutiontype}.
Hence, if $\hat\rho=\hat \epsilon$, it follows that OPF-IAL performs at most $\mathcal
O\left(1/\hat \epsilon\right)$ total gradient evaluations to find an $(\hat \epsilon,\hat \epsilon)$-approximate solution
of \Cref{LinearConstraints}.
\end{theorem}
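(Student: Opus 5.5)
The plan is to analyze OPF-IAL cycle by cycle. Within a cycle the argument will mirror the O-IAL analysis, and a doubling argument will then sum over cycles.

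\textbf{Step 1: per-cycle inner complexity.} Fix a cycle $\ell$. Every PF-AR call in step~\ref{ARorFISTACall} uses the tolerance $\epsilon_\ell=\min\{\hat\epsilon/(4D_{\ell-1}),\hat\rho\}$. Since $c=1/\hat\epsilon$, we have $\hat\epsilon/(4D_{\ell-1})=c\hat\epsilon^2/(4D_{\ell-1})$, which is exactly the O-IAL tolerance with $D$ replaced by the guess $D_{\ell-1}$. By \Cref{ARtranslation}(a) the subproblem is $(\bar L+\|\cA\|^2/\hat\epsilon)$-smooth, so \Cref{PFARComplexity} bounds each call by
\[
\mathcal O\Bigl(\sqrt{(\bar L+\|\cA\|^2/\hat\epsilon)\max\{DD_{\ell-1}/\hat\epsilon,\;D/\hat\rho\}}\Bigr),
\]
up to the additive logarithmic terms. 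As in \Cref{ARtranslation}(b), the output satisfies $r_k\in\nabla\psi_s(z_k)+\partial\psi_n(z_k)+\cA^*p_k$ with $\|r_k\|\le\hat\rho$. Hence, whenever the loop stops (only at a non-restart step with $\|\cA z_k-b\|\le\hat\epsilon$), the output is an $(\hat\epsilon,\hat\rho)$-approximate solution.

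\textbf{Step 2: a good cycle terminates.} Repeat \Cref{lem:lowerbndA} and \Cref{lem:ALtelescopic} inside a cycle that starts from the fixed multiplier $p_0$ (the Initialization resets $p_0$). If $D_{\ell-1}\ge D$, then $\langle r_j,z_*-z_j\rangle\ge -D\|r_j\|\ge -\hat\epsilon/4=-c\hat\epsilon^2/4$, and as in \Cref{OutItCompl} we get
\[
\min_{j\le k}\|\cA z_j-b\|^2\le \frac{\Pi(p_0)\hat\epsilon^2}{k}+\frac{\hat\epsilon^2}{2}.
\]
The right-hand side is below $\hat\epsilon^2$ once $k\ge 2\Pi(p_0)$. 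So if also $2I_{\ell-1}\ge 2\Pi(p_0)$, the cycle stops before a restart. One point needs care: the stopping test is applied at every non-restart iterate, so the first $j$ with small residual triggers exit, and the min-bound suffices. Since $D_\ell=2^\ell D_0$ and $I_\ell=2^\ell I_0$, this occurs no later than cycle index
\[
\bar\ell=1+\Bigl\lceil\max\{\log_2(D/D_0),\log_2(\Pi(p_0)/I_0)\}\Bigr\rceil,
\]
which is the first factor in the bound.

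\textbf{Step 3: summing over cycles.} Cycle $\ell$ performs at most $\lceil 2I_{\ell-1}\rceil+1$ outer iterations, each costing at most the Step 1 bound with $D_{\ell-1}\le D_{\bar\ell-1}$. I will bound both $I_{\bar\ell-1}$ and $D_{\bar\ell-1}$ uniformly. Because $2^{\bar\ell-1}\lesssim\max\{D/D_0,\Pi(p_0)/I_0\}$, we get $I_{\bar\ell-1}\lesssim\max\{\Pi(p_0),DI_0/D_0\}$ and $D_{\bar\ell-1}\lesssim\max\{D,\Pi(p_0)D_0/I_0\}$. Inserting the latter into the Step 1 bound produces the middle term $\Pi(p_0)D_0D/(I_0\hat\epsilon)$ inside the square root. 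Multiplying the number of cycles, the uniform per-cycle outer count $1+\lceil\max\{\Pi(p_0),DI_0/D_0\}\rceil$, and the per-call cost gives the stated bound; a crude product is enough since only an $\mathcal O$ bound is claimed. With $\hat\rho=\hat\epsilon$, all $\epsilon$-independent factors are constants and the square root is $\mathcal O(\sqrt{\hat\epsilon^{-1}\cdot\hat\epsilon^{-1}})=\mathcal O(1/\hat\epsilon)$.

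The main obstacle I expect is Step 2. The telescoping has to be redone carefully for a cycle that restarts the multiplier at $p_0$ but starts the primal point from $z_{\ell-1}$, and it must account for the cycle's first iterate. I also need to check that the cutoff $k>\lceil 2I_{\ell-1}\rceil$ leaves enough iterations for the averaging bound. The logarithmic terms from PF-AR will be absorbed as lower-order terms.
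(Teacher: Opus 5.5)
Your proposal is correct and follows essentially the same route as the paper. It bounds each PF-AR call using the tolerance $\min\{\hat\epsilon/(4D_{\ell-1}),\hat\rho\}$, and uses the telescoping argument to show that a cycle with $D_{\ell-1}\ge D$ and $I_{\ell-1}\ge\Pi(p_0)$ exits within $\lceil 2\Pi(p_0)\rceil$ outer iterations, which gives the cycle count; it then bounds $D_{\ell-1}$ and $I_{\ell-1}$ uniformly via the doubling and multiplies the three factors, with your constant $2$ in those uniform bounds being slightly tighter than the paper's $4$ and immaterial under $\mathcal O$.
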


\subsubsection{Proof of \Cref{Main Complexity OPF-IAL}}\label{Proof OPF-IAL}

This subsection is devoted to the proof of \Cref{Main Complexity
OPF-IAL}. We begin with a key proposition that provides a bound on the number of outer iterations that a cycle of OPF-IAL performs if it is made with a correct guess for the diameter.

\begin{prop}\label{Upper Bound Cycles}
    Suppose that the $\ell$-th cycle of OPF-IAL is performed with a guess $D_{\ell-1} \geq D$, where $D$ is as in \Cref{diameter}. It then holds that the $\ell$-th cycle of OPF-IAL performs at most
    \begin{equation}\label{Outer Complexity OPF-IAL}
        \left\lceil 2\Pi(p_0)\right\rceil
    \end{equation}
outer iterations, i.e, those indexed by $k$, to find a triple $(z,p,r):=(z_k,p_k,r_k)$  satisfying \Cref{ALsolutiontype}.
\end{prop}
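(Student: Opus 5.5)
We mirror the O-IAL analysis (\Cref{lem:lowerbndA}, \Cref{lem:ALtelescopic}, \Cref{OutItCompl}) inside a single cycle of OPF-IAL. At the start of the $\ell$-th cycle the Initialization step resets $k\gets 0$ and $p_0\gets p_0$, so the dual sequence of the cycle begins at the original multiplier $p_0$; only the primal starting point changes. Within the cycle, step~\ref{ARorFISTACall} calls PF-AR on $(\cL^s_c(\cdot,p_{k-1}),\psi_n)$ with tolerance $\epsilon=\min\{\hat\epsilon/(4D_{\ell-1}),\hat\rho\}$. Exactly as in \Cref{ARtranslation}(b), using the gradient identity \Cref{gradient formula} together with the update in step~\ref{OPF6} and \Cref{PFARComplexity}, each outer iterate satisfies
\[
r_k\in\nabla\psi_s(z_k)+\partial\psi_n(z_k)+\cA^*p_k,\qquad \|r_k\|\le \min\Big\{\frac{\hat\epsilon}{4D_{\ell-1}},\hat\rho\Big\}.
\]
Since the OPF-IAL statement takes $c=1/\hat\epsilon$, we have $\hat\epsilon/(4D_{\ell-1})=c\hat\epsilon^2/(4D_{\ell-1})$. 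This matches the O-IAL tolerance, with $D$ replaced by the guess $D_{\ell-1}$.

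\Cref{lem:lowerbndA} uses only the inclusion and \Cref{ass:A1}, so it holds verbatim: $\inner{\cA z_k-b}{p_*-p_k}\ge\inner{r_k}{z_*-z_k}$. Since $z_k,z_*\in\Dn$, we get
\[
\inner{r_k}{z_*-z_k}\ge -D\|r_k\|\ge -\frac{D}{D_{\ell-1}}\cdot\frac{c\hat\epsilon^2}{4}\ge -\frac{c\hat\epsilon^2}{4},
\]
and this last step is where the hypothesis $D_{\ell-1}\ge D$ is used. The telescoping identity \Cref{bound on feasibility} from \Cref{lem:ALtelescopic} then gives, for every $k$ in the cycle,
\[
\sum_{j=1}^k\|\cA z_j-b\|^2\le \frac{\|p_*-p_0\|^2}{c^2}+\frac{k\hat\epsilon^2}{2}.
\]

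Taking $p_*$ to be the projection of $p_0$ onto $\mathcal P_\star$ and arguing as in \Cref{OutItCompl}, we obtain
\[
\min_{j\le k}\|\cA z_j-b\|^2\le \frac{\Pi(p_0)}{c^2k}+\frac{\hat\epsilon^2}{2}.
\]
Suppose $k=\lceil 2\Pi(p_0)/(\hat\epsilon^2c^2)\rceil$ iterations all had $\|\cA z_j-b\|>\hat\epsilon$. Then the right-hand side would be at most $\hat\epsilon^2$, which is a contradiction. With $c=1/\hat\epsilon$ this count equals $\lceil 2\Pi(p_0)\rceil$. Whenever step~\ref{OPF12} is reached, stopcrit is set to $\|\cA z_k-b\|$, so the while loop exits at the first index with residual at most $\hat\epsilon$. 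The output triple then satisfies \Cref{ALsolutiontype}, because $\|r_k\|\le\hat\rho$.

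The main obstacle is the interaction with the restart test in step~\ref{OPF7}. The argument must either note that, if the cycle restarts before $\lceil 2\Pi(p_0)\rceil$ iterations, the claim ``at most $\lceil 2\Pi(p_0)\rceil$ outer iterations'' holds trivially, or interpret the proposition as a bound on the iterations needed when no restart intervenes. I would state this explicitly. The other point needing care is the requirement that $\mathcal P_\star\neq\emptyset$, which is guaranteed by \Cref{ass:A1}, so that the infimum defining $\Pi(p_0)$ is attained.
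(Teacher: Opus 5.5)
Your proposal is correct and follows the paper's proof essentially step for step. The steps are the same: the inner-product bound $\langle \mathcal A z_k-b,\,p_*-p_k\rangle \ge -D\|r_k\| \ge -\hat\epsilon/4$, which is where $D_{\ell-1}\ge D$ is used; the telescoping of $\|p_*-p_k\|^2$ with $c=1/\hat\epsilon$ starting from the reset multiplier $p_0$; and a contradiction on the smallest residual. Your explicit caveat about the restart test $k>\lceil 2I_{\ell-1}\rceil$ states a point the paper's proof leaves implicit: a triple is guaranteed to be found only when $\lceil 2I_{\ell-1}\rceil\ge\lceil 2\Pi(p_0)\rceil$, which is exactly how the proposition is used in the subsequent cycle count.
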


\begin{proof}
Suppose that the $\ell$-th cycle of OPF-IAL is performed with a guess $D_{\ell-1} \geq D$ and let $k\geq 1$ be an iteration
index generated during the $\ell$-th cycle.
It follows from relations \Cref{diameter,inner product bound,inclusion Lagrangian} that 
\begin{align}\label{Bound Inner Product}
\inner{\mathcal{A}z_k-b}{p_{*}-p_k}
      \overset{\cref{inner product bound}} \geq
      \inner{r_k}{z_{*}-z_k}
      \overset{\cref{diameter}}{\geq}  -D \|r_k\|\overset{\cref{inclusion Lagrangian}}{\geq} -\frac{D\hat \epsilon}{4 D_{\ell-1}}\geq -\frac{D\hat \epsilon}{4D}=-\frac{\hat \epsilon}{4}.
\end{align}
It follows from a similar argument as in the proof of \Cref{bound on feasibility}, the update
rule for $p_k$ in step~\ref{OPF6} of OPF-IAL,
and the fact that $c=1/\hat \epsilon$ that 
\begin{align}\label{Recursion}
\|p_*-p_{k-1}\|^2-\|p_*-p_k\|^2&=\frac{1}{\hat \epsilon^2} \|\mathcal{A}z_k-b\|^2 + 
\frac{2}{\hat \epsilon} \inner{\mathcal{A}z_k-b}{p_*-p_k} \nonumber\\
& \overset{\cref{Bound Inner Product}}{\geq} \frac{1}{\hat \epsilon^2}\|\mathcal{A}z_k-b\|^2 - 
\frac{1}{2}.
\end{align}   
It follows from summing the above inequalities from $k=1$ to $m$, passing to the minimum, and multiplying both 
sides by $\hat \epsilon^2$ that
\[\min_{1 \leq k\leq m}\|\mathcal A z_k-b\|^2  \leq \frac{\hat \epsilon^2 \|p_{*}-p_0\|^2}{m}+\frac{\hat \epsilon^2}{2}.\]
It then follows from this bound and a simple proof by contradiction that the $\ell$-th cycle performs at most 
$\lceil 2 \Pi(p_0)\rceil$ outer iterations.
Moreover, for every outer iteration generated
during the cycle, a similar argument as in \Cref{ARtranslation}
implies that
\[
r_k\in \nabla \psi_s(z_k)+\partial\psi_n(z_k)+\cA^{*}p_k,
\qquad
\|r_k\|\leq \hat\rho
\]
for all outer indices $k$ generated.
Therefore, the returned triple $(z,p,r)$ satisfies \Cref{ALsolutiontype}.
\end{proof}

\begin{prop}\label{upper bound cycles Complexity}
   OPF-IAL performs at most
   \[1+\left \lceil \max \left\{\log_{2}\left(\frac{D}{D_0} \right), \log_{2}\left(\frac{\Pi(p_0)}{I_0} \right) \right\}\right \rceil\]
cycles to find a triple $(z,p,r)$ satisfying \Cref{ALsolutiontype}.
\end{prop}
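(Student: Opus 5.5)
The plan is to identify a cycle index $\bar\ell$ at which both the diameter guess and the iteration budget are large enough. Then Proposition~\ref{Upper Bound Cycles} guarantees that this cycle terminates before the restart test in step~\ref{OPF7} can fire.

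First, the updates in step~\ref{OPF9} give the closed forms $D_{\ell-1}=2^{\ell-1}D_0$ and $I_{\ell-1}=2^{\ell-1}I_0$ for every cycle $\ell\ge1$. Define
\[
\bar\ell:=1+\left\lceil \max\left\{\log_2\left(\frac{D}{D_0}\right),\log_2\left(\frac{\Pi(p_0)}{I_0}\right)\right\}\right\rceil,
\]
where the maximum is understood to be at least $0$. This is legitimate since $D_0\le D$ by \Cref{remarkPFAR}, so the first logarithm is nonnegative. Since $\bar\ell-1\ge\log_2(D/D_0)$, we get $D_{\bar\ell-1}\ge D$. Since $\bar\ell-1\ge\log_2(\Pi(p_0)/I_0)$, we get $I_{\bar\ell-1}\ge\Pi(p_0)$, and hence $\lceil 2I_{\bar\ell-1}\rceil\ge\lceil2\Pi(p_0)\rceil$.

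Second, suppose OPF-IAL has not terminated before cycle $\bar\ell$. That cycle runs with $D_{\bar\ell-1}\ge D$, so Proposition~\ref{Upper Bound Cycles} applies. It finds a triple satisfying \Cref{ALsolutiontype} within at most $\lceil 2\Pi(p_0)\rceil\le\lceil 2I_{\bar\ell-1}\rceil$ outer iterations. Because the counter $k$ is reset to $0$ at each initialization, the restart condition $k>\lceil 2I_{\bar\ell-1}\rceil$ is never met before that point. Step~\ref{OPF12} therefore records $\|\mathcal Az_k-b\|\le\hat\epsilon$ and the while loop exits. The dual-residual part of \Cref{ALsolutiontype} holds at every outer iteration, as noted at the end of the proof of Proposition~\ref{Upper Bound Cycles}. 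Hence at most $\bar\ell$ cycles are performed, which is the stated bound.

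The main obstacle is checking that Proposition~\ref{Upper Bound Cycles} is genuinely applicable inside cycle $\bar\ell$, despite the restart. Two issues arise.

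\begin{itemize}
\item \textbf{Dual initialization.} The initialization step resets $p_0\gets p_0$, so each cycle restarts from the original multiplier, and the quantity $\Pi(p_0)$ in the proposition is the same for every cycle. Only the primal point changes, and the proposition's argument does not depend on the initial primal point: it uses only $z_k\in\Dn$ and the bound $\|r_k\|\le\hat\epsilon/(4D_{\ell-1})$.
\item \textbf{Penalty parameter.} The proposition assumes $c=1/\hat\epsilon$, which is the choice made in \Cref{Main Complexity OPF-IAL}, so I would state the proof under that choice.
\end{itemize}

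Finally, I would note the off-by-one detail in the comparison: the test uses the strict inequality $k>\lceil2I\rceil$, so termination at iteration $k\le\lceil2\Pi(p_0)\rceil$ indeed precedes any restart.
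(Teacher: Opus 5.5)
Your proof is correct and follows the paper's argument. Like the paper, you use $D_{\ell-1}=2^{\ell-1}D_0$, $I_{\ell-1}=2^{\ell-1}I_0$ and $D_0\le D$ to identify a cycle with both $D_{\ell-1}\ge D$ and $I_{\ell-1}\ge\Pi(p_0)$, then invoke \Cref{Upper Bound Cycles}; your additional checks (the strict restart test, the reset of $k$ and of the multiplier to $p_0$, and the standing choice $c=1/\hat\epsilon$) supply details the paper leaves implicit.
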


\begin{proof}
It follows from \Cref{Upper Bound Cycles} that if the $\ell$-th cycle of 
OPF-IAL is performed with $D_{\ell-1}\geq D$ and $I_{{\ell-1}}\geq \Pi(p_0)$, then 
OPF-IAL finds a triple $(z_k,p_k,r_k)$ satisfying \Cref{ALsolutiontype}. Now, using the definitions of $D_{\ell}$ and $I_{\ell}$, it follows that $D_{\ell-1} \geq D$ and
$I_{\ell-1}\geq \Pi(p_0)$ if 
\[D_{\ell-1}=2^{\ell-1}D_0 \geq D, \text{ and } I_{\ell-1}=2^{\ell-1}I_{0}\geq \Pi(p_0).\]
Hence, by re-arranging the above inequalities and using the fact that $D_0\leq D$, it follows that if the number of cycles $\ell$ satisfies
\[\ell\geq 1+\max \left\{\log_{2}\left(\frac{D}{D_0} \right), \log_{2}\left(\frac{ \Pi(p_0)}{I_{0}} \right) \right\},\]
then OPF-IAL must find a triple $(z,p,r)$ satisfying \Cref{ALsolutiontype}. The result of the lemma then immediately 
follows from this conclusion.
\end{proof}

The following lemma provides upper and lower bounds on the quantities $D_{\ell}$
and $I_{\ell}$.

\begin{lemma}
    The following inequalities hold for any cycle index $\ell\geq 1$ generated by OPF-IAL:
    \begin{align}
    D_{0} &\leq D_{\ell} \leq \max\left\{4D,\frac{4\Pi(p_0)D_0}{I_{0}} \right\} \label{DL bounds}\\
    I_{0} &\leq I_{\ell} \leq  \max\left\{4\Pi(p_0),\frac{4DI_{0}}{D_0} \right\} \label{dpl bounds}.
    \end{align}
\end{lemma}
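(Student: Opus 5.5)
The lower bounds are immediate. By step~\ref{OPF9}, $D_\ell=2^\ell D_0$ and $I_\ell=2^\ell I_0$ for every generated cycle index $\ell$, so $D_\ell\ge D_0$ and $I_\ell\ge I_0$ follow from positivity of $D_0$ and $I_0$. The substance is the two upper bounds. These come from the fact that cycles stop being generated once both guesses are large enough.

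\textbf{Step 1 (last cycle index).} I first identify the largest index attained by the cycle counter. From the proof of \Cref{upper bound cycles Complexity}: if the $\ell$-th cycle runs with $D_{\ell-1}\ge D$ and $I_{\ell-1}\ge \Pi(p_0)$, then by \Cref{Upper Bound Cycles} it terminates within $\lceil 2\Pi(p_0)\rceil\le\lceil 2I_{\ell-1}\rceil$ outer iterations. Hence the restart test in step~\ref{OPF7} is never triggered, and no $D_\ell$ or $I_\ell$ is ever formed. Let $\ell^*$ be the smallest integer $\ell\ge 1$ with $2^{\ell-1}D_0\ge D$ and $2^{\ell-1}I_0\ge\Pi(p_0)$. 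Then the only indices $\ell$ for which $D_\ell,I_\ell$ are defined satisfy $\ell\le \ell^*-1$. This is the main point to state carefully, namely that ``generated'' means a restart actually happened.

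\textbf{Step 2 (bounding $2^{\ell^*-1}$).} By minimality of $\ell^*$, either $\ell^*=1$, or one of the two defining inequalities fails at $\ell^*-1$. That is, $2^{\ell^*-2}D_0<D$ or $2^{\ell^*-2}I_0<\Pi(p_0)$. Hence
\[
2^{\ell^*-1}< \max\left\{\frac{2D}{D_0},\frac{2\Pi(p_0)}{I_0}\right\},
\]
and in the case $\ell^*=1$ the bound $2^0=1\le 2D/D_0$ holds because $D_0\le D$ (\Cref{remarkPFAR}). So for every generated $\ell\le\ell^*-1$ we get
\[
2^\ell\le 2^{\ell^*-1}\le\max\left\{\frac{2D}{D_0},\frac{2\Pi(p_0)}{I_0}\right\}.
\]
This already gives the bounds with constant $2$, and the stated factor $4$ leaves slack. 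If one also counts the index $\ell^*$ itself, for which the quantities are used as $D_{\ell^*-1}$ but a restart could be imagined, the bound doubles to $4$. So I will simply present the estimate $2^\ell\le\max\{4D/D_0,4\Pi(p_0)/I_0\}$, valid for all $\ell\le\ell^*$.

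\textbf{Step 3 (conclude).} Multiplying by $D_0$ gives
\[
D_\ell=2^\ell D_0\le\max\left\{4D,\frac{4\Pi(p_0)D_0}{I_0}\right\},
\]
which is \eqref{DL bounds}. Multiplying by $I_0$ gives
\[
I_\ell=2^\ell I_0\le\max\left\{\frac{4DI_0}{D_0},4\Pi(p_0)\right\},
\]
which is \eqref{dpl bounds}.
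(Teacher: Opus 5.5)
Your proposal is correct and takes essentially the paper's route. Both arguments use the fact (from \Cref{Upper Bound Cycles} and \Cref{upper bound cycles Complexity}) that no restart occurs once $D_{\ell-1}\ge D$ and $I_{\ell-1}\ge\Pi(p_0)$, and then bound $2^\ell$ through the resulting cap on the cycle index; the paper uses $\ell\le 2+\max\{\log_2(D/D_0),\log_2(\Pi(p_0)/I_0)\}$ directly, whereas your minimal index $\ell^*$ and your explicit check that the test in step~\ref{OPF7} is never triggered are a bit more careful and show that the factor $4$ can be sharpened to $2$ for indices at which a restart actually occurs.
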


\begin{proof}
The facts that $D_0\leq D_{\ell}$ and $I_{0} \leq I_{\ell}$ follow immediately
from the update rules of $D_{\ell}$ and $I_{\ell}$ in step~\ref{OPF9} of OPF-IAL.
To show the second inequality in \Cref{DL bounds}, first observe that it follows from \Cref{upper bound cycles Complexity} that the number of cycles $\ell$ that OPF-IAL performs satisfies
\begin{align*}
\ell&\leq 1+  \left \lceil \max \left\{\log_{2}\left(\frac{D}{D_0} \right), \log_{2}\left(\frac{\Pi(p_0)}{I_{0}} \right) \right\}\right \rceil\leq 2+ \max \left\{\log_{2}\left(\frac{D}{D_0} \right), \log_{2}\left(\frac{\Pi(p_0)}{I_{0}} \right) \right\}.
\end{align*}
Hence, it follows from the update rule for $ D_{\ell}$ in step~\ref{OPF9} of \Cref{alg:OPF-IAL} that the following inequalities hold for any cycle index
$\ell\geq 1$ generated by OPF-IAL:
\begin{align}\label{Upper Dl}
D_{\ell}=2^{\ell}D_0&\leq 2^{2+ \max \left\{\log_{2}\left(\frac{D}{D_0} \right), \log_{2}\left(\frac{\Pi(p_0)}{I_{0}} \right) \right\}}D_0 =2^2\cdot \max\left\{\frac{D}{ D_0},\frac{\Pi(p_0)}{I_{0}} \right\}D_0.
\end{align}
The second inequality in \Cref{DL bounds} then immediately follows from \Cref{Upper Dl}.
Likewise, the second inequality in \Cref{dpl bounds} follows from a similar argument as the above argument and 
the fact that $I_{\ell}=2^{\ell}I_{0}$.
\end{proof}

The following proposition upper bounds the total number of outer iterations, i.e. those indexed by $k$, that OPF-IAL performs during each of its cycles.
The proposition also characterizes the number of inner iterations that the call to the PF-AR method performs during each of OPF-IAL's outer iterations.

\begin{proposition}\label{Prop Outer and Inner Complexities}
Let $\ell\geq 1$ be a cycle index generated by OPF-IAL. The following statements hold.
\begin{itemize}
\item[(a)] The $\ell$-th cycle of OPF-IAL performs at most 
\begin{equation}\label{Outer per Cycle}
        1+\left\lceil  \max\left\{8\Pi(p_0),\frac{8DI_{0}}{D_0} \right\}\right\rceil
\end{equation}
outer iterations, i.e., iterations indexed by k;
\item[(b)] During step~\ref{ARorFISTACall} of each of OPF-IAL's outer iterations, the PF-AR method performs at most
\begin{equation}
 \mathcal O\left(\sqrt{\left(\bar L+\frac{\|\cA\|^2}{\hat \epsilon}\right) \max\left\{\frac{D^2}{\hat \epsilon}, \frac{\Pi(p_0)D_0D}{I_{0}\hat \epsilon}, \frac{D}{\hat \rho} \right\}}\right) 
\end{equation}
gradient evaluations.
\end{itemize}
\end{proposition}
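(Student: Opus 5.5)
Both parts follow by combining the cycle mechanics of OPF-IAL with the bounds \Cref{DL bounds} and \Cref{dpl bounds} from the preceding lemma. Part (b) additionally uses the last conclusion of \Cref{PFARComplexity}, applied to the smoothness constant from \Cref{ARtranslation}(a).

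For part (a), I would read off from step~\ref{OPF7} that the $\ell$-th cycle restarts as soon as $k>\lceil 2I_{\ell-1}\rceil$. The cycle therefore performs at most $1+\lceil 2I_{\ell-1}\rceil$ outer iterations, whether it terminates earlier through the stopping criterion or is cut off by the restart test. Inserting the upper bound $I_{\ell-1}\le \max\{4\Pi(p_0),4DI_0/D_0\}$ from \Cref{dpl bounds} gives exactly \Cref{Outer per Cycle}. For $\ell=1$ the bound holds trivially, since $I_0\le 4DI_0/D_0$ because $D_0\le D$.

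For part (b), I would first note that \Cref{ARtranslation}(a) applies verbatim, so $\cL^s_c(\cdot,p_{k-1})$ is convex and $(\bar L+c\|\cA\|^2)$-smooth. With $c=1/\hat\epsilon$ this constant becomes $\bar L+\|\cA\|^2/\hat\epsilon$. The PF-AR input estimates $M_0$ and $D_0$ are chosen as in \Cref{remarkPFAR}, so the hypotheses of \Cref{PFARComplexity} hold, and its $\mathcal O(\sqrt{LD/\epsilon})$ conclusion applies with tolerance $\epsilon=\min\{\hat\epsilon/(4D_{\ell-1}),\hat\rho\}$. Since $D/\epsilon=\max\{4DD_{\ell-1}/\hat\epsilon,\,D/\hat\rho\}$, it remains to bound $D_{\ell-1}$ via \Cref{DL bounds}: $D_{\ell-1}\le\max\{4D,4\Pi(p_0)D_0/I_0\}$ (and $D_{\ell-1}=D_0\le D$ when $\ell=1$). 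This gives
\[
\frac{D}{\epsilon}\le \max\left\{\frac{16D^2}{\hat\epsilon},\frac{16\Pi(p_0)D_0D}{I_0\hat\epsilon},\frac{D}{\hat\rho}\right\},
\]
and substituting yields the stated bound once the constants are absorbed into $\mathcal O(\cdot)$.

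The logarithmic additive term in \Cref{ComplexityPFAR} also needs attention, namely $[6+\log_2(2\bar L_c/M_0)][1+\lceil\log_4(D/D_0)\rceil]$, where $\bar L_c$ is the smoothness constant of the AL subproblem. This is the step I expect to require the most care. I would handle it by observing that the term is independent of $\hat\epsilon$ apart from a logarithm, so it is dominated by the square-root term. Alternatively, and consistently with how \Cref{ARtranslation}(b) was stated, I would simply cite the last conclusion of \Cref{PFARComplexity}, which already packages the complexity as $\mathcal O(\sqrt{\bar LD/\epsilon})$. Beyond that, the only real subtlety is the index bookkeeping: the tolerance in cycle $\ell$ uses $D_{\ell-1}$, and the lemma bounds $D_\ell$ for all generated $\ell\ge1$, which together with $D_0$ covers every index used.
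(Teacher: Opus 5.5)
Your proposal is correct and follows the paper's proof essentially step for step. Part (a) reads off the bound $1+\lceil 2I_{\ell-1}\rceil$ from the restart test in step~\ref{OPF7} and inserts \Cref{dpl bounds}. Part (b) cites the last conclusion of \Cref{PFARComplexity} with smoothness constant $\bar L+\|\cA\|^2/\hat\epsilon$ and tolerance $\min\{\hat\epsilon/(4D_{\ell-1}),\hat\rho\}$, then bounds $D_{\ell-1}$ via \Cref{DL bounds}. Your separate treatment of $\ell=1$ (using $I_0\le 4DI_0/D_0$ and $D_0\le D$) is a small detail the paper leaves implicit.
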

\begin{proof}
(a) Let $\ell\geq 1$ be a cycle index generated by OPF-IAL. It follows from the inequality check that OPF-IAL performs in its step~\ref{OPF7} that 
its $\ell$-th cycle performs at most $1+\lceil 2 I_{\ell-1}\rceil $ outer iterations, i.e., iterations indexed by $k$. The
result then follows from this bound and the upper bound for $I_{\ell-1}$ in \Cref{dpl bounds}.

(b) It follows from the last conclusion of \Cref{PFARComplexity}, a similar argument as in \Cref{ARtranslation}(b), and the fact that the OPF-IAL method calls the 
PF-AR method in its step~\ref{ARorFISTACall} with tolerance pair $\epsilon= \min\left\{\frac{\hat \epsilon}{4
D_{\ell-1}}, \hat \rho\right\}$ that the PF-AR method performs at most
\begin{align}\label{intermediary bound}
\mathcal O\left(\sqrt{\left(\bar L+c\|\cA\|^2\right) \max\left\{\frac{D_{\ell-1}D}{c\hat \epsilon^2}, \frac{D}{\hat \rho} \right\}}\right)
\end{align}
gradient evaluations during each of its calls.
The conclusion of the proposition then follows from \Cref{intermediary bound}, the fact that $c=1/\hat \epsilon$, and
the upper bound for $D_{\ell-1}$ in \Cref{DL bounds}.
\end{proof}

We are now ready to prove \Cref{Main Complexity OPF-IAL}.

\begin{proof}[Proof of \Cref{Main Complexity OPF-IAL}]
It follows from \Cref{upper bound cycles Complexity} that OPF-IAL performs at most
\[1+\left \lceil \max \left\{\log_{2}\left(\frac{D}{D_0} \right), \log_{2}\left(\frac{\Pi(p_0)}{I_{0}} \right) \right\}\right \rceil\]
cycles to find a triple $(z,p,r)$ satisfying \Cref{ALsolutiontype}. Moreover, it follows from 
\Cref{Prop Outer and Inner Complexities}(a) that each cycle of OPF-IAL performs at most
\[1+\left\lceil  \max\left\{8\Pi(p_0),\frac{8DI_{0}}{D_0} \right\}\right\rceil\]
outer iterations.
Finally, \Cref{Prop Outer and Inner Complexities}(b) implies that the PF-AR method performs
at most 
\[\mathcal O\left(\sqrt{\left(\bar L+\frac{\|\cA\|^2}{\hat \epsilon}\right) \max\left\{\frac{D^2}{\hat \epsilon}, \frac{\Pi(p_0)D_0D}{I_{0}\hat \epsilon}, \frac{D}{\hat \rho} \right\}}\right) \]
gradient evaluations during each of OPF-IAL's outer iterations. The total complexity result of
OPF-IAL presented in \Cref{Main Complexity OPF-IAL} then follows from multiplying the
above three complexity bounds and absorbing the constants in the big O.
\end{proof}

\subsection{Last-Iterate Convergence of an Adaptive Parameter-Free AL Method}\label{APFIAL}
This subsection presents an adaptive parameter-free inexact augmented Lagrangian method, 
namely, APF-IAL, with last-iterate convergence guarantees for approximately solving the primal-dual pair \cref{LinearConstraints}
and \cref{dual}. Throughout this subsection, we consider problem \cref{LinearConstraints} 
under the following two assumptions.
\begin{assump}
\label{ass:A6} 
    The function $\psi_n$ is $M_n$-Lipschitz continuous on $\Dn$;
\end{assump}
\begin{assump}
\label{ass:A7} 
    There exists $\bar z \in \inte({\Dn})$ such that $\mathcal A\bar z=b$.
\end{assump}

Unlike O-IAL and OPF-IAL, APF-IAL achieves stronger last-iterate convergence rate
guarantees while the former two methods achieve best-iterate convergence rate guarantees. Like OPF-IAL,
APF-IAL is parameter-free and requires no knowledge of any parameters underlying \Cref{LinearConstraints}
and \Cref{dual}. However, unlike OPF-IAL and O-IAL, APF-IAL has the advantage that it is provably able to adaptively update the penalty parameter $c$
and the tolerance that it uses to approximately minimize the AL function at every iteration.
Finally given a tolerance pair $(\hat \epsilon,\hat \rho) \in \mathbb R_{++}$ and assuming that $\hat \epsilon=\hat \rho$, 
APF-IAL, like O-IAL and OPF-IAL, achieves an optimal complexity bound $\mathcal O(1/\hat
 \epsilon)$ for finding an approximate primal-dual pair of \Cref{LinearConstraints} and \Cref{dual} according to the
 criterion in \cref{ALsolutiontype}.

The APF-IAL method is formally presented in \Cref{alg:APF-IAL}
below.

\begin{algorithm}[H]
\caption{APF-IAL Method}
\label{alg:APF-IAL}
\begin{algorithmic}[1]
\REQUIRE
function pair $(\psi_s,\psi_n)$, initial points $z_0 \in {\Dn}$ and $p_0\in \R^{m}$, primal-dual tolerance pair $(\hat \epsilon,\hat \rho) \in \mathbb R^{2}_{++}$, an initial penalty parameter $c_1>0$, scalars $\alpha>1$ and $\omega>1$, a scalar $\tilde \epsilon_1\geq \hat \rho$, and smoothness and diameter estimates
$M_0:=\|\nabla \psi_s(z_0)-\nabla \psi_s(w_0)\|/(4\|z_0-w_0\|)$
and $D_0:=\|z_0-w_0\|$, where $w_0\in \Dn$ is chosen as in
\Cref{remarkPFAR}.\label{APFInput}
\STATE     set $k \gets 0$;\label{APF1}
\STATE set $\Resp \gets \mathcal{A}$$z_{k}-b$ and $\Resd\gets \infty$;\label{APF2}
\WHILE{$\|\Resp\| >  \hat \epsilon$ or $\|\Resd\|>\hat \rho$ (main outer loop)}\label{APF3}
\STATE set $k \gets k+1$;\label{APF4}
\IF{$\|\Resp\|>\hat \epsilon$}\label{APF5}
\STATE $\epsilon_k=\max\left\{\tilde \epsilon_k,\hat\rho\right\}$\label{APF6}
\ELSE \label{APF7}
\STATE $\epsilon_k=\hat \rho$\label{APF8}
\ENDIF\label{APF9}
\STATE call PF-AR method  (\Cref{alg:PF-AR})  with 
function pair  $(\cL^s_{c}(\cdot,p_{k-1}), \psi_n(\cdot))$, initial point $z_{k-1}$,\\
\qquad estimates $M_0$ and $D_0$, and
tolerance 
$\epsilon=\epsilon_k$, and let $(z_k,r_k)$ be its output\label{APF10}; 
\STATE set $\Resp\gets \mathcal Az_k-b$ and $\Resd\gets r_k$;\label{APF11}
\STATE update \label{APF12}
$p_k=p_{k-1}+c_k\Resp$;
\STATE set $c_{k+1}=\alpha c_k$ and $\tilde \epsilon_{k+1}=\tilde \epsilon_{k}/\omega$;\label{APF13}
\ENDWHILE\label{APF14}
\OUTPUT triple $(z,p,r):=(z_k,p_{k},r_k)$ satisfying
\cref{ALsolutiontype}.\label{APFOutput}
\end{algorithmic}
\end{algorithm}
We make a few remarks on APF-IAL. First, the while loop in steps~\ref{APF3}--\ref{APF14} terminates only when both primal feasibility and dual stationarity are small, namely, $\|\Resp\|\leq \hat \epsilon$ and $\|\Resd\|\leq \hat \rho$. Unlike O-IAL and OPF-IAL, APF-IAL does not maintain approximate dual stationarity at every outer iteration. Second, $\epsilon_k$ is the PF-AR tolerance used in step~\ref{APF10}: it is set to $\hat \rho$ once primal feasibility is sufficiently small, and otherwise to the decreasing adaptive tolerance $\tilde \epsilon_k$. Third, step~\ref{APF12} performs the standard multiplier update, as in O-IAL and OPF-IAL. Finally, step~\ref{APF13} updates the penalty and tolerance according to $c_{k+1}=\alpha c_k$ and $\tilde \epsilon_{k+1}=\tilde \epsilon_k/\omega$, where $\alpha,\omega>1$. Gradually increasing the penalty parameter is typically more practical than using a large fixed penalty, while still yielding an optimal convergence rate.


The following result describes the main complexity
result of APF-IAL. Its proof is given in \Cref{sect:proofalgD}.

\begin{theorem}\label{Complexity APF-IAL}
Let \begin{equation}\label{phi* and lambda bound and bar d}
 \bar{d}:=\dist(\bar z, {\bd(\Dn)}), \quad \nabla_{s}:=\sup_{z\in \Dn} \|\nabla \psi_s(z)\|
\end{equation}
where here $\bd(\Dn)$ denotes the boundary of $\dom \psi_n$.
APF-IAL performs at most
\begin{equation}\label{APFComplexityBound}
\mathcal O\left(\frac{\alpha^{3/2}\|\mathcal A\|D\sqrt{M_n+\nabla_s+\tilde \epsilon_1}}{\sqrt{\bar d\sigma^{+}_{\mathcal A}}(\sqrt{\alpha}-1)\sqrt{\hat \epsilon\hat \rho}}\right)
\end{equation}
total gradient evaluations to find an $(\hat \epsilon,\hat \rho)$-approximate
	solution according to the criterion in \cref{ALsolutiontype},
where
$(\hat \epsilon, \hat \rho)$, $\alpha$, and $c_1$ are inputs, $D$ and $M_n$ are as in 
\Cref{diameter,ass:A6},
respectively, and $\sigma_{\mathcal A}^{+}$ is the smallest positive
singular value of the linear transformation $\mathcal A$.
Hence, it follows that APF-IAL performs at most 
$\mathcal O(1/\hat \epsilon)$
total gradient evaluations to find an $(\hat \epsilon,\hat \epsilon)$-approximate solution of \Cref{LinearConstraints}. 
\end{theorem}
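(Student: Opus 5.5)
The argument has three steps: bound the multipliers uniformly, convert that bound into last-iterate feasibility through the growing penalty, and then sum the PF-AR costs as a geometric series.

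\textbf{Step 1: uniform multiplier bound.} We would show that $\|p_k\|\le B:=(M_n+\nabla_s+\tilde\epsilon_1)\|\cA\|D/(\bar d\,\sigma^+_{\cA})$, up to a constant, for every outer iteration $k$. The starting point is the inclusion produced in step~\ref{APF10}. Arguing exactly as in \Cref{ARtranslation}(b), using \Cref{gradient formula} with $c_k$ in place of $c$, we get
\[
r_k-\nabla\psi_s(z_k)-\cA^*p_k\in\partial\psi_n(z_k),\qquad \|r_k\|\le\epsilon_k\le\tilde\epsilon_1 .
\]
Write $p_k=p_k^R+p_k^N$ with $p_k^R\in\mathrm{Range}(\cA)$, and assume $p_0\in\mathrm{Range}(\cA)$ (or carry $p_0^N$ along as a constant). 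Since all updates lie in $\mathrm{Range}(\cA)$, the component $p_k^N$ stays fixed.

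Pick $u$ with $\cA^*p_k^R=-\|\cA^*p_k^R\|u$ and $\|u\|=1$. By \Cref{ass:A7}, the point $w=\bar z+\bar d\,u$ lies in $\Dn$. The subgradient inequality gives
\[
\psi_n(w)-\psi_n(z_k)\ge\inner{r_k-\nabla\psi_s(z_k)-\cA^*p_k}{w-z_k}.
\]
By \Cref{ass:A6} the left side is at most $M_nD$. For the right side, note $\inner{\cA^*p_k}{\bar z-z_k}=-\inner{p_k}{\cA z_k-b}$. The remaining terms are bounded by $(\tilde\epsilon_1+\nabla_s)(D+\bar d)$. The direction term contributes $\bar d\|\cA^*p_k^R\|\ge\bar d\sigma^+_{\cA}\|p_k^R\|$.

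This yields
\[
\bar d\sigma^+_{\cA}\|p_k^R\|\le (M_n+\nabla_s+\tilde\epsilon_1)(2D)-\inner{p_k}{\cA z_k-b}.
\]
The last term equals $-\inner{p_k}{(p_k-p_{k-1})/c_k}$. We then need an induction showing $\|p_k\|\le\max\{\|p_{k-1}\|,\;\mathrm{const}\}$. Use $-\inner{p_k}{p_k-p_{k-1}}\le \tfrac12(\|p_{k-1}\|^2-\|p_k\|^2)$: if $\|p_k\|>\|p_{k-1}\|$ this term is negative, and the bound on $\|p_k\|$ closes. I expect this to be the main obstacle, namely getting a clean induction and the stated dependence on $\|\cA\|$; the factor $\|\cA\|$ likely enters from a slightly different choice of test point, or from bounding $\|\cA z_k-b\|\le\|\cA\|D$.

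\textbf{Step 2: last-iterate feasibility.} From the multiplier update,
\[
\|\cA z_k-b\|=\|p_k-p_{k-1}\|/c_k\le 2B/c_k=2B/(c_1\alpha^{k-1}).
\]
Hence $\|\mathrm{Res}_p\|\le\hat\epsilon$ holds for all $k\ge K_1:=1+\lceil\log_\alpha(2B/(c_1\hat\epsilon))\rceil$, with no averaging needed.

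Once this holds, step~\ref{APF8} sets $\epsilon_k=\hat\rho$, so the next PF-AR output already satisfies $\|r_k\|\le\hat\rho$. The feasibility bound still holds at that iterate, so the method terminates by iteration $K_1+1$.

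\textbf{Step 3: total cost.} By \Cref{PFARComplexity} and \Cref{ARtranslation}(a), iteration $k$ costs
\[
\mathcal O\bigl(\sqrt{(\bar L+c_k\|\cA\|^2)D/\epsilon_k}\bigr)+\text{log terms},
\]
with $\epsilon_k\ge\hat\rho$. The dominant part is $\|\cA\|\sqrt{D c_1\alpha^{k-1}/\hat\rho}$, and summing this geometric series in $\sqrt\alpha$ up to $K_1+1$ gives
\[
\|\cA\|\sqrt{D/\hat\rho}\,\frac{\sqrt{\alpha}}{\sqrt\alpha-1}\sqrt{c_1\alpha^{K_1}}\lesssim \frac{\alpha^{3/2}\|\cA\|\sqrt{D B}}{(\sqrt\alpha-1)\sqrt{\hat\epsilon\hat\rho}} .
\]
Substituting the value of $B$ from Step 1 gives \Cref{APFComplexityBound}, after absorbing $\bar L$ and the logarithmic terms into lower-order contributions. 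Taking $\hat\rho=\hat\epsilon$ then yields the $\mathcal O(1/\hat\epsilon)$ bound.
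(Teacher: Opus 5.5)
Your argument follows the paper's proof. The test-point computation with $w=\bar z+\bar d u$ re-derives the multiplier lemma the paper imports from Sujanani et al. Your induction via $-\langle p_k,p_k-p_{k-1}\rangle\le\frac12(\|p_{k-1}\|^2-\|p_k\|^2)$ is exactly that lemma's part (b) together with the uniform bound on $\{p_k\}$. Step~2 is the paper's feasibility estimate $\|\mathcal A z_k-b\|\le(\|p_k\|+\|p_{k-1}\|)/c_k$, followed by a more direct version of its contradiction argument bounding $c_k$ and the number of outer iterations. Step~3 is the same geometric sum in $\sqrt\alpha$. Splitting off the $\mathrm{Null}(\mathcal A^*)$ component is, if anything, more careful than the paper: the lemma as quoted there tacitly needs $p$ in the range of $\mathcal A$ (or $\mathcal A$ surjective).

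There is one error you need to fix: $B$ must not contain $\|\mathcal A\|$. Your own derivation gives $\bar d\sigma^+_{\mathcal A}\|p_k^R\|\le 2D(M_n+\nabla_s+\tilde\epsilon_1)-\langle p_k^R,\mathcal A z_k-b\rangle$. After the induction this yields $\|p_k^R\|\le\max\{\|p_0^R\|,\,2D(M_n+\nabla_s+\tilde\epsilon_1)/(\bar d\sigma^+_{\mathcal A})\}$, with no factor $\|\mathcal A\|$. You need neither a different test point nor the estimate $\|\mathcal A z_k-b\|\le\|\mathcal A\|D$.

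The single $\|\mathcal A\|$ in the target bound comes only from the smoothness constant $\bar L+c_k\|\mathcal A\|^2$ of the AL subproblem, which you already use in Step~3. With the $B$ you wrote down, the substitution in Step~3 would produce $\|\mathcal A\|^{3/2}$ rather than the stated bound. Moreover, when $\|\mathcal A\|$ is small that $B$ is smaller than what you actually proved, so Steps~1--2 would not be justified. With the corrected $B$, your Step~3 gives exactly $2\alpha^{3/2}\|\mathcal A\|D\sqrt{M_n+\nabla_s+\tilde\epsilon_1}/\bigl((\sqrt\alpha-1)\sqrt{\bar d\sigma^+_{\mathcal A}\hat\epsilon\hat\rho}\bigr)$. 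This holds up to the $\|p_0\|$-, $c_1$-, and $K\sqrt{\bar L D/\hat\rho}$-terms that the theorem's $\mathcal O$ suppresses.
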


\subsubsection{Proof of \texorpdfstring{\Cref{Complexity APF-IAL}}{Lg}}
\label{sect:proofalgD}
This subsection is dedicated to proving \Cref{Complexity APF-IAL}.
The following key result from \cite{AL38Sujanani} is presented with minor changes in notation. It will be instrumental in showing that the sequence of Lagrange multipliers $\{p_k\}$ is bounded.

\begin{lemma}\label{lem:qbounds-2}
Assume that $\psi_n$ satisfies
\Cref{ass:A6}, $\mathcal A:\E \to \Rm$ is a linear transformation
satisfying \Cref{ass:A7}, and the triple $(z,p,s) \in \E \times \R^{m} \times \E$ satisfies
$s \in \partial \psi_n(z)+\mathcal A^{*}p$.
Then:
\begin{itemize}
\item[(a)] there holds
\begin{equation}\label{ineq:aux9001-2}
 \bar{d}\sigma_{\mathcal A}^{+}\|p\|
\leq  2 D\left(M_n + \|s\| \right)  - \inner{p}{\mathcal Az-b};
\end{equation}
\item[(b)] if, in addition, 
\begin{equation}\label{q form}
p=p^-+\chi(\mathcal Az-b)   
\end{equation}
for some $p^-\in \Rm$ and $\chi>0$, we have 
\begin{equation}\label{q bound-2}
 \|p\|\leq \max\left\{\|p^-\|,\frac{2D(M_n+\|s\|)}{\bar d \sigma^{+}_{\mathcal A}} \right\}.
\end{equation}
\end{itemize}
\end{lemma}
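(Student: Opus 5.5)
For part (a), the plan is to exploit the interior point $\bar z$ from \Cref{ass:A7} by testing the subgradient inclusion against points near $\bar z$. Write $s = g + \mathcal A^* p$ with $g\in\partial\psi_n(z)$. For any $u\in\Dn$ the subgradient inequality gives
\[
\psi_n(u)-\psi_n(z)\ \geq\ \inner{s-\mathcal A^*p}{u-z}.
\]
Using \Cref{ass:A6}, the left side is at most $M_n\|u-z\|\le M_nD$. Rearranging yields
\[
\inner{p}{\mathcal A(u-z)}\ \geq\ \inner{s}{u-z}-M_nD\ \geq\ -D(\|s\|+M_n).
\]

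Next I will choose $u=\bar z - t\,\mathcal A^*p/\|\mathcal A^*p\|$ with $t<\bar d$, letting $t\uparrow\bar d$ at the end. This $u$ lies in $\Dn$ because $\bar z$ is at distance $\bar d$ from the boundary. With this choice, $\mathcal A u = b - t\,\mathcal A\mathcal A^*p/\|\mathcal A^*p\|$, and hence
\[
\inner{p}{\mathcal A(u-z)} = -t\|\mathcal A^*p\| - \inner{p}{\mathcal Az-b}.
\]
Combining this with the previous inequality gives
\[
\bar d\,\|\mathcal A^*p\|\ \le\ D(M_n+\|s\|) - \inner{p}{\mathcal Az-b}.
\]

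It remains to pass from $\|\mathcal A^*p\|$ to $\sigma_{\mathcal A}^+\|p\|$, and this is the main obstacle. The bound $\|\mathcal A^*p\|\ge\sigma^+_{\mathcal A}\|p\|$ holds only for $p\in\operatorname{range}(\mathcal A)$. So I decompose $p=p_R+p_N$ with $p_N\in\ker\mathcal A^*$. Since $\mathcal Az-b=\mathcal A(z-\bar z)\in\operatorname{range}\mathcal A$, we get $\inner{p_N}{\mathcal Az-b}=0$, so both sides of the inequality depend on $p$ only through $p_R$.

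To control $p_N$ itself I would handle it separately. Either the lemma is intended with $p$ replaced by its range component, or $p$ is assumed to lie in the range, which holds for the iterates if $p_0$ does. If neither is available, I would accept the factor $2$ in front of $D(M_n+\|s\|)$ as slack and note the range condition. This is where I expect the statement from \cite{AL38Sujanani} to rely on the range restriction.

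For part (b), substitute $\mathcal Az-b=(p-p^-)/\chi$. Then
\[
-\inner{p}{\mathcal Az-b} = -\tfrac1\chi\left(\|p\|^2-\inner{p}{p^-}\right) \le \tfrac1\chi\|p\|\left(\|p^-\|-\|p\|\right).
\]
If $\|p\|\le\|p^-\|$, the bound in (b) holds trivially. Otherwise this term is negative, and (a) gives $\bar d\sigma^+_{\mathcal A}\|p\|\le 2D(M_n+\|s\|)$, which yields the second entry of the max.
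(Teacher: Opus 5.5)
Your test-point argument is correct, and once $p\in\operatorname{range}\mathcal A$ it is complete. Taking $u=\bar z-t\,\mathcal A^*p/\|\mathcal A^*p\|$ with $t\uparrow\bar d$ gives $\bar d\|\mathcal A^*p\|\le D(M_n+\|s\|)-\inner{p}{\mathcal Az-b}$, which is sharper than (a) by a factor of $2$ in the constant term. Your derivation of (b) from (a) via Cauchy--Schwarz is also fine. You should state two small points explicitly. First, $z\in\Dn$ because $\partial\psi_n(z)\neq\emptyset$, and this is what justifies $\|u-z\|\le D$. Second, if $\mathcal A^*p=0$ with $p\in\operatorname{range}\mathcal A$, then $p=0$ and (a) is trivial. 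The paper gives no proof of its own; it imports the lemma, and the printed statement contains no range condition on $p$.

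The step you flagged as the obstacle cannot be closed, and your fallback of "accepting the factor $2$ as slack" does not work. Write $p=p_R+p_N$ with $p_N\in\ker\mathcal A^*$. The right-hand side of (a) does not depend on $p_N$ at all, while $\|p\|$ grows without bound in $\|p_N\|$. In fact, the lemma as printed, with $p\in\mathbb R^m$ arbitrary, is false whenever $\mathcal A$ is not surjective. Take $\E=\mathbb R$, $\mathcal Az=(z,0)\in\mathbb R^2$, $b=0$, and $\psi_n=\iota_{[-1,1]}$, so that $M_n=0$, $D=2$, $\bar z=0$, $\bar d=1$, and $\sigma^+_{\mathcal A}=1$.
\begin{itemize}
\item For (a), choose $z=0$, $p=(0,t)$, and $s=0\in\partial\psi_n(0)+\mathcal A^*p$. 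Then (a) reads $|t|\le 0$.
\item For (b), choose $z=1$, $p^-=(0,t)$, and $\chi=1$, so $p=(1,t)$ and $s=1\in[0,\infty)+1=\partial\psi_n(1)+\mathcal A^*p$. Then (b) reads $\sqrt{1+t^2}\le\max\{|t|,4\}$, which is false for every $|t|\ge 4$.
\end{itemize}

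Rather than hedging, you should add the hypothesis $p\in\operatorname{range}\mathcal A$, or surjectivity of $\mathcal A$; under it your proof is complete. In (b) this is equivalent to $p^-\in\operatorname{range}\mathcal A$, since $\mathcal Az-b=\mathcal A(z-\bar z)$. Without it, the correct conclusion has $\|p_R\|$ in place of $\|p\|$.

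For the paper's use in \Cref{PK Bounded} this restriction is harmless. There $p_k-p_0\in\operatorname{range}\mathcal A$, so the hypothesis holds whenever $p_0\in\operatorname{range}\mathcal A$ (e.g.\ $p_0=0$). Otherwise, apply the lemma to the orthogonal projections $Pp_k$ onto $\operatorname{range}\mathcal A$. These satisfy $\mathcal A^*Pp_k=\mathcal A^*p_k$ and $Pp_k=Pp_{k-1}+c_k(\mathcal Az_k-b)$, so $\|Pp_k\|$ obeys the same bound. The feasibility estimate of \Cref{Feasibility Proposition} is then unchanged, because $\mathcal Az_k-b=(Pp_k-Pp_{k-1})/c_k$.
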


The following proposition characterizes the properties and the output of
the call made to the PF-AR method during the $k$-th iteration of APF-IAL.
We omit the proof of the proposition since it is nearly identical to the
proof of \Cref{ARtranslation} with the only differences being that the static penalty parameter $c$ is replaced with the adaptive penalty parameter $c_k$ and that the quantity $\min\left\{(c\hat \epsilon^2)/4D,\hat \rho\right\}$ is now replaced by $\epsilon_k$ since the PF-AR method is called in APF-IAL with the latter tolerance.

\begin{prop}\label{AR adaptive}
    The following statements about the $k$-th iteration of APF-IAL hold:
    \begin{itemize}
        \item[(a)] the call made to the PF-AR method in step~\ref{APF10} outputs a pair $(z_k,r_k)$  satisfying
        \begin{equation}\label{rk AR adaptive}
          r_{k}\in \nabla \psi_s(z_k)+\partial\psi_n(z_k)+\mathcal A^{*}p_k, \quad \|r_k\|\leq \epsilon_k;
        \end{equation}
        \item[(b)] the call made to the PF-AR method performs at most 
\begin{align*}
 \mathcal O\left(\sqrt{\frac{\left(\bar L+c_k\|\mathcal A\|^2\right)D}{\epsilon_k}}\right) 
\end{align*}
gradient evaluations to find a pair $(z_k,r_k)$ satisfying \cref{rk AR adaptive}.
\end{itemize}
\end{prop}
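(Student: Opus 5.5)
The plan is to follow the proof of \Cref{ARtranslation} almost verbatim, replacing the fixed penalty $c$ by the current penalty $c_k$ and the O-IAL tolerance by $\epsilon_k$. One point needs care in the indexing: in step~\ref{APF10} the PF-AR call uses the function pair $(\cL^s_{c_k}(\cdot,p_{k-1}),\psi_n)$. The penalty must be $c_k$ so that it matches the multiplier update $p_k=p_{k-1}+c_k(\cA z_k-b)$ in step~\ref{APF12}, and I read the step's ``$c$'' in this way.

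\textbf{Smoothness of the subproblem.} I first record, exactly as in \Cref{ARtranslation}(a), that $\cL^s_{c_k}(\cdot,p_{k-1})$ is convex and $(\bar L+c_k\|\cA\|^2)$-smooth. Convexity holds because $\psi_s$ is convex and the remaining terms are affine plus a convex quadratic. For smoothness, the triangle inequality gives
\[
\|\nabla\cL^s_{c_k}(z;p_{k-1})-\nabla\cL^s_{c_k}(z';p_{k-1})\|\le \bar L\|z-z'\|+c_k\|\cA^*\cA(z-z')\|\le(\bar L+c_k\|\cA\|^2)\|z-z'\|.
\]

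\textbf{Part (a).} By \Cref{PFARComplexity}, the PF-AR output $(z_k,r_k)$ satisfies
\[
r_k\in\nabla\cL^s_{c_k}(z_k;p_{k-1})+\partial\psi_n(z_k),\qquad \|r_k\|\le\epsilon_k .
\]
The gradient identity
\[
\nabla\cL^s_{c_k}(z_k;p_{k-1})=\nabla\psi_s(z_k)+\cA^*\bigl(p_{k-1}+c_k(\cA z_k-b)\bigr)=\nabla\psi_s(z_k)+\cA^*p_k
\]
follows from the update in step~\ref{APF12}. Substituting it into the inclusion gives \cref{rk AR adaptive}.

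\textbf{Part (b).} I apply the complexity bound \Cref{ComplexityPFAR} of \Cref{PFARComplexity} with smoothness constant $\bar L+c_k\|\cA\|^2$ and tolerance $\epsilon_k$. This requires checking its hypotheses $M_0\le 2(\bar L+c_k\|\cA\|^2)$ and $D_0\le D$.
\begin{itemize}
\item The inequality $D_0\le D$ holds by \Cref{remarkPFAR}.
\item The input $M_0$ is computed from $\psi_s$ rather than from the AL function, so $M_0\le 2\bar L\le 2(\bar L+c_k\|\cA\|^2)$.
\end{itemize}
With the hypotheses verified, the dominant term of the bound is $8\sqrt7C_1\sqrt{(\bar L+c_k\|\cA\|^2)D/\epsilon_k}$. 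The additive logarithmic terms depend only on $\bar L,\|\cA\|,c_1,M_0,D,D_0$; they are absorbed into the $\mathcal O$, which yields the stated estimate.

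\textbf{Main obstacle.} The only delicate step is the absorption of those additive logarithmic terms. Because $c_k=\alpha^{k-1}c_1$ grows, the factor $\log_2(2(\bar L+c_k\|\cA\|^2)/M_0)$ grows linearly in $k$. It is still dominated by the square-root term, since $\sqrt{c_k}$ grows geometrically. A little extra care is needed to make this domination hold uniformly in $k$.
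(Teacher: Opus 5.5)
Your proposal is correct and follows the paper's route. The paper omits this proof, saying only that it is the proof of \Cref{ARtranslation} with $c$ replaced by $c_k$ and the tolerance replaced by $\epsilon_k$, which is exactly what you carry out, including reading the penalty in step~\ref{APF10} as $c_k$. Your explicit check of the hypotheses $M_0\le 2(\bar L+c_k\|\cA\|^2)$ and $D_0\le D$ of \Cref{PFARComplexity}, and your discussion of absorbing the logarithmic terms, go beyond what the paper writes. The uniform-in-$k$ domination you flag follows from $\log_2 t\le 2\sqrt{t}$ together with $\epsilon_k\le\tilde\epsilon_1$ from \Cref{Bound on EpsilonK}, which gives a $k$-independent constant.
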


The following result provides upper and lower bounds on the sequence of tolerances $\{\epsilon_k\}$ generated in steps \ref{APF5} to \ref{APF9} of APF-IAL.

\begin{lemma}\label{Bound on EpsilonK}
For any iteration index $k \geq 1$ generated by APF-IAL, we have
\begin{equation}\label{Epsilon Sequence Decay}
       \hat \rho \leq \epsilon_k \leq \tilde \epsilon_1.
\end{equation}
\end{lemma}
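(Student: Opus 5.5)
The bound follows directly from the two branches in steps~\ref{APF5}--\ref{APF9} of APF-IAL, together with the update $\tilde\epsilon_{k+1}=\tilde\epsilon_k/\omega$ in step~\ref{APF13}. The plan is a case split on which branch defines $\epsilon_k$ at iteration $k$. First, I record a monotonicity fact about the auxiliary sequence. Since $\omega>1$ and $\tilde\epsilon_1>0$, an induction on step~\ref{APF13} gives
\[
\tilde\epsilon_k=\tilde\epsilon_1\,\omega^{-(k-1)}\leq \tilde\epsilon_1 \qquad \text{for all } k\geq 1.
\]
I also use the input requirement $\tilde\epsilon_1\geq\hat\rho$ from step~\ref{APFInput}.

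In the first case, $\|\Resp\|>\hat\epsilon$ at the start of iteration $k$, so step~\ref{APF6} sets $\epsilon_k=\max\{\tilde\epsilon_k,\hat\rho\}$. The lower bound $\epsilon_k\geq\hat\rho$ holds by definition of the maximum. For the upper bound, both arguments of the maximum are at most $\tilde\epsilon_1$: the first by the monotonicity fact above, and the second by the input condition $\hat\rho\leq\tilde\epsilon_1$. Hence $\epsilon_k\leq\tilde\epsilon_1$.

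In the second case, step~\ref{APF8} sets $\epsilon_k=\hat\rho$. The lower bound holds with equality, and the upper bound is again the input condition $\hat\rho\leq\tilde\epsilon_1$.

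Combining the two cases yields \Cref{Epsilon Sequence Decay} for every iteration index $k\geq 1$ that APF-IAL generates. I expect no real obstacle here. The only points to check are that the algorithm's input really requires $\tilde\epsilon_1\geq\hat\rho$, which it does, and that $\tilde\epsilon_k$ is nonincreasing, which relies on $\omega>1$.
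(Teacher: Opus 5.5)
Your proposal is correct and follows essentially the same argument as the paper. You split on the two branches defining $\epsilon_k$ (either $\max\{\tilde\epsilon_k,\hat\rho\}$ or $\hat\rho$), then use that $\tilde\epsilon_k$ is decreasing because $\omega>1$, together with the input condition $\tilde\epsilon_1\geq\hat\rho$.
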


\begin{proof}
It follows from the way $\epsilon_k$ is set in
steps \ref{APF5} to \ref{APF9} of APF-IAL
that $\epsilon_{k}=\max\{\tilde \epsilon_k,\hat \rho\}$ or $\epsilon_{k}=\hat \rho$, which immediately implies the first bound in \Cref{Epsilon Sequence Decay}.
The second bound in \Cref{Epsilon Sequence Decay} follows immediately from this observation, the fact that $\tilde \epsilon_{k}$ is a decreasing sequence, and the fact that $\tilde \epsilon_{1}\geq \hat \rho$.
\end{proof}

\begin{prop}\label{PK Bounded} 
The sequence  $\{p_k\}$ generated by APF-IAL satisfies 
\begin{equation} \label{P Bound}
\|p_k\|\leq \max\left\{\frac{2D(M_n+\nabla_s+\tilde\epsilon_1)}{\bar d \sigma^{+}_{\mathcal A}}, \|p_0\|\right\}\quad \forall k\geq 0.
\end{equation}
\end{prop}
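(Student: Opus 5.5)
The bound follows by induction on $k$, applying \Cref{lem:qbounds-2}(b) at every outer iteration of APF-IAL. Set
\[
B:=\frac{2D(M_n+\nabla_s+\tilde\epsilon_1)}{\bar d\,\sigma^{+}_{\mathcal A}} .
\]
For $k=0$ the claim $\|p_0\|\leq \max\{B,\|p_0\|\}$ is trivial. Fix $k\geq 1$ and assume $\|p_{k-1}\|\leq \max\{B,\|p_0\|\}$.

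First, we put the PF-AR output into the form required by \Cref{lem:qbounds-2}. By \Cref{AR adaptive}(a), $r_k\in \nabla\psi_s(z_k)+\partial\psi_n(z_k)+\mathcal A^*p_k$ with $\|r_k\|\leq \epsilon_k$. Define $s_k:=r_k-\nabla\psi_s(z_k)$. Then
\[
s_k\in \partial\psi_n(z_k)+\mathcal A^*p_k .
\]
Since $z_k\in\Dn$, the definition of $\nabla_s$ in \Cref{phi* and lambda bound and bar d} together with \Cref{Bound on EpsilonK} gives
\[
\|s_k\|\leq \|r_k\|+\|\nabla\psi_s(z_k)\|\leq \tilde\epsilon_1+\nabla_s .
\]
The multiplier update in step~\ref{APF12} reads $p_k=p_{k-1}+c_k(\mathcal Az_k-b)$ with $c_k>0$. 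This is exactly the form \Cref{q form} with $p^-=p_{k-1}$ and $\chi=c_k$.

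Second, we apply the lemma. \Cref{ass:A6} and \Cref{ass:A7} hold, so \Cref{lem:qbounds-2}(b) with $(z,p,s)=(z_k,p_k,s_k)$ yields
\[
\|p_k\|\leq \max\left\{\|p_{k-1}\|,\frac{2D(M_n+\|s_k\|)}{\bar d\sigma^+_{\mathcal A}}\right\}\leq \max\left\{\|p_{k-1}\|,B\right\}.
\]
By the induction hypothesis the right-hand side is at most $\max\{B,\|p_0\|\}$, which closes the induction.

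The only delicate step is handling the smooth part $\nabla\psi_s$. \Cref{lem:qbounds-2} involves only $\partial\psi_n$, so $\nabla\psi_s(z_k)$ must be absorbed into $s_k$. This works because $z_k\in\Dn$, which holds since PF-AR outputs lie in $\dom\psi_n$, and because $\Dn$ is compact and $\nabla\psi_s$ is continuous, so $\nabla_s$ is finite. The uniform tolerance bound $\epsilon_k\leq\tilde\epsilon_1$ from \Cref{Bound on EpsilonK} then makes the bound independent of $k$.
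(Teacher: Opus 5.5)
Your proposal is correct and follows essentially the same argument as the paper. You absorb $\nabla\psi_s(z_k)$ into $s=r_k-\nabla\psi_s(z_k)$, bound $\|s\|\leq \tilde\epsilon_1+\nabla_s$ via $\epsilon_k\leq\tilde\epsilon_1$, apply part (b) of the multiplier-bound lemma with $p^-=p_{k-1}$ and $\chi=c_k$, and close by induction from the trivial case $k=0$.
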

\begin{proof}
It follows from the definition of $\nabla_s$ in \cref{phi* and lambda bound and bar d}, the inequality in \cref{rk AR adaptive}, the second relation in \cref{Epsilon Sequence Decay}, and triangle inequality that the following inequality holds
\begin{equation}\label{Bound on Residual}
\|r_k-\nabla \psi_s(z_k)\|\leq \|r_k\|+\|\nabla\psi_s(z_k)\|\overset{\cref{rk AR adaptive}}{\leq} \epsilon_k+\|\nabla \psi_s(z_k)\|\overset{\cref{Epsilon Sequence Decay}}{\leq} \tilde \epsilon_1+\|\nabla \psi_s(z_k)\|\overset{\cref{phi* and lambda bound and bar d}}{\leq} \tilde \epsilon_1+\nabla_s
\end{equation}
for all $k \geq 1$.
Now, it follows from the above inequality, the fact that the inclusion
in \cref{rk AR adaptive} implies that $r_k-\nabla \psi_s(z_k)\in
\partial \psi_n(z_k)+\mathcal A^{*}p_k$, and \Cref{lem:qbounds-2}(b) with $(z,p,s)=(z_k,p_k,r_k-\nabla \psi_s(z_k))$
that the following holds
\begin{align*}
\|p_k\|\overset{\cref{q bound-2}}{\leq} \max\left\{\|p_{k-1}\|,\frac{2D(M_n+\|r_k-\nabla \psi_s(z_k)\|)}{\bar d \sigma^{+}_{\mathcal A}} \right\}\overset{\cref{Bound on Residual}}{\leq}\max\left\{\|p_{k-1}\|, \frac{2D(M_n+ \nabla_s+\tilde \epsilon_1)}{\bar d \sigma^{+}_{\mathcal A}}\right\}
\end{align*}
for all $k \geq 1$.
The conclusion of the proposition then follows immediately from the
above relation, the fact that inequality \cref{P Bound} is satisfied with $k=0$, and an induction argument.
\end{proof}

The following proposition bounds the primal feasibility of the iterates generated by APF-IAL.

\begin{prop}\label{Feasibility Proposition}
    The following relation holds for any iteration index $k\geq 1$ generated by APF-IAL:
    \begin{equation}\label{Feasibility Adaptive}
        \|\mathcal A z_k-b\|\leq \max\left\{\frac{4D(M_n+\nabla_s+\tilde \epsilon_1)}{c_k\bar d \sigma^{+}_{\mathcal A}}, \frac{2\|p_0\|}{c_k}\right\}.
    \end{equation}
\end{prop}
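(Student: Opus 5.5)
The plan is to read the bound directly off the multiplier update in step~\ref{APF12} together with the uniform multiplier bound of \Cref{PK Bounded}. The feasibility residual of the current iterate is expressed as a difference of two consecutive multipliers divided by the current penalty parameter. Since consecutive multipliers are uniformly bounded, the residual is of order $1/c_k$.

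Fix an iteration index $k\geq 1$. By step~\ref{APF12}, $p_k=p_{k-1}+c_k(\mathcal A z_k-b)$. Hence
\[
\mathcal A z_k-b=\frac{p_k-p_{k-1}}{c_k},
\qquad\text{so}\qquad
\|\mathcal A z_k-b\|\leq \frac{\|p_k\|+\|p_{k-1}\|}{c_k}
\]
by the triangle inequality. Set
\[
B:=\max\left\{\frac{2D(M_n+\nabla_s+\tilde\epsilon_1)}{\bar d\sigma^+_{\mathcal A}},\ \|p_0\|\right\}.
\]
\Cref{PK Bounded} gives $\|p_j\|\leq B$ for all $j\geq 0$, in particular for $j=k$ and $j=k-1$; the case $k-1=0$ is covered since that bound holds for every $j\geq 0$. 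Therefore $\|\mathcal A z_k-b\|\leq 2B/c_k$.

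Distributing the factor $2$ inside the maximum gives
\[
\frac{2B}{c_k}=\max\left\{\frac{4D(M_n+\nabla_s+\tilde\epsilon_1)}{c_k\bar d\sigma^+_{\mathcal A}},\ \frac{2\|p_0\|}{c_k}\right\},
\]
which is exactly \cref{Feasibility Adaptive}.

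There is no real obstacle once \Cref{PK Bounded} is available; the substantive work, namely bounding $\{p_k\}$ uniformly via \Cref{lem:qbounds-2}(b) under \Cref{ass:A6,ass:A7}, is already done there. The only points needing care are that the same $c_k$ appears in both the update and the bound, and that the multiplier bound is applied at both indices $k$ and $k-1$ rather than only at the current one.
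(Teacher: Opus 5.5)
Your proposal is correct and follows the paper's proof essentially verbatim: write $\mathcal A z_k-b=(p_k-p_{k-1})/c_k$ from the multiplier update, apply the triangle inequality, and bound both $\|p_k\|$ and $\|p_{k-1}\|$ by the uniform multiplier bound of \Cref{PK Bounded}. Your check that this bound also covers the index $k-1=0$ is the right detail to verify.
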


\begin{proof}
    The update rule for the dual variable $p_k$ in step~\ref{APF12}, triangle inequality, and relation \Cref{P Bound} imply that the following relation holds for any iteration index $k \geq 1$:
\begin{align*}
    \|\mathcal Az_k-b\|&=\frac{1}{c_k}\|p_k-p_{k-1}\|\leq \frac{1}{c_k}\left(\|p_k\|+\|p_{k-1}\| \right) \overset{\cref{P Bound}}{\leq} \max\left\{\frac{4D(M_n+\nabla_s+\tilde \epsilon_1)}{c_k\bar d \sigma^{+}_{\mathcal A}}, \frac{2\|p_0\|}{c_k}\right\},
\end{align*}
from which the result of the proposition immediately follows.
\end{proof}
The following proposition provides an upper bound on the sequence of penalty parameters, $c_k$, generated by APF-IAL.

\begin{prop}\label{Penalty Bound Prop}
    For any iteration index $k\geq 1$ generated by APF-IAL, we have
\begin{equation}\label{Bound Penalty}
    c_k \leq \max\left\{\alpha c_1, \frac{4\alpha^2 D(M_n+\nabla_s+\tilde \epsilon_1)}{\hat \epsilon\bar d\sigma^{+}_{\mathcal A}}, \frac{2\alpha^2\|p_0\|}{\hat \epsilon}\right\}
\end{equation}
where $\hat \epsilon>0$ is the input tolerance to APF-IAL and $c_1$ is the initial penalty parameter.
\end{prop}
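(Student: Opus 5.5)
The plan is to argue by contradiction on the geometric growth $c_{k+1}=\alpha c_k$. Set $\Gamma:=\max\{4D(M_n+\nabla_s+\tilde\epsilon_1)/(\bar d\sigma^{+}_{\mathcal A}),\,2\|p_0\|\}$. With this notation, \Cref{Feasibility Proposition} reads $\|\mathcal Az_k-b\|\le \Gamma/c_k$ for every $k\ge1$. The key step is to show that, once $c_k\ge \Gamma/\hat\epsilon$, the iterate $z_k$ is primal feasible to tolerance $\hat\epsilon$. Then the next iteration uses the tolerance $\epsilon_{k+1}=\hat\rho$ (steps~\ref{APF5}--\ref{APF9}), so by \Cref{AR adaptive}(a) it produces $\|r_{k+1}\|\le\hat\rho$. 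Because $c_{k+1}=\alpha c_k\ge\Gamma/\hat\epsilon$ as well, the same iteration also has $\|\mathcal Az_{k+1}-b\|\le\hat\epsilon$. Hence the while-loop condition in step~\ref{APF3} fails after iteration $k+1$, and the algorithm terminates.

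To turn this into the bound \Cref{Bound Penalty}, let $k$ be any generated iteration index with $k\ge2$; the case $k=1$ is trivial since $c_1\le\alpha c_1$. Since iteration $k$ was entered, the loop test failed at the end of iteration $k-1$. By the argument above, this forces $c_{k-2}<\Gamma/\hat\epsilon$ whenever $k-2\ge1$: if instead $c_{k-2}\ge \Gamma/\hat\epsilon$, then iterations $k-2$ and $k-1$ would both be $\hat\epsilon$-feasible and iteration $k-1$ would have dual residual at most $\hat\rho$, so the loop would have stopped before iteration $k$. Consequently
\[
c_k=\alpha^2 c_{k-2}<\alpha^2\Gamma/\hat\epsilon .
\]
If $k=2$, then directly $c_2=\alpha c_1$. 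Combining the two cases and expanding $\Gamma$ (the maximum of two terms, multiplied by $\alpha^2/\hat\epsilon$) gives exactly the three-term maximum in \Cref{Bound Penalty}.

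The main care point is the bookkeeping of which residual governs $\epsilon_k$. In step~\ref{APF5}, $\Resp$ still holds $\mathcal Az_{k-1}-b$, the residual from the previous iteration, so the choice $\epsilon_{k}=\hat\rho$ depends on feasibility of $z_{k-1}$, not of $z_k$. This is why two consecutive large penalties are needed, and why the factor $\alpha^2$ appears rather than $\alpha$. I will verify the index shift carefully against the algorithm listing. I will also confirm that the feasibility bound of \Cref{Feasibility Proposition} holds for every $k\ge1$ regardless of which tolerance was used, which is true because it relies only on \Cref{PK Bounded}, and that bound uses only $\epsilon_k\le\tilde\epsilon_1$.
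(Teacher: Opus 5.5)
Your proposal is correct and follows essentially the same argument as the paper: a contradiction via $c_k=\alpha^2 c_{k-2}$, applying \Cref{Feasibility Proposition} at iterations $k-2$ and $k-1$, and using that $\epsilon_{k-1}=\hat\rho$ once $\|\mathcal Az_{k-2}-b\|\le\hat\epsilon$, so the loop must already stop after iteration $k-1$. The only cosmetic difference is that the paper first splits on whether $\|r_{k-2}\|\le\hat\rho$. You instead observe that iteration $k-1$ was generated and therefore used tolerance $\hat\rho$, which makes that case split unnecessary.
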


\begin{proof}
First, observe that relation \cref{Bound Penalty} holds for $k=1$ and $k=2$ since $\alpha>1$ and $c_2=\alpha c_1$. 
Suppose by contradiction that APF-IAL generates an iteration index $k\geq 3$ such that 
\begin{equation}\label{Contradiction}
c_k>\max\left\{\alpha c_1, \frac{4\alpha^2 D(M_n+\nabla_s+\tilde\epsilon_1)}{\hat \epsilon\bar d\sigma^{+}_{\mathcal A}}, \frac{2\alpha^2\|p_0\|}{\hat \epsilon}\right\}.
\end{equation}
It then follows from this relation and the way that $c_k$ is updated in step~\ref{APF13} of APF-IAL that
\begin{align}\label{Penalty contradiction}
c_{k-2}=\frac{c_k}{\alpha^2}>\max\left\{\frac{4D(M_n+\nabla_s+\tilde \epsilon_1)}{\hat \epsilon\bar d\sigma^{+}_{\mathcal A}}, \frac{2\|p_0\|}{\hat \epsilon}\right\}.
\end{align}
This relation together with relation \cref{Feasibility Adaptive} then implies that 
\begin{align}\label{Feas 2}
\|\mathcal Az_{k-2}-b\|\overset{\cref{Feasibility Adaptive}}{\leq} \frac{4D(M_n+\nabla_s+\tilde \epsilon_1)}{c_{k-2}\bar d \sigma^{+}_{\mathcal A}} < \hat \epsilon.
\end{align}
If $\|r_{k-2}\|\leq \hat \rho$, it follows from the above relation
that APF-IAL must terminate in its $(k-2)$-th iteration, which
contradicts the fact that APF-IAL generated the iteration index $k-1$.
For the other situation, suppose now that $\|r_{k-2}\|>\hat \rho$.
It then follows from relation \cref{Feas 2} and the way $\epsilon_{k-1}$
is set during steps \ref{APF5} to \ref{APF9} of APF-IAL that $\epsilon_{k-1}=\hat \rho$.
From \cref{rk AR adaptive}, we get that $\|r_{k-1}\|\leq \hat \rho$. Moreover, 
it follows from relation \cref{Penalty contradiction}, the update rule for $c_{k}$ in step~\ref{APF13} of APF-IAL, and the fact that $\alpha>1$ that $c_{k-1}=\alpha c_{k-2}$ that 
\[c_{k-1}>\max\left\{\frac{4D(M_n+\nabla_s+\tilde \epsilon_1)}{\hat \epsilon\bar d\sigma^{+}_{\mathcal A}}, \frac{2\|p_0\|}{\hat \epsilon}\right\}.\] 
Hence, this relation together with relation \cref{Feasibility Adaptive} implies that 
\begin{align}\label{Feas Again}
    \|\mathcal A z_{k-1}-b\| \overset{\cref{Feasibility Adaptive}}{\leq} \max\left\{\frac{4D(M_n+\nabla_s+\tilde \epsilon_1)}{c_{k-1}\bar d \sigma^{+}_{\mathcal A}}, \frac{2\|p_0\|}{c_{k-1}}\right\} <  \hat \epsilon.
\end{align}
It follows from relation \cref{Feas Again} and the fact 
that $\|r_{k-1}\|\leq \hat \rho$ that APF-IAL must terminate in its $(k-1)$-th iteration, 
which contradicts the fact that APF-IAL generated 
the iteration index $k$ satisfying relation \cref{Contradiction}. Hence, relation \cref{Bound Penalty} must hold for all iteration indices $k$ generated by APF-IAL.
\end{proof}

\begin{prop}
    The number of outer iterations performed by APF-IAL is at most 
 \begin{equation}\label{Outer Iterations}
    K:=2+\left\lceil\log_{\alpha}^{+} \left(\max\left\{\frac{4\alpha^2 D(M_n+\nabla_s+\tilde\epsilon_1)}{\hat \epsilon (c_1\bar d\sigma^{+}_{\mathcal A})}, \frac{2\alpha^2\|p_0\|}{\hat \epsilon c_1}\right\}\right)\right\rceil
\end{equation}
where $\hat \epsilon>0$ is the input tolerance to APF-IAL and $c_1$ is the initial penalty parameter.
\end{prop}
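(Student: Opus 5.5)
The bound on the number of outer iterations follows by combining \Cref{Penalty Bound Prop} with the geometric growth of the penalty parameters. Step~\ref{APF13} sets $c_{k+1}=\alpha c_k$, so every generated iteration index $k\geq 1$ satisfies $c_k=\alpha^{k-1}c_1$. We pass from a bound on $c_k$ to a bound on $k$ by taking logarithms base $\alpha$.

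Let $k$ be any iteration index generated by APF-IAL. \Cref{Penalty Bound Prop} gives
\[
\alpha^{k-1}c_1=c_k\leq \max\left\{\alpha c_1, \frac{4\alpha^2 D(M_n+\nabla_s+\tilde \epsilon_1)}{\hat \epsilon\bar d\sigma^{+}_{\mathcal A}}, \frac{2\alpha^2\|p_0\|}{\hat \epsilon}\right\}.
\]
Dividing by $c_1$, the right-hand side becomes $\max\{\alpha, Q\}$, where $Q$ is the maximum appearing inside the logarithm in \Cref{Outer Iterations}. We then split into two cases.

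In the first case the maximum is $\alpha$. Then $\alpha^{k-1}\leq \alpha$, which gives $k\leq 2\leq K$.

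In the second case the maximum is $Q$. Then $k-1\leq \log_\alpha Q\leq \log_\alpha^{+}Q$, which gives $k\leq 1+\lceil\log_\alpha^{+}Q\rceil\leq K$.

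In both cases every generated index is at most $K$, so the number of outer iterations is at most $K$. One could simply write $k\leq 1+\max\{1,\log_\alpha^+ Q\}\leq 2+\lceil \log_\alpha^+Q\rceil$ to handle both cases at once.

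The only point needing care is that \Cref{Penalty Bound Prop} must apply to every index the algorithm actually generates, including the last one. This holds because the proposition is stated for all generated iteration indices. Finiteness of the algorithm is then automatic: if the iteration count were unbounded, $c_k$ would exceed the bound. I do not expect a substantive obstacle; the main care is in tracking the off-by-one between $c_k=\alpha^{k-1}c_1$ and the additive constant $2$ in $K$.
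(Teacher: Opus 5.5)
Your proof is correct and follows essentially the same route as the paper: both combine the bound on $c_k$ from \Cref{Penalty Bound Prop} with $c_k=c_1\alpha^{k-1}$ and then take logarithms base $\alpha$. The paper argues by contradiction, assuming some index $\hat K>K$ is generated, whereas you argue directly with a case split on which term attains the maximum; your case split also shows explicitly why the $\alpha c_1$ term is harmless, a step the paper leaves implicit.
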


\begin{proof}
    Suppose by contradiction that APF-IAL generates an iteration index $\hat K>K$. It follows from the definition of $K$ above that
    \begin{align*}
    \hat K-1&> K-1=1+\left\lceil \log^{+}_{\alpha} \left(\max\left\{\frac{4\alpha^2 D(M_n+\nabla_s+\tilde\epsilon_1)}{\hat \epsilon (c_1\bar d\sigma^{+}_{\mathcal A})}, \frac{2\alpha^2\|p_0\|}{\hat \epsilon c_1}\right\}\right)\right\rceil\\
    &>\log^{+}_{\alpha} \left(\max\left\{\frac{4\alpha^2 D(M_n+\nabla_s+\tilde\epsilon_1)}{\hat \epsilon (c_1\bar d\sigma^{+}_{\mathcal A})}, \frac{2\alpha^2\|p_0\|}{\hat \epsilon c_1}\right\}\right).
    \end{align*}
It is then easy to see that the above relation implies that
\[\alpha^{\hat K-1}>\max\left\{\frac{4\alpha^2D(M_n+\nabla_s+\tilde\epsilon_1)}{\hat \epsilon (c_1\bar d\sigma^{+}_{\mathcal A})}, \frac{2\alpha^2\|p_0\|}{\hat \epsilon c_1}\right\}.\]
Now, it is easy to see from the way that $c_k$ is updated in step~\ref{APF13} of APF-IAL that $c_{\hat K}=c_1 \alpha^{\hat K-1}$. This fact together with the above bound then imply that
\begin{equation}\label{Bound}
c_{\hat K}>\max\left\{\alpha c_1, \frac{4\alpha^2 D(M_n+\nabla_s+\tilde\epsilon_1)}{\hat \epsilon (\bar d\sigma^{+}_{\mathcal A})}, \frac{2\alpha^2\|p_0\|}{\hat \epsilon}\right\},
\end{equation}
which contradicts the bound on $c_k$ in \Cref{Bound Penalty}.
\end{proof}

We are now ready to prove \Cref{Complexity APF-IAL}.

\begin{proof}[Proof of \Cref{Complexity APF-IAL}]
Observe first that it follows from \Cref{AR adaptive}(b),
\Cref{Bound on EpsilonK},
and the fact that $c_{k}=c_1\alpha^{k-1}$ that the PF-AR method performs at most
\begin{equation}\label{Inner Iterations}
T_k:=\sqrt{\frac{\bar LD+c_1\alpha^{k-1}D\|\mathcal A\|^2}{\hat \rho}}
\end{equation}
gradient evaluations during the $k$-th outer iteration of APF-IAL.
It then follows from this observation, the bound \cref{Outer
	Iterations} on the number of outer iterations performed by
	APF-IAL, \Cref{AR adaptive}(a), the fact that $\sqrt{a+b}\leq
	\sqrt{a}+\sqrt{b}$, and the bound on $c_k$ in \Cref{Penalty Bound Prop} that APF-IAL performs at most the following number of gradient evaluations:
\begin{align*}
    &\sum_{k=1}^{K}T_k\overset{\cref{Inner Iterations}}{=}\sum_{k=1}^{K}\sqrt{\frac{\bar LD+c_1\alpha^{k-1}D\|\mathcal A\|^2}{\hat \rho}}
    \leq K\sqrt{\frac{\bar LD}{\hat \rho}}+\frac{\sqrt{c_1D}\|\mathcal A\|}{\sqrt{\hat \rho}}\sum_{k=1}^{K}\sqrt{\alpha^{k-1}}\\
&= K \sqrt{\frac{\bar LD}{\hat \rho}}+\frac{\sqrt{c_1D}\|\mathcal A\|}{\sqrt{\hat\rho}}\left[\frac{\sqrt{\alpha^{K}}-1}{\sqrt{\alpha}-1}\right] \leq K \sqrt{\frac{\bar LD}{\hat \rho}}+\frac{\|\mathcal A\|\sqrt{D}\sqrt{c_1\alpha^{K}}}{\sqrt{\hat\rho}(\sqrt{\alpha}-1)}
=K \sqrt{\frac{\bar LD}{\hat \rho}}+\frac{\sqrt{\alpha}}{\sqrt{\alpha}-1}\frac{\|\mathcal A\|\sqrt{D}\sqrt{c_{K}}}{\sqrt{\hat\rho}}\\
&\overset{\cref{Bound Penalty}}{\leq} K \sqrt{\frac{\bar LD}{\hat \rho}} +\max\left\{\left[\frac{\alpha\|\mathcal A\|\sqrt{D}\sqrt{c_1}}{(\sqrt{\alpha}-1)}\right]\frac{1}{\sqrt{\hat \rho}}, \frac{2\alpha^{3/2}\|\mathcal A\|D\sqrt{M_n+\nabla_s+\tilde \epsilon_1}}{\sqrt{\bar d\sigma^{+}_{\mathcal A}}(\sqrt{\alpha}-1)}\left[\frac{1}{\sqrt{\hat \epsilon}\sqrt{\hat \rho}}\right]\right\}\\
&\overset{\cref{Outer Iterations}}{=}\left[2+\left\lceil\log_{\alpha} \left(\max\left\{\frac{4\alpha^2 D(M_n+\nabla_s+\tilde\epsilon_1)}{\hat \epsilon (c_1\bar d\sigma^{+}_{\mathcal A})}, \frac{2\alpha^2\|p_0\|}{\hat \epsilon c_1}\right\}\right)\right\rceil\right]\sqrt{\frac{\bar LD}{\hat \rho}}\\
&+\max\left\{\left[\frac{\alpha\|\mathcal A\|\sqrt{D}\sqrt{c_1}}{(\sqrt{\alpha}-1)}\right]\frac{1}{\sqrt{\hat \rho}}, \frac{2\alpha^{3/2}\|\mathcal A\|D\sqrt{M_n+\nabla_s+\tilde \epsilon_1}}{(\sqrt{\alpha}-1)\sqrt{\bar d\sigma^{+}_{\mathcal A}\hat \epsilon\hat \rho}}\right\}
\end{align*}
to find an $(\hat \epsilon,\hat \rho)$-approximate solution of \Cref{LinearConstraints} according to \Cref{ALsolutiontype}.
Hence, it follows from the above relation that APF-IAL performs at most
\[\mathcal O\left(\frac{\alpha^{3/2}\|\mathcal A\|D\sqrt{M_n+\nabla_s+\tilde \epsilon_1}}{\sqrt{\bar d\sigma^{+}_{\mathcal A}}(\sqrt{\alpha}-1)\sqrt{\hat \epsilon\hat \rho}}\right)\]
total gradient evaluations to find an $(\hat \epsilon,\hat \rho)$-approximate solution,
which implies the complexity bound in \Cref{APFComplexityBound}.

The last conclusion of the theorem immediately follows
from \Cref{APFComplexityBound} and taking $\hat \epsilon=\hat \rho$.
\end{proof}

\section{Linearly-Constrained Strongly Convex Optimization}
\phantomsection 
In this section, we discuss the complexities of the
inexact AL methods, O-IAL (\Cref{alg:O-IAL}), OPF-IAL (\Cref{alg:OPF-IAL}) and APF-IAL (\Cref{alg:APF-IAL}), for solving
\Cref{LinearConstraints}, where $\psi$ is now assumed to be $\bar \mu$-strongly convex with $\bar \mu>0$.

The rest of this section is organized as follows. \Cref{Restarted FISTA}
presents the parameter-free R-FISTA method and its complexity guarantees for
unconstrained strongly convex composite minimization. The R-FISTA method
will be used as a subroutine inside O-IAL, OPF-IAL, and APF-IAL to approximately
minimize their strongly convex AL subproblems. \Cref{StrongConvComplexity} presents the near-optimal complexity bounds of O-IAL, OPF-IAL, and APF-IAL for the linearly-constrained strongly convex problem \Cref{LinearConstraints}

\subsection{R-FISTA: Parameter-Free Strongly Convex Optimization Subproblem Solver}\label{Restarted FISTA}
\phantomsection 
This section describes a restarted FISTA method, namely R-FISTA, which will 
be used by OPF-IAL and APF-IAL to solve their strongly convex composite AL subproblems. More generally, R-FISTA solves the unconstrained composite optimization problem version of
\Cref{LinearConstraints}, i.e.,
\begin{equation}\label{MainProblem}
\min \left\{\psi(z):=\psi_s(z)+\psi_n(z)\right\},
\end{equation}
where $\psi$ is assumed to be $\bar \mu$-strongly convex with $\bar \mu>0$, $\psi_s$ is convex and $\bar L$-smooth, and $\psi_n$ is a closed proper convex function on its compact domain $\Dn$, which has diameter $D$.

Given $\epsilon>0$, R-FISTA aims to find $(y,v)\in \Dn\times \E$ such that
\begin{gather}\label{acg problem}
\|v\| \le \epsilon, \quad v \in \nabla \psi_s(y) + \partial \psi_n(y).
\end{gather}
This is the unconstrained form of the approximate optimality condition in
\Cref{ApproxOptSolution}, obtained from \Cref{ALsolutiontype} by setting
$\hat\epsilon=0$, $\hat\rho=\epsilon$, and $\mathcal A=0$. Any pair satisfying
\cref{acg problem} is therefore called an \textbf{$\epsilon$-optimal solution}
of \cref{MainProblem}.

The residual criterion \cref{acg problem} also yields a function-gap bound.
Recall that a proper convex function $F$ satisfies the quadratic growth
condition $\mathcal G_\mu^2$ if
$
\frac{\mu}{2}d(x,X^{*})^2\leq F(x)-F^{*}
$
holds for all $x\in \mathbb E$.
Every $\mu$-strongly convex function satisfies this condition. The following
result from \cite{AL40Aujol} relates function gap criterion to the distance from
stationarity.

\begin{lemma}\label{KL}
    Let $F: \R^{n}\rightarrow \R \cup \{+\infty\}$ be a proper lower
    semicontinuous convex function with nonempty minimizer set $X^{*}$, and let
    $F^{*}=\inf F$. If $F$ satisfies $\mathcal G_\mu^2$ for some $\mu>0$, then
    $
        2\mu(F(x)-F^{*})\leq d\left(0,\partial F(x)\right)^{2}
    $
holds for all $x\in \mathbb R^{n}$.
\end{lemma}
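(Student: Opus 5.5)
The plan is to prove the inequality directly from a subgradient inequality, in the setting in which the paper actually uses \Cref{KL}: $F=\psi$ is $\mu$-strongly convex, which is how the paper obtains $\mathcal G_\mu^2$. I also need to address what happens under $\mathcal G_\mu^2$ alone, because there the constant $2\mu$ is not available in general.

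\textbf{Strongly convex case.} The argument proceeds in three steps.
\begin{enumerate}
\item Fix $x$ with $\partial F(x)\neq\emptyset$; otherwise $d(0,\partial F(x))=+\infty$ and there is nothing to prove. Let $g\in\partial F(x)$ be the minimal-norm subgradient, so $\|g\|=d(0,\partial F(x))$. It exists because $\partial F(x)$ is closed and convex.
\item $\mu$-strong convexity gives, for every $y$,
\[
F(y)\ \ge\ F(x)+\langle g,y-x\rangle+\tfrac{\mu}{2}\|y-x\|^2 .
\]
\item Take the infimum over $y$ on both sides. The left side becomes $F^*$. The right-hand quadratic in $y$ is minimized at $y=x-g/\mu$, with value $F(x)-\|g\|^2/(2\mu)$. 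Hence $F^*\ge F(x)-\|g\|^2/(2\mu)$, which rearranges to $2\mu\,(F(x)-F^*)\le d(0,\partial F(x))^2$, exactly the stated bound.
\end{enumerate}

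\textbf{Route under $\mathcal G_\mu^2$ alone.} Let $\bar x$ be the projection of $x$ onto $X^*$, which is closed and convex, and set $d:=d(x,X^*)$. Convexity at $x$ gives
\[
F(x)-F^*\le\langle g,x-\bar x\rangle\le\|g\|\,d .
\]
The growth condition gives $d\le\sqrt{2(F(x)-F^*)/\mu}$. Combining the two and squaring yields only $\tfrac{\mu}{2}(F(x)-F^*)\le\|g\|^2$.

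\textbf{Main obstacle: the constant.} The factor-$4$ loss in the $\mathcal G_\mu^2$-only route is real, not an artifact of the proof. Consider the one-dimensional function
\[
F(x)=\tfrac{\mu d}{2}|x|\ \text{ for } |x|\le d,
\]
continued convexly with steeper slopes for $|x|>d$ (for instance, slope $\mu|x|$ beyond $d$). This function is convex and satisfies $\mathcal G_\mu^2$ with $X^*=\{0\}$. Just inside $x=d$, however, the gap is about $\mu d^2/2$ while the gradient is $\mu d/2$. Then $2\mu(F-F^*)\approx\mu^2d^2$, which exceeds $\|g\|^2=\mu^2d^2/4$.

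So, as literally worded with only quadratic growth, the inequality with constant $2\mu$ can fail. What remains valid under growth alone is $\tfrac{\mu}{2}(F(x)-F^*)\le d(0,\partial F(x))^2$.

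I would therefore present the proof above under $\mu$-strong convexity. That is precisely how \Cref{KL} is invoked later for the strongly convex AL subproblems and in the R-FISTA analysis. I would also flag that under $\mathcal G_\mu^2$ alone the constant must be weakened to $\mu/2$. If the subsequent analysis only uses the lemma for strongly convex $\psi$, nothing downstream changes.
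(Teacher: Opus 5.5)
The paper gives no proof of \Cref{KL}; it only cites \cite{AL40Aujol}. Your proposal is therefore a genuinely different, self-contained route, and it is correct.

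\textbf{Strongly convex case.} Your three steps are all sound: take the minimal-norm subgradient, apply the strongly convex subgradient inequality, and minimize the quadratic minorant over $y$. Together they prove $2\mu(F(x)-F^*)\le d(0,\partial F(x))^2$ for $\mu$-strongly convex $F$. This is the only form the paper needs. The sole invocation of the lemma is the remark right after it, which applies it to the $\bar\mu$-strongly convex $\psi$ to get $\psi(y)-\psi(x^*)\le\|v\|^2/(2\bar\mu)$. It is not used inside the R-FISTA proofs, contrary to what you suggest, but this changes nothing. Your version supports that remark, whereas the lemma as literally stated does not.

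\textbf{The statement as written.} Your diagnosis is right. With the paper's own normalization $\frac{\mu}{2}d(x,X^*)^2\le F(x)-F^*$ for $\mathcal G^2_\mu$, your example checks out.
\begin{itemize}
\item Integrating ``slope $\mu|x|$ beyond $d$'' gives $F(x)=\frac{\mu}{2}x^2$ for $|x|\ge d$. This matches $\frac{\mu d}{2}|x|$ continuously at $|x|=d$.
\item The derivative jumps from $\mu d/2$ to $\mu d$ there, so $F$ is convex.
\item $\mathcal G^2_\mu$ holds with $X^*=\{0\}$, with equality for $|x|\ge d$.
\item For every $x\in(d/4,d]$, the minimal-norm subgradient is $\mu d/2$, while $2\mu F(x)=\mu^2 d\,x>\mu^2d^2/4$.
\end{itemize}
Replacing the quadratic tails by $\iota_{[-d,d]}$ gives a counterexample inside the paper's compact-domain setting.

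Your growth-only bound $\frac{\mu}{2}(F(x)-F^*)\le d(0,\partial F(x))^2$, obtained via projection onto $X^*$, is also sharp: in your example it holds with equality at $x=d$. The $2\mu$ in the paper is the strongly convex constant placed under a growth-only hypothesis. The correct repair is the one you propose: assume strong convexity and keep $2\mu$, or keep $\mathcal G^2_\mu$ and weaken the constant to $\mu/2$.
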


Applying \Cref{KL} to $F=\psi$ with $\mu=\bar\mu$, and using
$v\in\partial \psi(y)$, gives
$
    \psi(y)-\psi(x^{*})\leq \frac{\|v\|^2}{2\bar\mu}.
$
Hence, any pair satisfying $\|v\|^2\leq 2\bar\mu\epsilon$ also satisfies
$\psi(y)-\psi(x^{*})\leq \epsilon$.

R-FISTA is a restarted, parameter-free accelerated gradient method. It begins with an aggressive estimate of the strong convexity parameter and checks a restart condition at each iteration. Whenever this condition fails, the method starts a new cycle with a smaller strong convexity estimate.

R-FISTA can be viewed as a compact-domain specialization of RPF-SFISTA~\cite{AL39Sujanani}. Compactness simplifies both the analysis and the restart mechanism: the restart condition depends only on the final iterate of the current cycle, so there is no need to track the best function-value iterate or restart from it. Since each cycle's complexity is independent of its initial function value, any point in $\Dn$ may be used to start the next cycle. This yields a simpler and more flexible method, which we now present.



\begin{algorithm}[H]
\small 
\caption{R-FISTA Method}
\label{alg:RFISTA}
\begin{algorithmic}[1]
\REQUIRE
 scalars
$\chi \in (0,1)$, $\left(\mu_0,\bar M_0\right)\in
\R_{++}^{2}$, tolerance $\epsilon>0$, function pair $(\psi_s,\psi_n)$, and initial point $\bar x_0\in \Dn$.\label{FISTAInput}
\STATE   set $\ell \gets 1$ \label{FISTA1}
\STATE set $\text{stopcrit } \gets \infty$\label{FISTA2}
\ENSURE initialize $j=0$,\label{FISTAInit}
$\underbar M_{\ell}\in [\max\{0.25\bar M_{\ell-1},\bar M_0\},\bar M_{\ell-1}]$,
$\tilde \mu=\mu_{\ell-1}$,
$(A_0,\tau_0,L_0)=(0,1,\underbar M_{\ell})$,
and $x_0\in\Dn$ with $x_0=\bar x_0$ if $\ell=1$; set $y_0=x_0$.
\WHILE{$\text{stopcrit } > \epsilon$ (main outer loop)}\label{FISTA3}
\STATE set $j\gets j+1$;\label{FISTA4}
\STATE set $L_{j}=L_{j-1}$ and evaluate\label{InnerWhileFISTA}:
\begin{equation}\label{def:ak-sfista1}
        a_{j-1}=\frac{\tau_{j-1}+\sqrt{\tau_{j-1}^2+4\tau_{j-1} A_{j-1}L_{j}}}{2L_{j}}, \quad \tx_{j-1}=\frac{A_{j-1}y_{j-1}+a_{j-1} x_{j-1}}{A_{j-1}+a_{j-1}},
        \end{equation}
        \begin{equation}
        y_{j}:=prox_{(1/L_j)\psi_n}\left(\tilde x_{j-1}-\frac{1}{L_j}\nabla \psi_s(\tilde x_{j-1})\right).
        \label{eq:ynext-sfista1}
        \end{equation}
\begin{enumerate}
\item[i.]
\noindent \textbf{While}: 
        $\quad
        \ell_{\psi_s}(y_{j};\tilde x_{j-1})+\frac{(1-\chi) L_{j}}{4}\|y_{j}-\tilde x_{j-1}\|^2< \psi_s(y_{j})
        $
         \[
\qquad \text{set } L_{j} \leftarrow 2 L_{j}
\text{  and update the equations  
        \cref{def:ak-sfista1}, \cref{eq:ynext-sfista1};}
\]

\noindent \textbf{EndWhile}: 
\end{enumerate}
\STATE evaluate\label{FISTA6}
\begin{align}
A_{j}&=A_{j-1}+a_{j-1}, \quad \tau_{j}= \tau_{j-1} + \frac{a_{j-1}\tilde\mu}{2},  \label{eq:taunext-sfista1} \\
s_{j}&=L_{j}(\tilde x_{j-1}-y_{j}),\label{eq:sk}\\
\quad x_{j}&= \frac{1}{\tau_{j}} \left[\frac{\tilde\mu a_{j-1} y_{j}}{2} + \tau_{j-1} x_{j-1}-a_{j-1}s_{j} \right] , \label{eq:xnext-sfista1}\\
v_{j}&=\nabla \psi_s(y_{j})-\nabla \psi_s(\tilde x_{j-1})+s_{j};\label{def:uk}
\end{align}
\IF{
\begin{equation}\label{restart condition}
\|y_j-x_{0}\|^{2} \geq \chi A_{j}L_{j} \|y_{j}-\tilde x_{j-1}\|^2
\end{equation}\textbf{does not hold}} 
\label{RestartCheckStep}
\STATE 
\textbf{restart}, i.e., set $\bar
M_{{\ell}}=L_{j}$, $\mu_{\ell}=0.5\tilde \mu$,  $\ell \gets
\ell+1$, $\text{stopcrit}\gets \infty$\label{FISTA8},
\STATE and execute \textbf{Initialization} above\label{FISTA9};
\ELSE\label{FISTA10}
\STATE set stopcrit $= \|v_j\|$ \label{stopcritFISTA}.\label{FISTA11}
\ENDIF\label{FISTA12}

\ENDWHILE\label{FISTA13}
\OUTPUT pair $(y,v):=(y_{j},v_{j})$ that satisfies \Cref{acg problem}.\label{FISTAOut}
\end{algorithmic}
\end{algorithm}
Several remarks about R-FISTA are now given. R-FISTA performs two types of iterations, inner ACG iterations indexed by $j$ and cycles indexed by $\ell$. At each of its inner ACG iterations, R-FISTA checks in its step~\ref{RestartCheckStep} a key inequality \Cref{restart condition} to determine when to restart and reduce its current estimate $\mu_{\ell}$ of the strong convexity parameter. If the condition fails to hold during R-FISTA's $\ell$-th cycle, then R-FISTA sets $\mu_{\ell}=0.5\mu_{\ell-1}$ and starts its $(\ell+1)$-st cycle with this strong convexity estimate. Finally, note that each of R-FISTA's inner iterations may perform several gradient/proximal operator evaluations due to its inner while loop in step~\ref{InnerWhileFISTA}(i). We express our total complexity of R-FISTA in terms of the number of inner ACG iterations that it performs and the total number of gradient evaluations that it performs. These two quantities are on the same order.

Before stating the main results of the R-FISTA method, the following quantities are introduced
\begin{equation}\label{D0 def}
 \theta=4/(1-\chi), \quad \zeta_{\ell}:=\bar L+\max\{\underbar
M_{\ell}, \theta\bar L\}, \quad Q_{\ell}:=  2\sqrt{2} \sqrt{\frac{\max\{\underbar
M_{\ell},\theta{\bar L}\}}{\mu_{\ell-1}}}.
\end{equation}

The following proposition and theorem state the main complexity results
of the R-FISTA method and key properties of its output. The proofs of \Cref{prop:nest_complex1} and \Cref{Total Complexity} are given in Appendix~\ref{SC-FISTA Appendix Proof}.

\begin{prop}\label{prop:nest_complex1} 
The following statements about the $\ell$-th cycle of R-FISTA hold:
\begin{itemize}
\item[(a)] 
its main while loop either stops successfully with stopcrit $\leq \epsilon$ or the cycle stops because the inequality \Cref{restart condition} in step~\ref{RestartCheckStep} does not hold (in one of its inner iterations) in at most
\begin{equation}\label{eq:eq1}
\left\lceil\left(1+Q_{\ell}\right)\log^{++}\left(\frac{D^2\zeta_{\ell}^2}{\chi \epsilon^2}\right)+1\right\rceil+\left\lceil  \log_2^+\left(\frac{2\bar L}{(1-\chi)\underbar M_{\ell}}\right) \right\rceil
\end{equation}
ACG iterations/gradient evaluations;
\item[(b)] if the main while loop of the cycle terminates successfully with stopcrit $\leq \epsilon$, then the pair $(y,v)$ that R-FISTA outputs satisfies
\begin{equation}\label{OutputRFISTA}
    \|v\|\leq \epsilon, \quad v\in \nabla \psi_s(y)+\partial \psi_n(y),
\end{equation}
i.e., $(y,v)$ is an $\epsilon$-optimal solution of \cref{MainProblem};
\item[(c)] if $\mu_{\ell-1} \in (0,\bar \mu]$, then a restart is never executed during the cycle and the main while loop of the cycle always stops successfully with stopcrit $\leq \epsilon$ 
and a pair $(y,v)$
satisfying \Cref{OutputRFISTA}
in at most \cref{eq:eq1}
ACG iterations/gradient evaluations. 
\end{itemize}
\end{prop}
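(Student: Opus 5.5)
The plan is the standard FISTA estimate-sequence analysis with a strong-convexity estimate $\tilde\mu=\mu_{\ell-1}$. The restart test \eqref{restart condition} then converts growth of $A_j$ into a decay of $\|v_j\|$. I would prove (b) first, since it is purely algebraic. By the prox optimality in \eqref{eq:ynext-sfista1}, $s_j=L_j(\tilde x_{j-1}-y_j)\in \nabla\psi_s(\tilde x_{j-1})+\partial\psi_n(y_j)$. Hence \eqref{def:uk} gives $v_j\in\nabla\psi_s(y_j)+\partial\psi_n(y_j)$ at every inner iteration. Termination with stopcrit $=\|v_j\|\le\epsilon$ then yields \eqref{OutputRFISTA}.

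For (a), I would split the gradient count into two parts.

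\emph{Backtracking doublings.} The inner while loop in step~\ref{InnerWhileFISTA}(i) stops once $(1-\chi)L_j/4\ge \bar L/2$, i.e.\ once $L_j\ge 2\bar L/(1-\chi)$. Since $L_j$ is never decreased within a cycle and starts at $\underbar M_\ell$, the total number of extra evaluations is at most $\lceil\log_2^+(2\bar L/((1-\chi)\underbar M_\ell))\rceil$. Along the way this gives $L_j\le \max\{\underbar M_\ell,\theta\bar L\}$.

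\emph{Growth of $A_j$.} The standard recursion with $\tau_j=1+\tilde\mu A_j/2$ and $L_jA_j\tau$-relation from \eqref{def:ak-sfista1} gives $a_{j-1}^2L_j=\tau_{j-1}(a_{j-1}+A_{j-1})$. From this I would derive $A_j\ge \frac{1}{L_{\max}}\big(1+\sqrt{\tilde\mu/(2L_{\max})}\big)^{2(j-1)}$, i.e.\ geometric growth with ratio governed by $Q_\ell$. (If convenient, I would use $A_j\ge j^2/(4L_{\max})$ in the early phase.)

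\emph{Residual decay from the restart test.} Whenever \eqref{restart condition} holds, $\|y_j-\tilde x_{j-1}\|^2\le \|y_j-x_0\|^2/(\chi A_jL_j)\le D^2/(\chi A_jL_j)$, because $y_j,x_0\in\Dn$. Then $\|v_j\|\le \|\nabla\psi_s(y_j)-\nabla\psi_s(\tilde x_{j-1})\|+L_j\|\tilde x_{j-1}-y_j\|\le(\bar L+L_j)\|y_j-\tilde x_{j-1}\|\le \zeta_\ell D/\sqrt{\chi A_jL_j}$. Combined with the lower bound on $A_jL_j$, this forces $\|v_j\|\le\epsilon$ once $(1+Q_\ell)^{-1}$-rate decay has run for $\log(D^2\zeta_\ell^2/(\chi\epsilon^2))$ steps. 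Concretely, I would use $\log(1+x)\ge x/(1+x)$ to obtain the factor $(1+Q_\ell)$. Each inner iteration thus either fails the restart test, ending the cycle, or reaches this bound, which gives \eqref{eq:eq1}.

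For (c), I would show that if $\tilde\mu\le\bar\mu$ the restart condition always holds. The estimate-sequence inequality, using \eqref{linesearch}-type descent with the $(1-\chi)$ slack, should yield
\[
A_j(\psi(y_j)-\psi(u))+\tfrac{\tau_j}{2}\|u-x_j\|^2+\tfrac{\chi A_jL_j}{2}\cdot\tfrac12\|y_j-\tilde x_{j-1}\|^2\le \tfrac12\|u-x_0\|^2
\]
for all $u$. The role of strong convexity with $\tilde\mu\le\bar\mu$ is to validate the $\tilde\mu a_{j-1}/2$ increments of $\tau_j$. I would then take $u=y_j$, so the first term vanishes and nonnegative terms can be dropped, giving $\|y_j-x_0\|^2\ge\chi A_jL_j\|y_j-\tilde x_{j-1}\|^2$, which is exactly \eqref{restart condition}. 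This is the main obstacle: tracking the constants $(1-\chi)/4$ and $\chi$ in the one-step potential inequality carefully so that the leftover quadratic term is precisely $\chi A_jL_j\|y_j-\tilde x_{j-1}\|^2$. With the restart never triggered, the count from (a) applies and (b) gives the output property.
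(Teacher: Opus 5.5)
Your route is the paper's. Part (b) comes from the prox-optimality inclusion. Part (a) combines \eqref{restart condition} with $\|v_j\|\le\zeta_\ell\|y_j-\tilde x_{j-1}\|$, a geometric lower bound on $A_jL_j$, $\log(1+x)\ge x/(1+x)$, and a separate count of the doublings of $L_j$; your doubling count is correct. Part (c) evaluates the telescoped potential inequality at $y_j$. The paper does not prove the two ingredients \eqref{sumAkbound} and \eqref{recursion}; it imports them. In both of your derivations of these ingredients the constants are off in a way that matters.

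\textbf{Growth bound.} Your rate $\bigl(1+\sqrt{\tilde\mu/(2L_{\max})}\bigr)^{2(j-1)}$ is false for large $j$. From $L_ja_{j-1}^2=\tau_{j-1}A_j\ge\frac{\tilde\mu}{2}A_{j-1}A_j$ and $a_{j-1}=rA_{j-1}$ you get $r^2\ge q_j^2(1+r)$ with $q_j:=\sqrt{\tilde\mu/(2L_j)}$. For small $q_j$ the per-step ratio $1+r$ is asymptotically only about $1+q_j$, not $(1+q_j)^2$. What is true is $1+r\ge 1+q_j+q_j^2/2\ge(1+q_j/2)^2\ge(1+Q_\ell^{-1})^2$. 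This is exactly where the factor $2\sqrt2$ in \eqref{D0 def} comes from, and it is all that \eqref{eq:eq1} needs. You should also propagate the product directly: $A_1L_1=1$ and $A_jL_j\ge(1+r)A_{j-1}L_j\ge(1+r)A_{j-1}L_{j-1}$ by monotonicity of $L_j$. A bound of the form $A_j\ge(\cdots)/L_{\max}$ does not give $A_jL_j\ge(\cdots)$, because $L_j\le L_{\max}$ points the wrong way.

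\textbf{Potential inequality in (c).} Your displayed inequality carries $\frac{\chi A_jL_j}{4}\|y_j-\tilde x_{j-1}\|^2$. Setting $u=y_j$ then gives only $\|y_j-x_0\|^2\ge\frac{\chi}{2}A_jL_j\|y_j-\tilde x_{j-1}\|^2$, which does not imply \eqref{restart condition}. The correct per-step decrease is $\frac{\chi A_jL_j}{2}\|y_j-\tilde x_{j-1}\|^2$, as in \eqref{recursion}. The mechanism that produces it is as follows.
\begin{itemize}
\item Let $\tilde\ell(u):=\ell_{\psi_s}(y_j;\tilde x_{j-1})+\psi_n(y_j)+\langle s_j,u-y_j\rangle$ and $h:=\psi-\tilde\ell$.
\item Then $h\ge 0$, because $s_j-\nabla\psi_s(\tilde x_{j-1})\in\partial\psi_n(y_j)$. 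Also $h$ is $\bar\mu$-strongly convex, and $h(y_j)\le\frac{(1-\chi)L_j}{4}\|y_j-\tilde x_{j-1}\|^2$ by the backtracking test.
\item Applying strong convexity of $h$ at the midpoint of $u$ and $y_j$ gives the lower model $\psi(u)\ge\tilde\ell(u)+\frac{\tilde\mu}{4}\|u-y_j\|^2-h(y_j)$ whenever $\tilde\mu\le\bar\mu$.
\item $x_j$ minimizes $a_{j-1}$ times this model plus $\frac{\tau_{j-1}}{2}\|u-x_{j-1}\|^2$; this explains the increment $\tilde\mu a_{j-1}/2$ in $\tau_j$.
\item The usual estimate-sequence step with $\tau_{j-1}A_j/a_{j-1}^2=L_j$ then gives, for the potential $\sigma_j$ of \Cref{lm:hysub-4-fista}, $\sigma_{j-1}(x)-\sigma_j(x)\ge A_j\bigl[\frac{L_j}{2}\|y_j-\tilde x_{j-1}\|^2-2h(y_j)\bigr]\ge\frac{\chi A_jL_j}{2}\|y_j-\tilde x_{j-1}\|^2$.
\end{itemize}
The backtracking slack $h(y_j)$ is paid twice: once in the lower model and once in $\psi(y_j)=\tilde\ell(y_j)+h(y_j)$. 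That is why the test uses $\frac{1-\chi}{4}$ rather than $\frac{1-\chi}{2}$. With these two corrections your argument coincides with the paper's.
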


\begin{theorem}\label{Total Complexity}
Suppose that inputs $\mu_0$ and $\bar M_0$ satisfy $\mu_0\geq \bar \mu$ and 
$\bar M_0\leq 2 \bar L$. Then, R-FISTA terminates with a pair $(y,v)$,
that is an $\epsilon$-optimal solution of \cref{MainProblem},
in at most
\begin{equation}\label{eq:eq1-3}
\mathcal O_1\left(\left\lceil\log^{++}_2\left(\mu_0/\bar \mu\right)\right\rceil\left[\sqrt{\frac{\bar L}{\bar \mu}}\log^{++} \left(\frac{D\bar L}{\epsilon}\right)+\log_2^{+}(\bar L/\bar M_0)\right] \right)
\end{equation}
ACG iterations/gradient evaluations. Hence R-FISTA performs $\mathcal O\left(\sqrt{\bar L/\bar \mu}\log\left(D\bar L/\epsilon\right)\right)$
ACG iterations/gradient evaluations to find an $\epsilon$-optimal solution of \Cref{MainProblem}.
\end{theorem}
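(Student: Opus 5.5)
The plan is to assemble \Cref{Total Complexity} from \Cref{prop:nest_complex1} in three steps: bound the number of cycles, bound the work in each cycle, and multiply.

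\textbf{Step 1: the number of cycles.} Each restart halves the strong convexity estimate, so $\mu_{\ell}=0.5\mu_{\ell-1}$ and $\mu_{\ell-1}=2^{-(\ell-1)}\mu_0$. Once $\mu_{\ell-1}\le\bar\mu$, \Cref{prop:nest_complex1}(c) says the cycle never restarts and terminates successfully. Hence the index of the last cycle is at most $1+\lceil\log_2^{+}(\mu_0/\bar\mu)\rceil=\mathcal O(\lceil\log_2^{++}(\mu_0/\bar\mu)\rceil)$. By \Cref{prop:nest_complex1}(b), the output of that cycle is an $\epsilon$-optimal solution.

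\textbf{Step 2: the Lipschitz estimates.} I need two-sided control of $\underbar M_\ell$ so that the per-cycle bound \cref{eq:eq1} can be made uniform in $\ell$.

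For the lower bound, the initialization gives $\underbar M_\ell\ge\bar M_0$.

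For the upper bound, I will argue that the inner backtracking loop (step~\ref{InnerWhileFISTA}(i)) stops once $L_j\ge 2\bar L/(1-\chi)$. This holds because $\bar L$-smoothness gives $\psi_s(y)-\ell_{\psi_s}(y;\tilde x)\le\frac{\bar L}{2}\|y-\tilde x\|^2\le\frac{(1-\chi)L_j}{4}\|y-\tilde x\|^2$. Since $L_j$ only doubles, every $L_j\le\max\{\underbar M_\ell,4\bar L/(1-\chi)\}$. Then $\bar M_\ell=L_j$ and $\underbar M_{\ell+1}\le\bar M_\ell$. An induction starting from $\bar M_0\le 2\bar L$ gives $\underbar M_\ell\le\theta\bar L$ with $\theta=4/(1-\chi)$.

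Consequences for \eqref{D0 def}:
\begin{itemize}
\item $\max\{\underbar M_\ell,\theta\bar L\}=\theta\bar L$.
\item $\zeta_\ell=(1+\theta)\bar L=\mathcal O(\bar L)$, treating $\chi$ as a constant.
\item $Q_\ell=2\sqrt2\sqrt{\theta\bar L/\mu_{\ell-1}}$.
\end{itemize}

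\textbf{Step 3: per-cycle cost and summation.} The delicate point is that $\mu_{\ell-1}\ge\bar\mu/2$ for every cycle that is actually executed. For $\ell=1$ this is the hypothesis $\mu_0\ge\bar\mu$. For $\ell>1$, cycle $\ell-1$ restarted, so by \Cref{prop:nest_complex1}(c) we must have $\mu_{\ell-2}>\bar\mu$, which gives $\mu_{\ell-1}>\bar\mu/2$. Hence $Q_\ell=\mathcal O(\sqrt{\bar L/\bar\mu})$ uniformly.

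The logarithmic terms then simplify:
\begin{itemize}
\item $\log^{++}(D^2\zeta_\ell^2/(\chi\epsilon^2))=\mathcal O(\log^{++}(D\bar L/\epsilon))$.
\item $\log_2^{+}(2\bar L/((1-\chi)\underbar M_\ell))\le\mathcal O(1)+\log_2^+(\bar L/\bar M_0)$, using $\underbar M_\ell\ge\bar M_0$.
\end{itemize}
So each cycle costs at most $\mathcal O(\sqrt{\bar L/\bar\mu}\log^{++}(D\bar L/\epsilon)+\log_2^+(\bar L/\bar M_0))$ ACG iterations.

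Inner backtracking evaluations are also bounded: within each cycle, $L_j$ starts at $\underbar M_\ell\ge\bar M_0$, doubles, and is capped at $\theta\bar L$. Together with \cref{eq:eq1}, this keeps the gradient-evaluation count of the same order as the ACG-iteration count.

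Multiplying the per-cycle cost by the cycle count from Step 1 gives \eqref{eq:eq1-3}. The final statement follows by treating $\mu_0/\bar\mu$, $\bar L/\bar M_0$ and $\chi$ as constants.
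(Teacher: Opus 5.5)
Your proposal is correct and follows essentially the same route as the paper. You bound the number of cycles via the halving of $\mu_\ell$ together with \Cref{prop:nest_complex1}(c), establish $\bar M_0\le \underbar M_\ell\le \theta\bar L$ and $\mu_{\ell-1}\ge\bar\mu/2$ to make the per-cycle bound \cref{eq:eq1} uniform, and multiply; the only difference is that you spell out the backtracking argument for $\underbar M_\ell\le\theta\bar L$, which the paper delegates to Lemma 2.5 of \cite{AL39Sujanani}.
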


\begin{rem}\label{RFISTAAssumption}
The assumptions in 
\Cref{Total Complexity} that the inputs $(\mu_0,\bar M_0)$ satisfy $\mu_0\geq \bar \mu$ and $\bar M_0\leq 2\bar L$ are not necessary to establish convergence results, but simplify our analysis and R-FISTA's complexity bound. 

For more details on how to construct such a $\bar M_0$, see the discussion after the AR-L method. It is also easy to construct $\mu_0\geq \bar \mu$. For example, select $w_0\neq x_0$ and compute
$w_0^{+}=\textrm{prox}_{(1/\bar M_0)\psi_n}(w_0-\frac{1}{\bar M_0} \nabla \psi_s(w_0))$
and $x_0^{+}=\textrm{prox}_{(1/\bar M_0)\psi_n}(x_0-\frac{1}{\bar M_0} \nabla \psi_s(x_0))$.
If $x_0^{+}\neq w_0^{+}$,
then set 
$g_{x_0}=\bar M_0(x_0-x_0^{+})+\nabla \psi_s(x_0^{+})-\nabla \psi_s(x_0)$ and $g_{w_0}=\bar M_0(w_0-w_0^{+})+\nabla \psi_s(w_0^{+})-\nabla \psi_s(w_0)$.
By the optimality conditions of the proximal-gradient steps,
$g_{x_0}\in \partial \psi(x_0^{+})$ and $g_{w_0}\in \partial \psi(w_0^{+})$.
Hence, by the $\bar\mu$-strong convexity of $\psi$, we have
\[
\frac{\inner{g_{x_0}-g_{w_0}}{x_0^{+}-w_0^{+}}}{\|x_0^+-w_0^{+}\|^2}\geq \bar\mu.
\]
Thus, if $x_0^{+}\neq w_0^{+}$, then set
\[
\mu_0=\frac{\inner{g_{x_0}-g_{w_0}}{x_0^{+}-w_0^{+}}}{\|x_0^+-w_0^{+}\|^2}.
\]
If $x_0^{+}= w_0^{+}$, then recompute both points (possibly with different stepsizes) until the resulting proximal-gradient points are distinct.
\end{rem}

\subsection{Complexity of O-IAL, OPF-IAL, and APF-IAL for Linearly-Constrained \texorpdfstring{\\}{Lg} Strongly Convex Optimization}\label{StrongConvComplexity}
This subsection considers the complexity of the O-IAL, OPF-IAL, and APF-IAL methods presented
in \Cref{OIALMethod}, \Cref{OPF-IAL}, and \Cref{APFIAL}, respectively, for solving \cref{LinearConstraints} under 
the assumption that $\psi$ is a $\bar \mu$-strongly convex function, and the assumption that R-FISTA is used (instead of the PF-AR method) to approximately minimize the augmented Lagrangian function. 
All three achieve a near-optimal complexity of $\mathcal O(\hat \epsilon^{-1/2}\log(1/\hat \epsilon))$
for
finding an $(\hat \epsilon,\hat \epsilon)$
approximate solution of \Cref{LinearConstraints}.

The following theorem states the main complexity result of O-IAL under the strong convexity assumption.
\begin{theorem}\label{thm:RFISTAOIALCompl}
Suppose that \Cref{ass:A1} holds and that $\psi$ is $\bar \mu$-strongly convex for some $\bar \mu>0$. Assume that O-IAL is initialized with the function pair $(\psi_s,\psi_n)$, a point $z_0 \in {\Dn}$, a multiplier $p_0\in \R^{m}$, a primal-dual tolerance pair $(\hat \epsilon,\hat \rho) \in \mathbb R^{2}_{++}$, the penalty parameter $c=1/\hat \epsilon$, and the constants $M_0$ and $D_0$ defined in \Cref{remarkPFAR}. In addition, let $\chi\in (0,1)$ and let the initial strong convexity estimate $\mu_0>0$ satisfy the conditions stated in \Cref{RFISTAAssumption}.

Suppose further that, during its $k$-th iteration, O-IAL calls R-FISTA, rather than PF-AR, in step~\ref{OIAL7} with inputs $\chi\in(0,1)$, $\mu_0>0$, initial Lipschitz estimate $\bar M_0=M_0$, tolerance $\epsilon= \min\left\{\frac{\hat \epsilon}{4
D}, \hat \rho\right\}$, function pair $(\cL^s_{c}(\cdot,p_{k-1}),\psi_n(\cdot))$, and initial point $z_{k-1}\in \Dn$. Then O-IAL performs at most
\begin{equation}\label{OIALFISTAcomplexity}
\mathcal O_1\left(\Pi(p_0)\left\lceil\log^{++}_2\left(\mu_0/\bar \mu\right)\right\rceil\left[\sqrt{\frac{\bar L+\|\mathcal A\|^2/\hat \epsilon}{\bar \mu}}\log^{++} \left(\frac{D(\bar L+\|\mathcal A\|^2/\hat \epsilon)}{\min\left\{\frac{\hat \epsilon}{4
D}, \hat \rho\right\}}\right)+\log_2^{+}\left(\frac{\bar L+\|\mathcal A\|^2/\hat \epsilon}{\bar M_0}\right)\right]\right)
\end{equation}
gradient evaluations to obtain an $(\hat \epsilon, \hat \rho)$-approximate solution of \Cref{LinearConstraints}. Consequently, O-IAL performs at most
$\mathcal O(\hat \epsilon^{-1/2}\log(1/\hat \epsilon))$
total gradient evaluations to obtain an $(\hat \epsilon,\hat \epsilon)$-approximate solution of \Cref{LinearConstraints}.
\end{theorem}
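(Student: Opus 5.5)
The argument has the same two-level structure as \Cref{Main Complexity O-IAL}: bound the number of outer iterations, bound the cost of each R-FISTA call, and multiply.

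\textbf{Outer iterations.} \Cref{lem:lowerbndA,lem:ALtelescopic,OutItCompl} use only two facts: each call returns a pair $(z_k,r_k)$ with $r_k\in\nabla\psi_s(z_k)+\partial\psi_n(z_k)+\cA^*p_k$, and $\|r_k\|\le\min\{c\hat\epsilon^2/(4D),\hat\rho\}$. The solver itself plays no role. With $c=1/\hat\epsilon$ we have $c\hat\epsilon^2/(4D)=\hat\epsilon/(4D)$, which is exactly the R-FISTA tolerance $\epsilon$ in the statement.

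By \Cref{Total Complexity}, R-FISTA applied to $(\cL^s_c(\cdot,p_{k-1}),\psi_n)$ returns $(z_k,v)$ with $v\in\nabla\cL^s_c(z_k,p_{k-1})+\partial\psi_n(z_k)$ and $\|v\|\le\epsilon$. The gradient identity \eqref{gradient formula} rewrites this inclusion as $v\in\nabla\psi_s(z_k)+\partial\psi_n(z_k)+\cA^*p_k$, so \eqref{inclusion Lagrangian} holds with $r_k=v$. Then \Cref{OutItCompl} gives at most $\lceil 2\Pi(p_0)\hat\epsilon^2/\hat\epsilon^2\rceil=\lceil 2\Pi(p_0)\rceil=\mathcal O(\Pi(p_0))$ outer iterations.

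\textbf{Cost per call.} To invoke \Cref{Total Complexity} for the $k$-th subproblem, I check its hypotheses:
\begin{itemize}
\item $\cL^s_c(\cdot,p_{k-1})$ is convex and $L_c:=\bar L+\|\cA\|^2/\hat\epsilon$-smooth by \Cref{ARtranslation}(a).
\item $\cL_c(\cdot,p_{k-1})$ is at least $\bar\mu$-strongly convex, since $\psi$ is $\bar\mu$-strongly convex and the added terms are convex.
\item $\psi_n$ has the same compact domain of diameter $D$.
\item $\bar M_0=M_0\le 2\bar L\le 2L_c$, because $M_0$ is built from $\nabla\psi_s$ as in \Cref{remarkPFAR}.
\item $\mu_0\ge\bar\mu$, by the construction in \Cref{RFISTAAssumption}.
\end{itemize}
The one subtlety is that the subproblem's true strong convexity modulus $\mu_k\ge\bar\mu$ may exceed $\mu_0$, so the hypothesis $\mu_0\ge\mu_k$ can fail. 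This is harmless. The proof of \Cref{Total Complexity} uses the strong convexity parameter only as a lower bound: by \Cref{prop:nest_complex1}(c), no restart occurs once the estimate is at most any valid modulus. Applying the theorem with the valid modulus $\bar\mu$ gives at most $\lceil\log_2^{++}(\mu_0/\bar\mu)\rceil$ cycles. Each cycle costs $\mathcal O(\sqrt{L_c/\bar\mu}\,\log^{++}(DL_c/\epsilon)+\log_2^+(L_c/\bar M_0))$, where the bound $Q_\ell\lesssim\sqrt{L_c/\mu_{\ell-1}}$ is controlled because $\mu_{\ell-1}\ge\bar\mu/2$ before termination. Substituting $\epsilon=\min\{\hat\epsilon/(4D),\hat\rho\}$ yields the bracketed per-call factor in \eqref{OIALFISTAcomplexity}.

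\textbf{Total bound and rate.} Multiplying the outer-iteration count $\mathcal O(\Pi(p_0))$ by the per-call cost gives \eqref{OIALFISTAcomplexity}. For the final claim, take $\hat\rho=\hat\epsilon$. Then $L_c=\mathcal O(1/\hat\epsilon)$, so $\sqrt{L_c/\bar\mu}=\mathcal O(\hat\epsilon^{-1/2})$. The logarithm has argument $\mathcal O(\hat\epsilon^{-2})$, so it is $\mathcal O(\log(1/\hat\epsilon))$. The term $\log_2^+(L_c/\bar M_0)=\mathcal O(\log(1/\hat\epsilon))$ is of lower order. The total is therefore $\mathcal O(\hat\epsilon^{-1/2}\log(1/\hat\epsilon))$.

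The main obstacle I expect is the strong-convexity bookkeeping in the per-call step: justifying that \Cref{Total Complexity} may be applied with $\bar\mu$ as a lower bound on the subproblem modulus even when $\mu_0<\mu_k$. If the theorem's proof turns out to need $\mu_0\ge\mu_k$ exactly, I would instead argue directly from \Cref{prop:nest_complex1}(a),(c).
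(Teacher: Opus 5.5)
Your proposal is correct and follows essentially the same route as the paper: apply \Cref{Total Complexity} to each subproblem with smoothness constant $\bar L+\|\mathcal A\|^2/\hat\epsilon$ and modulus $\bar\mu$, use the argument of \Cref{OutItCompl} with $c=1/\hat\epsilon$ to get $\lceil 2\Pi(p_0)\rceil$ outer iterations, and multiply. You also check hypotheses that the paper leaves implicit: that $\bar\mu$ remains a valid modulus for each subproblem, that $\bar M_0\le 2\bar L\le 2(\bar L+\|\mathcal A\|^2/\hat\epsilon)$, and that the gradient identity converts R-FISTA's output into \eqref{inclusion Lagrangian}.
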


\begin{proof}
Suppose that the assumptions of the theorem hold, i.e., O-IAL uses penalty $c=1/\hat \epsilon$ and calls R-FISTA in step~\ref{OIAL7} of its $k$-th iteration with inputs 
$\chi\in (0,1)$, $\mu_0>0$ as in \Cref{RFISTAAssumption}, $\bar M_0=M_0$, tolerance $\epsilon= \min\left\{\frac{\hat \epsilon}{4
D}, \hat \rho\right\}$, function pair $(\cL^s_{c}(\cdot,p_{k-1}),\psi_n(\cdot))$, and initial point $z_{k-1}\in \Dn$. It then follows from these assumptions, the fact that $\mathcal L_c^{s}(\cdot,p_{k-1})$ is 
$\bar L+\|\mathcal A\|^2/\hat \epsilon$-smooth, and \Cref{Total Complexity} that the call to R-FISTA outputs a pair 
$(z_k,r_k)$ that satisfies
 \begin{equation}\label{inclusionRFISTA}
	 r_{k}\in \nabla \psi_s(z_k)+\partial\psi_n(z_k)+\cA^{*}p_{k}, \quad \|r_k\|\leq \min\left\{\frac{\hat \epsilon}{4D},\hat \rho\right\},
        \end{equation}
within at most 
\begin{equation}\label{RFISTAOIALInnerOIAL}
\mathcal O_1\left(\left\lceil\log^{++}_2\left(\mu_0/\bar \mu\right)\right\rceil\left[\sqrt{\frac{\bar L+\|\mathcal A\|^2/\hat \epsilon}{\bar \mu}}\log^{++} \left(\frac{D(\bar L+\|\mathcal A\|^2/\hat \epsilon)}{\min\left\{\frac{\hat \epsilon}{4
D}, \hat \rho\right\}}\right)+\log_2^{+}\left(\frac{\bar L+\|\mathcal A\|^2/\hat \epsilon}{\bar M_0}\right)\right] \right)
\end{equation}
gradient evaluations.
It then follows from the fact that O-IAL uses penalty parameter $c=1/\hat \epsilon$, and  similar arguments as in \Cref{Proof of Theorem O-IAL} and \Cref{OutItCompl} that O-IAL performs at most $\lceil 2\Pi(p_0)\rceil $ outer iterations, i.e., those indexed by $k$. It then follows from this bound, the bound in \Cref{RFISTAOIALInnerOIAL}, and both relations in \Cref{inclusionRFISTA} that the number of gradient evaluations O-IAL performs to find an $(\hat \epsilon,\hat \rho)$-approximate solution of \Cref{LinearConstraints} is on the order of the quantity in \Cref{OIALFISTAcomplexity}. The final conclusion of \Cref{thm:RFISTAOIALCompl} is then immediate from the complexity bound in \Cref{OIALFISTAcomplexity} and setting $\hat \rho=\hat \epsilon$.
\end{proof}

\begin{remark}
It can also be shown under similar assumptions as in \Cref{thm:RFISTAOIALCompl} that if the parameter-free AL method, OPF-IAL (\Cref{alg:OPF-IAL}), calls R-FISTA (instead of PF-AR) during its step~\ref{ARorFISTACall}, it also achieves a near-optimal primal-dual complexity bound of $\mathcal O(\hat \epsilon^{-1/2}\log(\hat \epsilon^{-1}))$ to find an $(\hat \epsilon,\hat \epsilon)$-approximate solution of \Cref{LinearConstraints}.
We omit the proof since it follows from nearly identical arguments as the proof of \Cref{thm:RFISTAOIALCompl}.
\end{remark}
The theorem below now states the last-iterate complexity  of the adaptive APF-IAL method (\Cref{alg:APF-IAL}) under the strong convexity assumption of $\psi$.

\begin{theorem}\label{thm:RFISTAPFLCompl}
Suppose that \Cref{ass:A6,ass:A7} hold, $\psi$ is $\bar \mu$-strongly convex for some $\bar \mu>0$, and $K$ is as in \Cref{Outer Iterations}. Assume that APF-IAL is initialized with the function pair $(\psi_s,\psi_n)$, a point $z_0 \in {\Dn}$, a multiplier $p_0\in \R^{m}$, a primal-dual tolerance pair $(\hat \epsilon,\hat \rho) \in \mathbb R^{2}_{++}$, parameters $c_1>0$, $\alpha>1$, $\omega>1$, and $\tilde \epsilon_1\geq \hat \rho$, and constants $M_0$ and $D_0$ defined in \Cref{remarkPFAR}. In addition, let $\chi\in (0,1)$ and let the initial strong convexity estimate $\mu_0>0$ satisfy the conditions stated in \Cref{RFISTAAssumption}.

Suppose further that, during its $k$-th iteration, APF-IAL calls R-FISTA, rather than PF-AR, in step~\ref{APF10} with inputs $\chi\in(0,1)$, $\mu_0>0$, initial Lipschitz estimate $\bar M_0=M_0$, tolerance $\epsilon= \epsilon_k$, function pair $(\cL^s_{c}(\cdot,p_{k-1}),\psi_n(\cdot))$, and initial point $z_{k-1}\in \Dn$. Then APF-IAL performs at most
\begin{equation}\label{APFIALFISTAcomplexity}
\mathcal O_1\left(\left\lceil\log^{++}_2\left(\mu_0/\bar \mu\right)\right\rceil\left[K\sqrt{\frac{\bar L}{\bar \mu}}+\sqrt{\frac{\|\mathcal A\|^2}{\hat \epsilon \bar \mu}}\right]\log^{++} \left(\frac{D(\bar L+\|\mathcal A\|^2/\hat \epsilon)}{\hat \rho}\right)\right)
\end{equation}
gradient evaluations to obtain an $(\hat \epsilon, \hat \rho)$-approximate solution of \Cref{LinearConstraints}. Consequently, APF-IAL performs at most
$\mathcal O(\hat \epsilon^{-1/2}\log(1/\hat \epsilon))$
total gradient evaluations to obtain an $(\hat \epsilon,\hat \epsilon)$-approximate solution of \Cref{LinearConstraints}.
\end{theorem}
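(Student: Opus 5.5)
The plan is to reuse the convex-case analysis of APF-IAL and change only the per-iteration inner complexity, which now comes from \Cref{Total Complexity} instead of \Cref{PFARComplexity}.

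\textbf{Step 1: the outer bound is unchanged.} R-FISTA returns a pair satisfying \Cref{OutputRFISTA} with tolerance $\epsilon_k$. Hence \Cref{AR adaptive}(a) holds verbatim: $r_k\in\nabla\psi_s(z_k)+\partial\psi_n(z_k)+\mathcal A^*p_k$ and $\|r_k\|\le\epsilon_k$. To see this, apply \Cref{Total Complexity} to $(\cL^s_{c_k}(\cdot,p_{k-1}),\psi_n)$ and use the identity \Cref{gradient formula} with $c$ replaced by $c_k$. The assumptions of \Cref{Total Complexity} are met for the following reasons:
\begin{itemize}
\item $\cL_{c_k}(\cdot,p_{k-1})$ is $\bar\mu$-strongly convex, because the penalty term is convex.
\item By \Cref{ARtranslation}(a), it is $(\bar L+c_k\|\mathcal A\|^2)$-smooth.
\item The choice $\bar M_0=M_0\le 2\bar L\le 2(\bar L+c_k\|\mathcal A\|^2)$ is admissible, and $\mu_0\ge\bar\mu$ by \Cref{RFISTAAssumption}.
\end{itemize}
The arguments in \Cref{Bound on EpsilonK}, \Cref{PK Bounded}, \Cref{Feasibility Proposition}, \Cref{Penalty Bound Prop} and the proposition giving \Cref{Outer Iterations} use only this inclusion and bound, together with \Cref{ass:A6,ass:A7}. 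None of them depends on the inner solver, so they carry over unchanged. Consequently APF-IAL performs at most $K$ outer iterations, $c_k=c_1\alpha^{k-1}$, and $c_K$ satisfies \Cref{Bound Penalty}, that is, $c_K=\mathcal O(1/\hat\epsilon)$.

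\textbf{Step 2: per-iteration cost.} By \Cref{Total Complexity} and $\epsilon_k\ge\hat\rho$ (\Cref{Bound on EpsilonK}), the $k$-th R-FISTA call costs at most
\[
\mathcal O_1\left(\left\lceil\log_2^{++}(\mu_0/\bar\mu)\right\rceil\left[\sqrt{\frac{\bar L+c_k\|\mathcal A\|^2}{\bar\mu}}\,\log^{++}\left(\frac{D(\bar L+c_k\|\mathcal A\|^2)}{\hat\rho}\right)+\log_2^+\left(\frac{\bar L+c_k\|\mathcal A\|^2}{M_0}\right)\right]\right).
\]
Since $c_k\le c_K=\mathcal O(1/\hat\epsilon)$, the logarithm is bounded uniformly by $\log^{++}\bigl(D(\bar L+\|\mathcal A\|^2/\hat\epsilon)/\hat\rho\bigr)$, up to constants involving $\alpha$, $\|p_0\|$ and the other constants in \Cref{Bound Penalty}, which the $\mathcal O_1$ absorbs. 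The additive term $\log_2^+(\cdot/M_0)$ is dominated by the same logarithm, so after absorbing constants it can be folded into the main term.

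\textbf{Step 3: summation.} Use $\sqrt{a+b}\le\sqrt a+\sqrt b$ and sum over $k=1,\dots,K$ as in the proof of \Cref{Complexity APF-IAL}. This gives
\[
\sum_{k=1}^K\sqrt{\frac{\bar L+c_1\alpha^{k-1}\|\mathcal A\|^2}{\bar\mu}}\le K\sqrt{\frac{\bar L}{\bar\mu}}+\frac{\sqrt\alpha}{\sqrt\alpha-1}\sqrt{\frac{c_K\|\mathcal A\|^2}{\bar\mu}}.
\]
The geometric sum is dominated by its last term, and \Cref{Bound Penalty} turns $\sqrt{c_K}$ into $\mathcal O(1/\sqrt{\hat\epsilon})$, with the $\alpha$-dependent factors absorbed into $\mathcal O_1$. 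Multiplying by the uniform logarithm and by the factor $\lceil\log_2^{++}(\mu_0/\bar\mu)\rceil$ yields \Cref{APFIALFISTAcomplexity}.

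\textbf{Step 4: the final rate.} Set $\hat\rho=\hat\epsilon$. By \Cref{Outer Iterations}, $K=\mathcal O(\log(1/\hat\epsilon))$, so the first term is $\mathcal O(\log^2(1/\hat\epsilon))$, while the second is $\mathcal O(\hat\epsilon^{-1/2}\log(1/\hat\epsilon))$. The total is therefore $\mathcal O(\hat\epsilon^{-1/2}\log(1/\hat\epsilon))$.

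The main bookkeeping obstacle is Step 2: keeping the logarithmic factor and the additive backtracking term uniform in $k$ while letting the $\mathcal O_1$ absorb the $\alpha$- and $\|p_0\|$-dependent constants coming from \Cref{Bound Penalty}.
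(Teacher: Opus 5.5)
Your proposal is correct and follows essentially the same route as the paper. It reuses the solver-independent outer analysis, which gives at most $K$ outer iterations and $c_k=\mathcal O(1/\hat\epsilon)$. It takes the per-call cost from \Cref{Total Complexity} with $\epsilon_k\ge\hat\rho$, and sums over $k$ by the same geometric-series argument as in the proof of \Cref{Complexity APF-IAL}. You are slightly more explicit than the paper in checking the hypotheses of \Cref{Total Complexity} for the AL subproblem, in handling the additive backtracking term $\log_2^+(\cdot/M_0)$ that the paper silently drops, and in noting $K=\mathcal O(\log(1/\hat\epsilon))$ for the final rate.
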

\begin{proof}
It follows from the assumptions of the theorem, \Cref{Total Complexity}, and the fact that
\Cref{Bound on EpsilonK} implies that $\epsilon_k\geq \hat \rho$,
that the call to R-FISTA during the $k$-th iteration of APF-IAL outputs a pair 
$(z_k,r_k)$ that satisfies
 \begin{equation}\label{inclusionRFISTA2}
	 r_{k}\in \nabla \psi_s(z_k)+\partial\psi_n(z_k)+\cA^{*}p_{k}, \quad \|r_k\|\leq \epsilon_k
        \end{equation}
within at most 
\begin{equation}\label{RFISTAAPFInner}
\mathcal O_1\left(\left\lceil\log^{++}_2\left(\mu_0/\bar \mu\right)\right\rceil\left[\sqrt{\frac{\bar L+c_k\|\mathcal A\|^2}{\bar \mu}}\log^{++} \left(\frac{D(\bar L+c_k\|\mathcal A\|^2)}{\hat \rho}\right)\right]\right)
\end{equation}
gradient evaluations.
It then follows from the bound in \Cref{Outer Iterations}
that
APF-IAL performs at most 
\[K:=2+\left\lceil\log_{\alpha}^{+} \left(\max\left\{\frac{4\alpha^2 D(M_n+\nabla_s+\tilde\epsilon_1)}{\hat \epsilon (c_1\bar d\sigma^{+}_{\mathcal A})}, \frac{2\alpha^2\|p_0\|}{\hat \epsilon c_1}\right\}\right)\right\rceil\]
outer iterations
to find an $(\hat \epsilon,\hat \rho)$-approximate solution of \Cref{LinearConstraints}.
Moreover, the bound on $c_k$ in \Cref{Bound Penalty} implies that
$c_k=\mathcal O(1/\hat \epsilon)$. It then follows from summing the complexity bound in \Cref{RFISTAAPFInner} from $k=1$ to $K$ (using a similar argument as in the proof of \Cref{Complexity APF-IAL})
that the number of gradient evaluations that APF-IAL performs to find a $(\hat \epsilon,\hat \rho)$ solution is on the order of the quantity in \Cref{APFIALFISTAcomplexity}. The last conclusion of \Cref{thm:RFISTAPFLCompl} then follows from this bound and setting $\hat \rho=\hat \epsilon$.
\end{proof}

\section{Numerical Experiments}\label{NumericalExperiments}
This section compares the numerical performance of our inexact methods O-IAL (\Cref{alg:O-IAL}) and APF-IAL (\Cref{alg:APF-IAL}) with that of a representative PAL method, which we refer to as ProxALM (Algorithm~5 in \cite{LuM}), on six classes of linearly constrained convex optimization problems, including both convex and strongly convex instances. To make the comparison as fair as possible, we use exactly the same termination criterion for all three methods. Specifically, each method terminates when it produces a triple $(z,p,r)$ satisfying
\begin{equation}\label{CompCriterion}
 r \in \nabla \psi_s(z)+\partial \psi_n(z)+\cA^{*}p, \quad \|r\|\leq
\hat \rho, \quad \|\cA z-b\|\leq \hat \epsilon.
\end{equation}
In all our experiments, we set $\hat \rho=\hat \epsilon=10^{-5}$. We vary the strong convexity modulus $\bar \mu$, where $\bar \mu=0$ corresponds to the convex setting.

We next describe the implementation details of the three methods considered in our numerical experiments. ProxALM takes as input $\epsilon=10^{-5}$, a randomly generated initial primal-dual point $(x_0,\lambda_0)\in \Dn\times \mathbb R^{m}$, $M=10$, $\delta=0.8$, the initial penalty parameter $\rho_0=\max\left\{10,\bar \mu+\sqrt{\bar \mu^2+4}\right\}$, $\alpha_0=1$, the initial tolerance $\eta_0=10$ for the first proximal AL subproblem, the penalty-parameter increase factor $\zeta=1.1$, and the proximal AL subproblem tolerance decrease factor $\sigma=1/(\zeta+0.4)=2/3$. At every iteration $k\geq 0$, the penalty parameter and proximal AL subproblem tolerance are updated according to $\rho_k=\rho_0\zeta^{k}$ and $\eta_k=\eta_0\sigma^{k}$, respectively.

For many of our test problems $\bar \mu<5$, and hence $\rho_0=10$. Some problems have $\bar \mu\geq 5$, in which case $\rho_0=\bar \mu+\sqrt{\bar \mu^2+4}$. Thus, in all cases, the required condition $\rho_0>\left(\bar \mu+\sqrt{\bar \mu^2+4}\right)/2$ is satisfied. As prescribed in step~2 of ProxALM, we use the strongly convex version of FISTA presented in Algorithm~2 of \cite{LuM} to approximately minimize each proximal augmented Lagrangian subproblem. Algorithm~2 is implemented exactly as specified in \cite{LuM}. Finally, we modify the termination criterion in step~4 of ProxALM so that it uses the common criterion in \Cref{CompCriterion}.

Since ProxALM uses a strongly convex APG/FISTA method to minimize its PAL subproblems, O-IAL and APF-IAL use R-FISTA (\Cref{alg:RFISTA}) to minimize their AL subproblems. Both methods choose the R-FISTA-specific input parameter
$\chi=0.001$ and use $L_j=\beta L_j$, where $\beta=1.25$ instead of $2$. In particular, $1/\beta=0.8$, which matches the value $\delta=0.8$ used by ProxALM. They also use the initial strong convexity estimate $\mu_0=2(\psi_s(w_0)-\psi_s(z_0)-\inner{\nabla \psi_s(z_0)}{w_0-z_0})/(\|w_0-z_0\|^2)$, where $w_0,z_0\in \Dn$ and $z_0$ is the initial point. When $\psi_n$ is the indicator function of a closed convex set, this choice satisfies $\mu_0\geq \bar \mu$.

O-IAL uses the same randomly generated initial primal and multiplier points as ProxALM, together with the fixed penalty parameter $c=10$, the tolerances $\hat \epsilon=10^{-5}$ and $\hat \rho=10^{-5}$, and a domain-diameter parameter $D>0$. When the diameter of $\Dn$ can be estimated from the problem data, we use this estimate for $D$. Otherwise, if the diameter cannot be readily estimated or $\Dn$ is not necessarily compact, we set $D=1$. Hence, some experiments intentionally test the algorithms beyond the theoretical setting considered in this paper, which assumes that $\Dn$ is compact.

APF-IAL uses the same initial primal-dual point as O-IAL and ProxALM, the initial penalty parameter $c_1=10$, the penalty-parameter increase factor $\alpha=1.1$, $\omega=1.5$, and the initial tolerance $\tilde \epsilon_1=10$. Here, $1/\omega=2/3$ matches the value $\sigma=2/3$ used by ProxALM, while $\tilde \epsilon_1=10$ matches its initial tolerance $\eta_0=10$.

All experiments are performed in MATLAB R2025b on a 2023 MacBook Pro equipped with an 8-core CPU. The latest codes to reproduce the experiments are available at this
\href{https://github.com/asujanani6/Parameter_Free_ALProj_Codes}
{clickable-link} or with URL 
\url{https://github.com/asujanani6/Parameter_Free_ALProj_Codes}.
 Each of the following six subsections reports numerical results for a different class of linearly constrained problems.

\subsection{Quadratic Objective over the Simplex}
\label{QuadraticExp}

Given dimensions $(m,n)\in\mathbb N^2$, scalars
$(\xi,\tau)\in\mathbb R_{++}^2$, matrices
$A,C\in\mathbb R^{m\times n}$ and $B\in\mathbb R^{n\times n}$,
a positive diagonal matrix $D\in\mathbb R^{n\times n}$, and vectors
$d,b\in\mathbb R^m$, we consider
\[
\begin{aligned}
\min_x\quad
&
\psi_s(x)
:=
\frac{\xi}{2}\|DBx\|^2
+
\frac{\tau}{2}\|Cx-d\|^2
\\
\text{s.t.}\quad
&
x\in\Delta^n,
\qquad
Ax=b,
\end{aligned}
\]
where
\[
\Delta^n
:=
\left\{
x\in\mathbb R_+^n:
\boldsymbol{1}_n^\top x=1
\right\}
\]
is the probability simplex (and hence where $\boldsymbol{1}_n$ is the vector of all ones).
We vary the dimensions over
$10\leq m\leq700$ and $50\leq n\leq1100$, and use target
curvatures
$\bar L\in\{200,500,1000\}$ and
$\bar\mu\in\{5,10,100\}$.
The entries of $A$, $C$, $d$, and a seed matrix $\widetilde B$
are independently sampled from $\mathcal U[0,1]$, while the
diagonal entries of $D$ are sampled uniformly from
$\{1,\ldots,\alpha\}$. The matrix $B$ and the coefficients
$(\xi,\tau)$ are then spectrally calibrated so that
$\lambda_{\max}(\nabla^2\psi_s)=\bar L$ and
$\lambda_{\min}(\nabla^2\psi_s)=\bar\mu$.
All tested instances in this subsection are strongly convex.
We set
$
b=A(\boldsymbol{1}_n/n),
$
and generate
$x_0=\widehat x/(\boldsymbol{1}_n^\top\widehat x)$, where the
entries of $\widehat x$ are independently sampled from
$\mathcal U[0,1]$. 
The numerical comparisons are
reported in \Cref{quadraticLC}.

As seen from \Cref{quadraticLC}, O-IAL and APF-IAL were, on average, $7.24$ and $52.37$ times faster than ProxALM, respectively. O-IAL was faster than ProxALM on $13$ instances, slower on one, and tied on one. It also consistently achieved dual-stationarity residuals on the order of $10^{-10}$ as it maintains high dual stationarity at each of its iterations. APF-IAL was faster than ProxALM on all 15 instances.

\begin{table}[H]
\centering
\caption{Quadratic Objective Over Simplex Comparison}
\resizebox{\columnwidth}{!}
{
\input{table_QuadraticLC.tex}
}
\label{quadraticLC}
\end{table}

\subsection{Logistic Regression over an $\ell_1$ Ball}
\label{LogRegExp}

Given a number of observations $N\in\mathbb N$, dimensions
$(m,n)\in\mathbb N^2$, a data matrix $X\in\mathbb R^{N\times n}$,
labels $y\in\{-1,1\}^N$, a constraint matrix
$A\in\mathbb R^{m\times n}$, and $b\in\mathbb R^m$, we consider
\[
\begin{aligned}
\min_x\quad
&
\psi_s(x)
:=
\frac{1}{N}
\sum_{i=1}^N
\log\left(1+\exp(-y_iX_i^\top x)\right)
+
\frac{\bar\mu}{2}\|x\|^2
\\
\text{s.t.}\quad
&
x\in\mathbb B_1^n,
\qquad
Ax=b,
\end{aligned}
\]
where $X_i^\top$ denotes the $i$-th row of $X$ and
\[
\mathbb B_1^n
:=
\left\{
x\in\mathbb R^n:\|x\|_1\leq1
\right\}
\]
is the $\ell_1$ ball.
In our experiments, we vary parameters $50\leq N\leq650$, $50\leq m\leq1000$, and
$100\leq n\leq2500$, and we vary the target curvature pair from
$30\leq\bar L\leq1000$ and $0\leq\bar\mu\leq5$. Hence, the instances with
$\bar\mu=0$ are convex, while those with $\bar\mu>0$ are strongly
convex.
We generate the matrix $X$ according to
\[
X
=
\frac{\sqrt{4N(\bar L-\bar\mu)}}{\|\widetilde X\|_2}
\widetilde X
\]
where $\widetilde X$ is a fully dense Gaussian matrix.
The entries of $y$ are independently and uniformly sampled from
$\{-1,1\}$.

We set our initial point $x_0=0$. The entries of the second to $m$-th row of $A$ are independently sampled from the standard normal
distribution, while its first row is generated as
$
A_{1,:}=2\boldsymbol{1}_n^\top.
$
Finally, we set
$
b
=
A\left(\boldsymbol{1}_n/2n\right).
$
The numerical comparisons are reported in \Cref{LogisticReg}.

\begin{table}[H]
\centering
\caption{Logistic Regression Comparison}
\resizebox{\columnwidth}{!}
{
\input{table_LogReg.tex}
}
\label{LogisticReg}
\end{table}

O-IAL and APF-IAL were, on average, $8.30$ and $34.99$ times faster than ProxALM, respectively, with O-IAL and APF-IAL outperforming ProxALM on all $15$ instances. ProxALM often reduced the primal-feasibility residual far below the prescribed tolerance of $10^{-5}$, whereas APF-IAL provided a better balance between primal feasibility, dual stationarity, and computational cost.

\subsection{Elastic-Net Least-Squares Regression}
\label{ElasticNetExp}

Given a number of observations $N\in\mathbb N$, dimensions
$(m,n)\in\mathbb N^2$, a data matrix $X\in\mathbb R^{N\times n}$,
a response vector $y\in\mathbb R^N$, a constraint matrix
$A\in\mathbb R^{m\times n}$, and $b\in\mathbb R^m$, we consider
\[
\begin{aligned}
\min_x\quad
&
\psi_s(x)+\psi_n(x)
:=
\frac{1}{2N}\|Xx-y\|^2
+
\frac{\bar\mu}{2}\|x\|^2
+
\frac{1}{\sqrt{n}}\|x\|_1
\\
\text{s.t.}\quad
&
Ax=b,
\end{aligned}
\]
We vary $300\leq N\leq1560$, $50\leq m\leq1000$,
$100\leq n\leq1500$, and the target curvature pair over
$15\leq\bar L\leq480$ and $0\leq\bar\mu\leq5$.
The matrix $X$ is generated as
\[
X
=
\frac{\sqrt{N(\bar L-\bar\mu)}}{\|\widetilde X\|_2}
\widetilde X
\]
where $\widetilde X \in\mathbb R^{N\times n}$ is a fully dense but rank-deficient Gaussian design matrix.

A sparse reference vector $x_{\mathrm{ref}}$ is generated with
approximately $10\%$ nonzero entries and normalized to have unit
Euclidean norm. We generate
$y=Xx_{\mathrm{ref}}+\sigma\varepsilon$, where $\varepsilon$ is a
standard Gaussian vector and
$\sigma=0.05\max\{\|Xx_{\mathrm{ref}}\|/\sqrt N,1\}$.
The initial point is
$
x_0
=
x_{\mathrm{ref}}
+
(\boldsymbol{1}_n/\sqrt n).
$
The matrix $A$ is initially Gaussian, after which its first row is
shifted along $x_0-x_{\mathrm{ref}}$ so that
$A_{1,:}(x_0-x_{\mathrm{ref}})=1$. Finally,
$b=Ax_{\mathrm{ref}}$. The results are reported in
\Cref{ElasticNetTable}.

\begin{table}[H]
\centering
\caption{Elastic-Net Least Squares Regression Comparison}
\resizebox{\columnwidth}{!}
{
\input{table_ML1_ElasticNet.tex}
}
\label{ElasticNetTable}
\end{table}
O-IAL and APF-IAL were, on average, $4.31$ and $55.14$ times faster than ProxALM, respectively, with O-IAL faster on all $15$ instances. This problem class produced the largest average speedup for APF-IAL among the six classes.

\subsection{Group-Sparse Huberized-Hinge SVM}
\label{HuberizedSVMExp}

Given a number of observations $N\in\mathbb N$, dimensions
$(m,n)\in\mathbb N^2$, a data matrix $X\in\mathbb R^{N\times n}$,
labels $y\in\{-1,1\}^N$, a constraint matrix
$A\in\mathbb R^{m\times n}$, and $b\in\mathbb R^m$, we consider
\[
\begin{aligned}
\min_x\quad
& \psi_s(x)+\psi_n(x):=
\frac{1}{N}
\sum_{i=1}^N
h_\delta\left(1-y_iX_i^\top x\right)
+
\frac{\bar\mu}{2}\|x\|^2
+
\lambda\sum_{G\in\mathcal G}\|x_G\|_2
\\
\text{s.t.}\quad
&
Ax=b,
\end{aligned}
\]
where $\delta=2$, $\lambda=1/\sqrt n$, and $\mathcal G$ partitions
the variables into contiguous groups of at most ten coordinates.
The Huberized-hinge loss is defined by
\[
h_\delta(t)
=
\begin{cases}
0, & t\leq0,\\[1mm]
t^2/(2\delta), & 0<t<\delta,\\[1mm]
t-\delta/2, & t\geq\delta.
\end{cases}
\]
We vary parameters $260\leq N\leq7000$, $100\leq m\leq2000$,
$200\leq n\leq4000$, and the target curvature pair over
$20\leq\bar L\leq800$ and $0\leq\bar\mu\leq5$. The matrix $X$ is generated in a similar way as the previous two problem classes.

A group-sparse reference classifier $x_{\mathrm{ref}}$ is generated
by activating approximately $20\%$ of the groups and normalizing the
result to have unit Euclidean norm. Binary labels $y$ are obtained by
adding Gaussian noise with standard deviation
$0.1\max\{\|Xx_{\mathrm{ref}}\|/\sqrt N,1\}$ to
$Xx_{\mathrm{ref}}$ and taking componentwise signs. We set initial point
$x_0=0$.

Each row of constraint matrix $A$ compares the average linear scores of two synthetic
cohorts. The first row compares the highest- and lowest-scoring
cohorts under $x_{\mathrm{ref}}$ and is scaled so that
$A_{1,:}x_{\mathrm{ref}}=1$. The remaining rows use disjoint,
randomly paired cohorts and are normalized to have unit Euclidean
norm. Finally, $b=Ax_{\mathrm{ref}}$. The numerical comparisons are reported in
\Cref{HuberizedSVMTable}.

\begin{table}[H]
\centering
\caption{Group-Sparse Huberized-Hinge SVM Comparison}
\resizebox{\columnwidth}{!}
{
\input{table_ML2_HuberizedSVM.tex}
}
\label{HuberizedSVMTable}
\end{table}

APF-IAL was, on average, $15.42$ times faster than ProxALM, whereas the average speedup factor of O-IAL relative to ProxALM was $0.94$, indicating that ProxALM was marginally faster on average than O-IAL. However, O-IAL consistently achieved much smaller dual-stationarity residuals of approximately $2.5\times 10^{-10}$ than ProxALM. APF-IAL was faster than ProxALM on all 15 instances considered.

\subsection{Bayesian $D$-Optimal Experimental Design}
\label{BayesianDOptimalExp}
Given an information-matrix dimension $N\in\mathbb N$, dimensions
$(m,n)\in\mathbb N^2$, candidate experiment vectors
$v_1,\ldots,v_n\in\mathbb R^N$, a constraint matrix
$A\in\mathbb R^{m\times n}$, and $b\in\mathbb R^m$, we consider
\[
\begin{aligned}
\min_x\quad
&
\psi_s(x)
:=
-\gamma\log\det\left(
I_N+\sum_{i=1}^n x_i v_i v_i^\top
\right)
+
\frac{\bar\mu}{2}\|x\|^2
\\
\text{s.t.}\quad
&
x\in\mathcal B^n,
\qquad
Ax=b,
\end{aligned}
\]
where
\[
\mathcal B^n
:=
\left[0,\frac{2}{\sqrt n}\right]^n.
\]

We vary $3\leq N\leq17$, $5\leq m\leq200$,
$25\leq n\leq300$, and the target curvature pair over
$5\leq\bar L\leq100$ and
$0\leq\bar\mu\leq5\times10^{-2}$. The dimensions satisfy
$n>N(N+1)/2$.

The candidate vectors are independently sampled from the standard
normal distribution and normalized so that $\|v_i\|=1/2$. Letting
$V=[v_1\ \cdots\ v_n]\in\mathbb R^{N\times n}$ and
$Q_0=V^\top V$, we choose
\[
\gamma
=
\frac{\bar L-\bar\mu}
{\lambda_{\max}(Q_0\circ Q_0)},
\]
where $\circ$ denotes the Hadamard product. 
We set
initial point $x_0=0$.
The first row of $A$ imposes the total-budget equality
$\boldsymbol{1}_n^\top x=1$. The remaining rows impose category
budgets on a random partition of all but one of the candidate
experiments into disjoint experiment families. Finally, we set
$b=A(\boldsymbol{1}_n/n)$. The results are reported in
\Cref{DOptimalTable}.

\begin{table}[H]
\centering
\caption{Bayesian Optimal Design Comparison}
\resizebox{\columnwidth}{!}
{
\input{table_ML3_BayesianDOptimal.tex}
}
\label{DOptimalTable}
\end{table}

O-IAL and APF-IAL were, on average, $1.78$ and $16.79$ times faster than ProxALM, respectively. O-IAL was faster than ProxALM on $10$ of the $15$ instances, while APF-IAL was faster than ProxALM on all 15 instances.
\subsection{Huberized Quantum State Tomography Semidefinite Program (SDP)}
\label{QuantumTomographyExp}

Let $\mathbb H^d$ denote the space of $d\times d$ complex Hermitian
matrices, equipped with the inner product
$\langle U,V\rangle_F:=\operatorname{Re}\operatorname{tr}(U^*V)$,
and set $n=d^2$. 
Given measured observables $O_1,\ldots,O_N\in\mathbb H^d$,
measurements $y\in\mathbb R^N$, a constraint matrix
$A\in\mathbb R^{m\times d^2}$, and $b\in\mathbb R^m$, we consider
\[
\begin{aligned}
\min_{\rho\in\mathbb H^d}\quad
&
\psi_s(\rho)+\psi_n(\rho):=\frac{\gamma}{N}\sum_{i=1}^N
h_\delta\left(\langle O_i,\rho\rangle_F-y_i\right)
+
\frac{\bar\mu}{2}\|\rho\|_F^2
+
\underbrace{\iota_{\mathcal D_d}(\rho)}_{\psi_n(\rho)}
\\
\text{s.t.}\quad
&
A\operatorname{svec}_{\mathbb H}(\rho)=b,
\end{aligned}
\]
where $\iota_{\mathcal D_d}$ is the indicator function of the
density-matrix spectrahedron and $h_{\delta}(t)$ is defined as follows:
\[
\mathcal D_d
:=
\left\{
\rho\in\mathbb H^d:
\rho\succeq0,\ 
\operatorname{tr}(\rho)=1
\right\}, \quad h_\delta(t)
:=
\begin{cases}
t^2/(2\delta), & |t|\leq\delta,\\
|t|-\delta/2, & |t|>\delta.
\end{cases}
\]
The operator $\operatorname{svec}_{\mathbb H}(H)$ is a linear isometry that takes in a matrix $H= (H_{ij})\in\mathbb H^d $ as input
and outputs a vector $\mathbb R^{d^2}$:
\[
\operatorname{svec}_{\mathbb H}(H)
:=
\left[
H_{11},\ldots,H_{dd},\,
\big(\sqrt{2}\operatorname{Re}(H_{ij})\big)_{i<j},\,
\big(\sqrt{2}\operatorname{Im}(H_{ij})\big)_{i<j}
\right]^\top.
\]

We set $\delta=0.5$ and vary $60\leq N\leq970$, $8\leq m\leq86$, and
$12\leq d\leq64$. Hence, $n$ satisfies $144\leq n\leq4096$.
The target curvatures satisfy
$14\leq\bar L\leq630$ and $0\leq\bar\mu\leq5$.

Each $O_i$ is generated as a dense, traceless Hermitian matrix and
normalized so that $\|O_i\|_F=1$. We set
$\gamma
=
[N\delta(\bar L-\bar\mu)]/\|B\|_2^2,
$
where
$B\in\mathbb R^{N\times d^2}$ has rows
$
B_i
=
\operatorname{svec}_{\mathbb H}(O_i)^\top.
$
A normalized complex Gaussian vector
$g\in\mathbb C^d$ is generated, and the reference density matrix and
measurements are defined by $\rho_{\mathrm{ref}}
=
0.35\,gg^*
+
0.65\,\frac{I_d}{d}$ and $y_i
=
\langle O_i,\rho_{\mathrm{ref}}\rangle_F+\varepsilon_i$, where $\varepsilon_i\sim\mathcal U[-0.02,0.02].$
The initial point is the maximally mixed state $\rho_0=I_d/d$.
The matrix $A$ is initially Gaussian, after which its first row is
shifted along
$\operatorname{svec}_{\mathbb H}(\rho_0-\rho_{\mathrm{ref}})$, and
we set
$
b
=
A\operatorname{svec}_{\mathbb H}(\rho_{\mathrm{ref}}).
$
The results are reported in \Cref{QuantumTomographyTable}.

\begin{table}[H]
\centering
\caption{Huberized Quantum State Tomography Comparison}
\resizebox{\columnwidth}{!}
{
\input{table_quantSDP4_QuantumTomography.tex}
}
\label{QuantumTomographyTable}
\end{table}
O-IAL and APF-IAL were, on average, $18.30$ and $35.24$ times faster than ProxALM, respectively. O-IAL consistently attained dual-stationarity residuals near $1.7\times10^{-10}$, while APF-IAL achieved the smallest runtimes and kept both residuals within the requested tolerances.

\subsection{Summary of Numerical Results}
In conclusion, APF-IAL was the fastest method in these experiments and struck the best balance between achieving small primal-feasibility and dual-stationarity residuals. Overall, O-IAL was the second-fastest method and frequently achieved exceptionally small dual-stationarity residuals. This behavior is consistent with its design: O-IAL explicitly maintains approximate stationarity throughout its iterations and consequently produces dual residuals near $10^{-10}$, despite the requested tolerance being only $10^{-5}$.

ProxALM was the least computationally efficient of the three methods. Although it often reduced the primal-feasibility residual well below the prescribed threshold, its main bottleneck was efficiently reducing the dual-stationarity residual. This behavior can be explained by the distinction between AL and PAL methods. After the multiplier update in an AL method, approximate stationarity of the AL subproblem directly controls the dual stationarity residual of the original linearly constrained problem at the updated multiplier. By contrast, the stationarity condition for a PAL subproblem contains an additional proximal-displacement term. Consequently, accurate minimization of the PAL subproblem does not translate as directly into a small stationarity residual for the original problem unless the proximal displacement is also sufficiently small. This helps explain why ProxALM had greater difficulty efficiently reducing the dual-stationarity residual.

\section{Conclusion}
In this paper, we developed three inexact augmented Lagrangian methods, O-IAL, OPF-IAL, and APF-IAL, that attain an optimal complexity bound of $\mathcal O(\epsilon^{-1})$ in the convex setting and a near-optimal bound of $\mathcal O(\epsilon^{-1/2}\log(\epsilon^{-1}))$ in the strongly convex setting. Among these methods, OPF-IAL and APF-IAL are parameter-free. To solve the inexact AL subproblems, our methods employ either PF-AR or R-FISTA, both of which are accelerated parameter-free methods developed in this paper. Finally, numerical experiments on six important classes of problems demonstrate that O-IAL and APF-IAL are often $5$--$50$ times faster than a representative proximal augmented Lagrangian method.

\newpage
\appendix

\section{Proof of \Cref{AR-LComplex}}\label{ProofAR-L}

The following lemma characterizes properties of the output $(\xpp{s},\xp{s},M_s)$ of the Backtracking function.
\begin{lemma}
   For every $s\geq 1$, the following relations hold:
   \begin{align}
       &\xpp{s}:=x^{++}(2M_s,\sigma_s,\bar x_s;x_s)\label{eq1}\\
       &\xp{s}:=x^{+}(2M_s;x_s):=\argmin_{u} \inner{\nabla \psi_s(x_s)}{u}+\psi_n(u)+M_s\|u-x_s\|^2\label{eq2}\\
       & \psi_s(x_s^{++})-\psi_s(x_s)-\inner{\nabla \psi_s(x_s)}{x_s^{++}-x_s}\leq \frac{M_s}{2}\|\xpp{s}-x_s\|^2\label{eq3}\\
       & \|\nabla \psi_s(\xp{s})-\nabla \psi_s(x_s)\|\leq 2M_{s}\|x_s^{+}-x_s\|\label{eq4}\\
       & M_0\leq M_{s}\leq \max\left\{M_0, 2\bar L\right\}\label{eq5}.
   \end{align}
\end{lemma}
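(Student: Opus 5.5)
The plan is to read off the first four relations from the termination rule of the Backtracking routine (\Cref{alg:_searchL}). Then \Cref{eq5} follows by a standard doubling argument, combined with induction over $s$.

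\textbf{The first four relations.} Fix $s\geq 1$. By step~\ref{ARL11} of AR-L, the triple $(\xpp{s},\xp{s},M_s)$ is the output of Backtracking called with $(\psi_s,\psi_n,\sigma_s,x_s,\bar x_s,M_{s-1})$.

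\begin{itemize}
\item Backtracking terminates at some index $j$ and returns $M=M_j$, $x^{++}=x^{++}(2M_j,\sigma_s,\bar x_s;x_s)$ and $x^{+}=x^{+}(2M_j;x_s)$. This gives \Cref{eq1} and \Cref{eq2} directly from steps~\ref{Back2}--\ref{Back3}.
\item \Cref{eq3} is exactly \Cref{linesearch1} at the accepted index.
\item \Cref{eq4} follows from \Cref{linesearch2}, which gives the sharper bound $M_s\|\xp{s}-x_s\|$; the stated inequality with $2M_s$ is then trivial.
\end{itemize}

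What must be justified is that the loop terminates, and that bound is also what drives \Cref{eq5}.

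\textbf{Termination once $M_j\geq \bar L$.} Suppose $M_j\geq \bar L$. By $\bar L$-smoothness of $\psi_s$ (the descent lemma, from \Cref{ineq:uppercurvature1}),
\[
\psi_s(y)-\psi_s(x)-\inner{\nabla\psi_s(x)}{y-x}\leq \tfrac{\bar L}{2}\|y-x\|^2 .
\]
Applying this with $y=x^{++}$ gives \Cref{linesearch1}. \Cref{ineq:uppercurvature1} applied at $x^{+}$ gives \Cref{linesearch2}. Hence the loop stops at the first $j$ with $M_j\geq\bar L$ at the latest. Since $M_j=2^jM_{\rm in}$ with $M_{\rm in}=M_{s-1}>0$, such a $j$ exists and the loop is finite.

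\textbf{Bounds on the output $M$.} If the loop stops at $j=0$, the output is $M=M_{\rm in}$. Otherwise the previous trial $M_{j-1}$ was rejected, so $M_{j-1}<\bar L$, and hence $M=2M_{j-1}<2\bar L$. In either case
\[
M_{\rm in}\leq M\leq \max\{M_{\rm in},2\bar L\}.
\]

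\textbf{Induction for \Cref{eq5}.} The base case is the trivial statement $M_0\leq M_0\leq\max\{M_0,2\bar L\}$. Assume $M_0\leq M_{s-1}\leq\max\{M_0,2\bar L\}$. The previous step gives
\[
M_0\leq M_{s-1}\leq M_s\leq \max\{M_{s-1},2\bar L\}\leq \max\{M_0,2\bar L\},
\]
which is \Cref{eq5}.

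The only step needing care is the termination and doubling argument. In particular, the two conditions \Cref{linesearch1,linesearch2} are evaluated at different points ($x^{++}$ and $x^{+}$), so both must be verified from the global $\bar L$-smoothness rather than from a single local test. Everything else is bookkeeping.
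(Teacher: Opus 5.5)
Your proposal is correct and follows essentially the same route as the paper. You read off the first four relations from the trial points and acceptance tests of Backtracking. You then note that both tests pass as soon as the trial constant reaches $\bar L$, by global $\bar L$-smoothness, so the doubling yields $M_{s-1}\leq M_s\leq \max\{M_{s-1},2\bar L\}$, and you finish by induction on $s$. Your version is slightly more explicit than the paper's: it observes that the last rejected trial satisfies $M_{j-1}<\bar L$, and that the second test actually gives the sharper constant $M_s$ in place of $2M_s$.
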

\begin{proof}
    Relations \Cref{eq1,eq2}
    follow immediately from the way $x^{++}$ and $x^{+}$ are computed in steps \ref{Back2} and \ref{Back3} of Backtracking and the fact that the $s$-th iteration of \Cref{alg:ARL} calls 
    Backtracking with inputs ($\psi_s$, $\psi_n$, $\sigma_s$, $x_s$, $M_{s-1}$) and outputs $(x_{s}^{++},x_s^{+},M_s):=(x^{++},x^{+},M)$.
    Likewise, relations \Cref{eq3,eq4}
    follow immediately from the checks that Backtracking performs in \Cref{linesearch1,linesearch2}.

    To see \Cref{eq5}, observe that \Cref{eq3,eq4}
    in Backtracking always hold if $M_{j}\geq \bar L$
    for some $j$ generated during Backtracking
    because $\psi_s$ is assumed to be $\bar L$-smooth. 
    It follows from this fact, the fact that $M_{j+1}$ is set inside of Backtracking as $M_{j+1}=2M_{j}$ if \Cref{eq3,eq4} do not hold, and the fact that \Cref{alg:ARL} calls Backtracking with $M_0=M_{s-1}$ that 
    $M_{s-1}\leq M_{s}\leq \max\left\{M_{s-1}, 2\bar L\right\}$.
    Relation \Cref{eq5} then follows from this fact and a simple induction argument.
\end{proof}
The following lemma provides key bounds which involve the gradient mapping. We omit the proof of the lemma since the proof is identical to the proofs of (3.5) and (3.6) in \cite{AL27Lan} with $(\eta,\sigma,\bar x,x)$
replaced by $(2M_{s},\sigma_{s},\bar x_{s},x_{s})$.
\begin{lemma}
For every $s\geq 1$, we have that
\begin{align}
    &\|x^{+}(2M_s+\sigma_s;x_s)-x^{++}(2M_s,\sigma_s,\bar x_s;x_s)\|\leq \frac{\sigma_s}{2M_s+\sigma_s}\|x_s-\bar x_s\|;\label{GM1}\\
    &\|G_{2M_s}(x_s)\|\leq \|G_{\sigma_s+2M_s}(x_s)\|\label{GM2}.
\end{align}
\end{lemma}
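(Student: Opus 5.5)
The two bounds (GM1) and (GM2) are statements about proximal mappings of $\psi_n$ at the single point $x_s$. My plan is to prove them for a generic point $x$ and generic parameters $\eta,\sigma>0$, $\bar x$, and then set $(\eta,\sigma,\bar x,x)=(2M_s,\sigma_s,\bar x_s,x_s)$. Write $g:=\nabla\psi_s(x)$ throughout.

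\emph{Proof of (GM1).} Complete the square in the objective defining $x^{++}(\eta,\sigma,\bar x;x)$:
\[
\tfrac{\sigma}{2}\|u-\bar x\|^2+\tfrac{\eta}{2}\|u-x\|^2=\tfrac{\eta+\sigma}{2}\|u-w\|^2+\text{const},\qquad w:=\frac{\eta x+\sigma\bar x}{\eta+\sigma}.
\]
Hence $x^{++}=\mathrm{prox}_{\psi_n/(\eta+\sigma)}\bigl(w-g/(\eta+\sigma)\bigr)$. Directly from \eqref{Gradient Mapping}, $x^{+}(\eta+\sigma;x)=\mathrm{prox}_{\psi_n/(\eta+\sigma)}\bigl(x-g/(\eta+\sigma)\bigr)$. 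The proximal map of a closed proper convex function is nonexpansive, so
\[
\|x^+(\eta+\sigma;x)-x^{++}\|\le\|x-w\|=\frac{\sigma}{\eta+\sigma}\|x-\bar x\|,
\]
which is (GM1) with $\eta=2M_s$.

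\emph{Proof of (GM2).} This is the standard monotonicity of the gradient-mapping norm in the parameter. Fix $0<\eta_1\le\eta_2$, and set $u_i:=x^+(\eta_i;x)$ and $d_i:=x-u_i$. The optimality conditions give $\eta_i d_i-g\in\partial\psi_n(u_i)$. Monotonicity of $\partial\psi_n$ then yields $\langle \eta_1d_1-\eta_2d_2,\,u_1-u_2\rangle\ge0$. Since $u_1-u_2=d_2-d_1$, this rearranges to
\[
\eta_1\|d_1\|^2+\eta_2\|d_2\|^2\le(\eta_1+\eta_2)\langle d_1,d_2\rangle\le(\eta_1+\eta_2)\|d_1\|\|d_2\|.
\]
Set $a=\|d_1\|$ and $b=\|d_2\|$. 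The inequality becomes $(\eta_1a-\eta_2b)(a-b)\le0$, so either $a=b$ or $\eta_1a$ and $\eta_2 b$ lie on the appropriate sides. I would run through the two cases:
\begin{itemize}
\item If $a\ge b$, then $\eta_1a\le\eta_2b$.
\item If $a<b$, then $\eta_1a<\eta_1b\le\eta_2b$.
\end{itemize}
In both cases $\|G_{\eta_1}(x)\|=\eta_1a\le\eta_2b=\|G_{\eta_2}(x)\|$. Applying this with $\eta_1=2M_s$ and $\eta_2=2M_s+\sigma_s$ gives (GM2).

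The only delicate point is the sign bookkeeping in this last case analysis. The rest is the direct algebra of the prox characterizations, which I expect to be routine.
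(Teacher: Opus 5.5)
Your proposal is correct and follows the same route as the paper, which omits the proof and defers to the generic arguments for (3.5)--(3.6) in Lan et al.\ with $(\eta,\sigma,\bar x,x)=(2M_s,\sigma_s,\bar x_s,x_s)$. Those arguments are nonexpansiveness of the proximal map after completing the square, and the standard monotonicity of $\|G_\eta(x)\|$ in $\eta$ obtained from monotonicity of $\partial\psi_n$, exactly as you give them; the only nit is that in your case $a\ge b$ the subcase $a=b$ follows from $\eta_1\le\eta_2$ rather than from the factored inequality, which does not affect the conclusion.
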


The next lemma characterizes key descent properties of $x_{s}^{++}$.

\begin{lemma}\label{KeyDescent}
For every $s\geq 1$, we have that the following relation holds:
\begin{align}
&\left[\psi_s(x_s^{++})+\psi_n(x_s^{++})+\frac{\sigma_s}{2}\|x_s^{++}-\bar x_s\|^2\right]-\left[\psi_s(u)+\psi_n(u)+\frac{\sigma_s}{2}\|u-\bar x_s\|^2 \right]\nonumber\\
&\leq M_s\|u-x_s\|^2-\frac{\sigma_s+2M_s}{2}\|u-x_s^{++}\|^2-\frac{M_s}{2}\|x_s^{++}-x_s\|^2 \quad \forall u\in \Dn.\label{DescentIneq}
\end{align}
\end{lemma}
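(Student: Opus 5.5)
The plan is a standard three-point argument for a proximal step with a curvature-controlled linearization. By \Cref{eq1}, $x_s^{++}$ minimizes the strongly convex function
\[
h(u):=\inner{\nabla \psi_s(x_s)}{u}+\psi_n(u)+\frac{\sigma_s}{2}\|u-\bar x_s\|^2+M_s\|u-x_s\|^2 .
\]
The term $\psi_n$ is convex, and the quadratic part has modulus $\sigma_s+2M_s$, so $h$ is $(\sigma_s+2M_s)$-strongly convex. The first step is to write its optimality condition as the quadratic-growth inequality
\[
h(u)\geq h(x_s^{++})+\frac{\sigma_s+2M_s}{2}\|u-x_s^{++}\|^2\quad \forall u\in\Dn .
\]
Expanding, and adding $\psi_s(x_s)-\inner{\nabla\psi_s(x_s)}{x_s}$ to both sides, gives
\[
\ell_{\psi_s}(x_s^{++};x_s)+\psi_n(x_s^{++})+\frac{\sigma_s}{2}\|x_s^{++}-\bar x_s\|^2+M_s\|x_s^{++}-x_s\|^2+\frac{\sigma_s+2M_s}{2}\|u-x_s^{++}\|^2
\]
\[
\leq \ell_{\psi_s}(u;x_s)+\psi_n(u)+\frac{\sigma_s}{2}\|u-\bar x_s\|^2+M_s\|u-x_s\|^2 .
\]

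Next, the linearizations are replaced by the true function values. On the right-hand side, convexity of $\psi_s$ gives $\ell_{\psi_s}(u;x_s)\leq\psi_s(u)$. On the left-hand side, the line-search inequality \Cref{eq3} gives $\ell_{\psi_s}(x_s^{++};x_s)\geq \psi_s(x_s^{++})-\frac{M_s}{2}\|x_s^{++}-x_s\|^2$. Substituting both and rearranging yields the bracketed difference on the left of \Cref{DescentIneq}. It is bounded by $M_s\|u-x_s\|^2-\frac{\sigma_s+2M_s}{2}\|u-x_s^{++}\|^2$ minus the leftover term $\left(M_s-\frac{M_s}{2}\right)\|x_s^{++}-x_s\|^2=\frac{M_s}{2}\|x_s^{++}-x_s\|^2$, which is exactly \Cref{DescentIneq}.

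The only step needing care is the first, namely the strong-convexity growth bound for $h$ with the correct modulus $\sigma_s+2M_s$, which matches the factor $M_s$ (not $M_s/2$) on $\|u-x_s\|^2$ in the definition of $x^{++}(2M_s,\cdot)$. It follows from $0\in\partial h(x_s^{++})$ together with strong convexity of $h$. Everything after that is bookkeeping of the quadratic terms.
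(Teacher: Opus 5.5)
Your proof is correct and follows essentially the same route as the paper, which combines the line-search bound \Cref{eq3} with inequality (3.8) of \cite{AL27Lan}. That cited inequality amounts to the $(\sigma_s+2M_s)$-strong-convexity growth bound for the subproblem defining $x_s^{++}$, with the linearizations then replaced via convexity of $\psi_s$ and \Cref{eq3}, and you carry out both steps explicitly with correct bookkeeping, including the leftover coefficient $M_s-\frac{M_s}{2}=\frac{M_s}{2}$.
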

\begin{proof}
    It follows from \Cref{eq1} that
    \[\psi_s(x_s^{++})-\psi_s(x_s)-\inner{\nabla \psi_s(x_s)}{x_s^{++}-x_s}\leq\frac{M_s}{2}\|x_s^{++}-x_s\|^2\]
where $x_s^{++}:=x^{++}(2M_s,\sigma_s,\bar x_s;x_s)$. The result is then immediate from 
(3.8) in \cite{AL27Lan} with $(x^{++},\eta,\bar x,x,M)$ in the statement replaced with $(x_s^{++},2M_s,\sigma_s,\bar x_s,x_s,M_s)$.
\end{proof}

The following proposition gives a key bound on the norm of the gradient mapping in terms of the distances between $x_s^{*}$ and $x_s$ and $x_s^{*}$ and $\bar x_s$.

\begin{proposition}
    For any $s\geq 1$, we have that the following relation holds:
    \begin{equation}\label{GradientMapBound}
    \|G_{2M_s}(x_s)\|\leq (3\sigma_s+4M_s)\|x_s^{*}-x_s\|+\sigma_s\|x_s^{*}-\bar x_s\|.
    \end{equation}
\end{proposition}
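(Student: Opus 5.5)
Combining \Cref{GM2} with \Cref{GM1} reduces the claim to bounding the gradient mapping at the larger stepsize $2M_s+\sigma_s$. Write $\eta:=2M_s+\sigma_s$. By \Cref{GM2} it suffices to bound $\|G_{\eta}(x_s)\|=\eta\|x_s-x^{+}(\eta;x_s)\|$. By the triangle inequality,
\[
\|x_s-x^{+}(\eta;x_s)\|\le \|x_s-x_s^{*}\|+\|x_s^{*}-x_s^{++}\|+\|x_s^{++}-x^{+}(\eta;x_s)\|,
\]
where $x_s^{++}=x^{++}(2M_s,\sigma_s,\bar x_s;x_s)$ by \Cref{eq1}. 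The last term is controlled by \Cref{GM1}: $\eta\|x_s^{++}-x^{+}(\eta;x_s)\|\le \sigma_s\|x_s-\bar x_s\|\le \sigma_s\|x_s-x_s^{*}\|+\sigma_s\|x_s^{*}-\bar x_s\|$. This produces the $\sigma_s\|x_s^{*}-\bar x_s\|$ term of \Cref{GradientMapBound}, together with a contribution $\sigma_s\|x_s^*-x_s\|$.

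It remains to bound $\|x_s^{*}-x_s^{++}\|$ by a multiple of $\|x_s^*-x_s\|$. We apply \Cref{KeyDescent} with $u=x_s^{*}\in\Dn$. The left-hand side of \Cref{DescentIneq} is then $\phi_s(x_s^{++})-\phi_s(x_s^{*})\ge 0$, since $x_s^{*}$ minimizes $\phi_s$ over $\Dn$ and $x_s^{++}\in\Dn$ (it lies in the domain of $\psi_n$). Dropping the nonpositive term $-\frac{M_s}{2}\|x_s^{++}-x_s\|^2$ gives
\[
\frac{\sigma_s+2M_s}{2}\|x_s^{*}-x_s^{++}\|^2\le M_s\|x_s^{*}-x_s\|^2,
\]
and hence $\|x_s^{*}-x_s^{++}\|\le \sqrt{2M_s/(2M_s+\sigma_s)}\,\|x_s^{*}-x_s\|\le\|x_s^*-x_s\|$.

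Multiplying the triangle inequality by $\eta$ and collecting terms gives
\[
\|G_{2M_s}(x_s)\|\le \eta\|x_s-x_s^*\|+\eta\|x_s^*-x_s\|+\sigma_s\|x_s-x_s^*\|+\sigma_s\|x_s^*-\bar x_s\| = (3\sigma_s+4M_s)\|x_s^{*}-x_s\|+\sigma_s\|x_s^{*}-\bar x_s\|,
\]
which is exactly \Cref{GradientMapBound}. The constants match exactly, which is a good sign that this is the intended route.

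The main point requiring care is the nonnegativity of $\phi_s(x_s^{++})-\phi_s(x_s^*)$. It needs $x_s^{++}\in\Dn$, which holds because $\psi_n$ is finite only on $\Dn$ and $x^{++}$ is defined as an argmin involving $\psi_n$. I would also check that \Cref{KeyDescent} is stated for the right $x_s^{++}$, namely that the descent inequality \Cref{eq3} holds at the backtracking output, which is guaranteed by \Cref{linesearch1}. Everything else is a triangle-inequality computation.
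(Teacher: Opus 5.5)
Your proof is correct and is essentially the paper's argument. Both use \Cref{GM2}, the triangle inequality with \Cref{GM1}, and \Cref{KeyDescent} at $u=x_s^*$ together with the optimality of $x_s^*$. The only difference is which term you drop: the paper drops the $\|x_s^*-x_s^{++}\|^2$ term to get $\|x_s^{++}-x_s\|\le\sqrt{2}\|x_s^*-x_s\|$ and then uses $\sqrt{2}\le 2$, whereas you drop the $\|x_s^{++}-x_s\|^2$ term and route the triangle inequality through $x_s^*$, which yields the same constant $3\sigma_s+4M_s$.
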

where $x_s^{*}$ is as in \Cref{eq:subproblem_unconstrained_searchL} and $G_{2M_s}(x_s):=2M_s(x_s-x_s^{+})$.

\begin{proof}
It follows from the definition
of $x_s^{*}$ in \Cref{eq:subproblem_unconstrained_searchL}
and \Cref{KeyDescent} with 
$u=x_s^{*}$ that the following holds:
\begin{align*}
0&\overset{\Cref{eq:subproblem_unconstrained_searchL}}{\leq} \left[\psi_s(x_s^{++})+\psi_n(x_s^{++})+\frac{\sigma_s}{2}\|x_s^{++}-\bar x_s\|^2\right]-\left[\psi_s(x_s^{*})+\psi_n(x_s^{*})+\frac{\sigma_s}{2}\|x_s^{*}-\bar x_s\|^2 \right]\\
&\overset{\Cref{DescentIneq}}{\leq} M_s\|x_s^{*}-x_s\|^2-\frac{\sigma_s+2M_s}{2}\|x_s^{*}-x_s^{++}\|^2-\frac{M_s}{2}\|x_s^{++}-x_s\|^2.
\end{align*}
Thus, rearranging the above inequality,
we have that
$0.5M_s\|x_s^{++}-x_s\|^2\leq M_s\|x_s^{*}-x_s\|^2$ and hence $\|x_s^{++}-x_s\|\leq \sqrt{2}\|x_s^{*}-x_s\|$.
It then follows from this relation, \Cref{GM1},
\Cref{GM2} that
\begin{align*}
    \|G_{2M_s}(x_s)\|&\leq \|G_{\sigma_s+2M_s}(x_s)\|=(\sigma_s+2M_s)\|x_s-x^{+}(2M_s+\sigma_s;x_s)\|\\
&\leq (\sigma_s+2M_s)\|x_s-x_s^{++}\|+(\sigma_s+2M_s)\|x_s^{++}-x^{+}(2M_s+\sigma_s;x_s)\|\\
&\overset{\Cref{GM1}}{\leq} (\sigma_s+2M_s)\|x_s-x_s^{++}\|+\sigma_s\|x_s-\bar x_s\|\\
& \leq (2\sigma_s+4M_s)\|x_s^{*}-x_s\|+\sigma_s\|x_s-x_s^{*}\|+\sigma_s\|x_s^{*}-\bar x_s\|\\
&=(3\sigma_s+4M_s)\|x_s^{*}-x_s\|+\sigma_s\|x_s^{*}-\bar x_s\|
\end{align*}
which immediately implies the result holds.
\end{proof}

The following proposition provides key bounds on the distances of the iterates generated by AR-L. We omit the proof since the proof follows similarly to the proof of Propositions 2.1 and 3.2 in \cite{AL27Lan}.

\begin{proposition}
    For all $s\geq 1$, we have that the following relations must hold:
\begin{align}
    &\|x_{s-1}-x_s^{*}\|\leq \|x_{s-1}-x_{s-1}^{*}\|\label{DistBound}\\
    & \sigma_s\|\bar x_s-x_s^{*}\|\leq \sum_{i=1}^{s}(\sigma_{i-1}+\sigma_i)\|x_{i-1}^{*}-x_{i-1}\|\label{SigmaDistBound}.
\end{align}
\end{proposition}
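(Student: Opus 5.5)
Both inequalities follow the argument of Propositions 2.1 and 3.2 in \cite{AL27Lan}. The key observation is a recursive description of the objective $\phi_s$ in \Cref{eq:subproblem_unconstrained_searchL}. With $\sigma_0=0$ and $\bar x_0=x_0$, I take $x_0^{*}$ to be any minimizer of $\psi$ over $\Dn$. Such a minimizer exists since $\Dn$ is compact and $\psi$ is closed.

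The update in step~\ref{ARL8} reads $\bar x_s=(1-\gamma_s)\bar x_{s-1}+\gamma_s x_{s-1}$ with $\gamma_s=1-\sigma_{s-1}/\sigma_s$. Hence
\[
\sigma_s\bar x_s=\sigma_{s-1}\bar x_{s-1}+(\sigma_s-\sigma_{s-1})x_{s-1}.
\]
Expanding the squares, this gives, up to an additive constant independent of $x$,
\[
\phi_s(x)=\phi_{s-1}(x)+\frac{\sigma_s-\sigma_{s-1}}{2}\|x-x_{s-1}\|^2 .
\]
For $s=1$ this is simply $\phi_1=\psi+\frac{\sigma_1}{2}\|\cdot-x_0\|^2$, with $\phi_0=\psi$.

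\textbf{Proof of \Cref{DistBound}.} I compare $\phi_s$ at $x_{s-1}^{*}$ and at $x_s^{*}$. Since $x_s^{*}$ minimizes $\phi_s$, we have $\phi_s(x_{s-1}^{*})\ge\phi_s(x_s^{*})$. Substituting the decomposition above and rearranging gives
\[
\frac{\sigma_s-\sigma_{s-1}}{2}\Big(\|x_{s-1}^{*}-x_{s-1}\|^2-\|x_s^{*}-x_{s-1}\|^2\Big)\ \ge\ \phi_{s-1}(x_s^{*})-\phi_{s-1}(x_{s-1}^{*}).
\]
The function $\phi_{s-1}$ is $\sigma_{s-1}$-strongly convex and minimized at $x_{s-1}^{*}$, so the right-hand side is at least $\frac{\sigma_{s-1}}{2}\|x_s^{*}-x_{s-1}^{*}\|^2\ge 0$. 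When $s=1$ it is simply nonnegative by optimality of $x_0^{*}$. Because $\sigma_s>\sigma_{s-1}$, dividing through yields \Cref{DistBound}.

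\textbf{Proof of \Cref{SigmaDistBound}.} I argue by induction on $s$. The base case $s=1$ reads $\sigma_1\|x_0-x_1^{*}\|\le\sigma_1\|x_0-x_0^{*}\|$, which is exactly \Cref{DistBound} at $s=1$.

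For the inductive step, the identity for $\sigma_s\bar x_s$ gives
\[
\sigma_s(\bar x_s-x_s^{*})=\sigma_{s-1}(\bar x_{s-1}-x_{s-1}^{*})+\sigma_{s-1}(x_{s-1}^{*}-x_s^{*})+(\sigma_s-\sigma_{s-1})(x_{s-1}-x_s^{*}).
\]
The middle term is the one step that needs care. It would be tempting to use the strong-convexity inequality above, but that produces a factor $\sqrt{(\sigma_s-\sigma_{s-1})/\sigma_{s-1}}=\sqrt3$, which spoils the clean coefficient. I use the triangle inequality together with \Cref{DistBound} instead:
\[
\|x_{s-1}^{*}-x_s^{*}\|\le\|x_{s-1}^{*}-x_{s-1}\|+\|x_{s-1}-x_s^{*}\|\le 2\|x_{s-1}-x_{s-1}^{*}\|.
\]
The last term is bounded by $(\sigma_s-\sigma_{s-1})\|x_{s-1}-x_{s-1}^{*}\|$, again via \Cref{DistBound}. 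Combining these,
\[
\sigma_s\|\bar x_s-x_s^{*}\|\le\sigma_{s-1}\|\bar x_{s-1}-x_{s-1}^{*}\|+(\sigma_{s-1}+\sigma_s)\|x_{s-1}-x_{s-1}^{*}\|,
\]
since $2\sigma_{s-1}+(\sigma_s-\sigma_{s-1})=\sigma_{s-1}+\sigma_s$. Unrolling this recursion down to the base case gives \Cref{SigmaDistBound}.
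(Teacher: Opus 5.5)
Your proof is correct and follows essentially the argument the paper defers to (Propositions 2.1 and 3.2 of \cite{AL27Lan}). You use the identity $\sigma_s\bar x_s=\sigma_{s-1}\bar x_{s-1}+(\sigma_s-\sigma_{s-1})x_{s-1}$ to write $\phi_s$ as $\phi_{s-1}$ plus the accumulated term $\frac{\sigma_s-\sigma_{s-1}}{2}\|\cdot-x_{s-1}\|^2$, compare $\phi_s$ at $x_{s-1}^*$ and $x_s^*$, and bound the decomposition of $\sigma_s(\bar x_s-x_s^*)$ with the triangle inequality, which is exactly where the coefficient $\sigma_{i-1}+\sigma_i$ comes from. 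Your convention that $x_0^*$ is a minimizer of $\psi$ over $\Dn$ also matches how the paper later uses the bound, where $x_0^*$ appears as $x^*$.
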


The following proposition shows that the iterate $v_s$ computed in step \ref{ARL12} lies in the subdifferential of $\phi$ and also shows a key bound on its norm.

\begin{proposition}
For every $s\geq 1$, the following relations must hold:
\begin{align}
    &v_s\in \nabla \psi_s(x_{s}^{+})+\partial \psi_n(x_s^{+})\label{Vinclusion}\\
    &\|v_s\|\leq (6\sigma_s+8M_s)\|x_s^{*}-x_s\|+2\sigma_s\|x_s^{*}-\bar x_s\|\label{Vbound}.
\end{align}
where recall $v_{s}$ is computed as in step \ref{ARL12} of \Cref{alg:ARL}.
\end{proposition}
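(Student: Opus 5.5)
We prove the inclusion \eqref{Vinclusion} from the optimality condition of the prox step defining $\xp{s}$, and the bound \eqref{Vbound} by combining \eqref{eq4} with \eqref{GradientMapBound}.

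\emph{Inclusion.} By \eqref{eq2}, $\xp{s}$ minimizes the strongly convex function $u\mapsto\inner{\nabla\psi_s(x_s)}{u}+\psi_n(u)+M_s\|u-x_s\|^2$. Its first-order optimality condition reads
\[
0\in \nabla\psi_s(x_s)+\partial\psi_n(\xp{s})+2M_s(\xp{s}-x_s),
\]
that is, $2M_s(x_s-\xp{s})-\nabla\psi_s(x_s)\in\partial\psi_n(\xp{s})$. Adding $\nabla\psi_s(\xp{s})$ to both sides and recalling the definition of $v_s$ in step~\ref{ARL12} of \Cref{alg:ARL} gives $v_s\in\nabla\psi_s(\xp{s})+\partial\psi_n(\xp{s})$, which is \eqref{Vinclusion}.

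\emph{Norm bound.} By the triangle inequality,
\[
\|v_s\|\le 2M_s\|x_s-\xp{s}\|+\|\nabla\psi_s(\xp{s})-\nabla\psi_s(x_s)\|.
\]
The line-search check \eqref{linesearch2} at the accepted $M_s$ gives $\|\nabla\psi_s(\xp{s})-\nabla\psi_s(x_s)\|\le M_s\|\xp{s}-x_s\|$; the weaker \eqref{eq4}, with constant $2M_s$, also suffices. Either way the second term is at most $2M_s\|x_s-\xp{s}\|=\|G_{2M_s}(x_s)\|$, so $\|v_s\|\le 2\|G_{2M_s}(x_s)\|$.

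Applying \eqref{GradientMapBound} then yields
\[
\|v_s\|\le 2\left[(3\sigma_s+4M_s)\|x_s^*-x_s\|+\sigma_s\|x_s^*-\bar x_s\|\right]=(6\sigma_s+8M_s)\|x_s^*-x_s\|+2\sigma_s\|x_s^*-\bar x_s\|,
\]
which is exactly \eqref{Vbound}.

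\emph{Main point to check.} The only delicate step is that the second gradient-difference term must be controlled by the gradient mapping at the same scale $2M_s$ used in \eqref{GradientMapBound}. This holds because the backtracking test \eqref{linesearch2} is evaluated at $x^+$, the prox point with parameter $2M_s$, which is the same point $\xp{s}$ appearing in $G_{2M_s}(x_s)=2M_s(x_s-\xp{s})$. We should also confirm that $\xp{s}\in\Dn$, which holds since $\psi_n$ is finite only on $\Dn$. Given these, the argument is routine.
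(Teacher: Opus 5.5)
Your proposal is correct and matches the paper's proof essentially step for step: the inclusion comes from the first-order optimality condition of the prox subproblem \eqref{eq2} defining $x_s^{+}$, and the bound comes from the triangle inequality, then \eqref{eq4} to obtain $\|v_s\|\le 2\|G_{2M_s}(x_s)\|$, then \eqref{GradientMapBound}. Your side remark is also valid: the accepted line-search test \eqref{linesearch2} gives the sharper constant $M_s$, though, as you note, this is not needed for the stated bound.
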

\begin{proof}
It follows from the definition of $x_s^{+}$ in \Cref{eq2}
and the optimality of the subproblem in the definition that $0\in \nabla \psi_s(x_s)+\partial \psi_n(x_s^{+})+2M_s(x_s^{+}-x_s)$. Hence, it follows that
$2M_s(x_s-x_s^{+})+\nabla \psi_s(x_s^{+})-\nabla \psi_s(x_s)\in \nabla \psi_s(x_s^{+})+\partial \psi_n(x_s^{+})$ which immediately implies the inclusion in \Cref{Vinclusion}
in view of the definition of $v_s$ in step \ref{ARL12} of \Cref{alg:ARL}.

Now observe that it follows from the definition of $x_s^{+}$ in \Cref{eq2} and the definition of $G_{\eta}(x)$ in \Cref{Gradient Mapping} that $G_{2M_s}(x_s)=2M_s(x_s-x_s^{+})$. It follows from this observation, the definition of $v_s$ in step \ref{ARL12} of \Cref{alg:ARL}, \Cref{eq4}, and \Cref{GradientMapBound} that
\begin{align*}
    \|v_s\|&=\|2M_s(x_s-x_s^{+})+\nabla \psi_s(x_s^{+})-\nabla \psi_s(x_s)\|\le \|2M_s(x_s-x_s^{+})\|+\|\nabla \psi_s(x_s^{+})-\nabla \psi_s(x_s)\|\\
    & \overset{\Cref{eq4}}{\leq}2M_s\|(x_s-x_s^{+})\|+2M_s\|x_s^{+}-x_s\|=2\|G_{2M_s}(x_s)\|\overset{\Cref{GradientMapBound}}{\leq} (6\sigma_s+8M_s)\|x_s^{*}-x_s\|+2\sigma_s\|x_s^{*}-\bar x_s\|
\end{align*}
which shows \Cref{Vbound}.
\end{proof}

We are now ready to prove the main complexity theorem of AR-L.

\begin{proof}[Proof of \Cref{AR-LComplex}]
Observe that it follows  from 
\Cref{SigmaDistBound} and \Cref{Vbound}
that for every $s\geq 1$, it holds that
\begin{align}
    \|v_s\|&\leq (6\sigma_s+8M_s)\|x_s^{*}-x_s\|+2\sigma_s\|x_s^{*}-\bar x_s\| \nonumber \\
    &\overset{\Cref{SigmaDistBound}}{\leq}
    (6\sigma_s+8M_s)\|x_s^{*}-x_s\|+2\sum_{i=1}^{s}(\sigma_{i-1}+\sigma_i)\|x_{i-1}^{*}-x_{i-1}\|\label{VsS1Bound}\\
    &=(6\sigma_s+8M_s)\|x_s^{*}-x_s\|+2\sigma_1\|x_0-x^{*}\|+2\sum_{i=2}^{s}(\sigma_{i-1}+\sigma_i)\|x_{i-1}^{*}-x_{i-1}\|\label{BoundVs}.
\end{align}
Now observe that 
since $\phi_s$ in \Cref{eq:subproblem_unconstrained_searchL} is $\sigma_s$-strongly
convex, it follows that
\begin{equation}\label{StrongConv}
    \|x-x_s^{*}\|\leq \sqrt{\frac{2}{\sigma_s}}\sqrt{\phi_s(x)-\phi_s(x_s^{*})} \quad \forall x\in \Dn.
\end{equation}
 It then follows from \Cref{StrongConv}, \Cref{assumptionA}, and the bound $L_s^{k}\leq c_{\mathcal A}\bar L$ that, after the $k$-th evaluation of $\nabla \psi_s$, subroutine $\mathcal A$ computes an approximate solution $x_s^{k}$ such that 
\begin{align}
    \|x_s^{k}-x_s^{*}\|&\overset{\Cref{StrongConv}}{\leq} \sqrt{\frac{2}{\sigma_s}}\sqrt{\phi_s(x_s^{k})-\phi_s(x_s^{*})} \nonumber \\
    &\overset{\Cref{assumptionA}}{\leq} \sqrt{\frac{2}{\sigma_s}}\sqrt{\frac{L_s^{k}\|x_{s-1}-x_s^{*}\|^2}{k^2}} \leq \frac{1}{k}\sqrt{\frac{2c_{\mathcal A}\bar L}{\sigma_s}}\|x_{s-1}-x_s^{*}\|\label{BoundXs}.
\end{align}
Moreover, it follows from the assumption on $\mathcal A$ in \Cref{ATermin} and the bound $L_s^{k}\leq c_{\mathcal A}\bar L$
that the number of gradient evaluations that $\mathcal A$ performs is bounded by
\begin{equation}\label{BoundNs}
    N_s:=\left\lceil 8 \sqrt{\frac{2L_s^k}{\sigma_s}}\right \rceil\leq \left \lceil 8\sqrt{\frac{2c_{\mathcal A}\bar L}{\sigma_s}} \right\rceil.
\end{equation}
It then follows from \Cref{DistBound}, \Cref{BoundXs}, and \Cref{BoundNs}
that
\begin{align}
    \|x_s-x_s^{*}\|&\overset{\Cref{BoundXs}}{\leq} \frac{1}{N_s}\sqrt{\frac{2 c_{\mathcal A}\bar L}{\sigma_s}}\|x_{s-1}-x_{s}^{*}\|\nonumber\\
   & \overset{\Cref{BoundNs}}{\leq}\frac{1}{8}\|x_{s-1}-x_{s}^{*}\|\overset{\Cref{DistBound}}{\leq} \frac{1}{8}\|x_{s-1}-x_{s-1}^{*}\|\label{ContractionBound}.
\end{align}
It then follows from the bound on $v_s$ in
\Cref{BoundVs}, \Cref{ContractionBound}, the fact that $8^{-s}\leq (0.5)4^{-s}$ for $s\geq 2$, and the fact that $\sigma_{s}=4\sigma_{s-1}$ that the following relations hold for every $s\geq 2$:
\begin{align}
\|v_s\|&\overset{\Cref{BoundVs}}{\leq} (6\sigma_s+8M_s)\|x_s^{*}-x_s\|+2\sigma_1\|x_0-x^{*}\|+2\sum_{i=2}^{s}(\sigma_{i-1}+\sigma_i)\|x_{i-1}^{*}-x_{i-1}\|\nonumber\\
&\overset{\Cref{ContractionBound}}{\leq} (6\sigma_s+8M_s)8^{-s}\|x^{*}-x_0\|+2\sigma_{1}\|x_0-x^{*}\|+10\sigma_{1}\sum_{i=2}^{s} 4^{i-2}8^{-i+1}\|x^{*}-x_0\|\nonumber\\
&\leq (3\sigma_s+4M_s)4^{-s}\|x^{*}-x_0\|+2\sigma_{1}\|x_0-x^{*}\|+\frac{5}{2}\sigma_{1}\|x_0-x^{*}\|\nonumber\\
&=(3\sigma_s+4M_s)4^{-s}\|x^{*}-x_0\|+\frac{9}{2}\sigma_{1}\|x_0-x^{*}\|\label{VRefinedBound}.
\end{align}
We now estimate $\|\hat v\|$ when the termination condition
$\text{stopcrit}\leq 0$ is satisfied, where
$\text{stopcrit}=M_s-\sigma_s$.
If the criterion is satisfied with $s=1$, then it follows from step~\ref{ARLOut} of AR-L that it terminates with $(\hat x,\hat v,M)=(\xp{1},v_1,M_1)$.
Moreover, it follows from the fact that $\sigma_0=0$, $\sigma_1\geq M_1$, 
\Cref{VsS1Bound} and \Cref{ContractionBound} with $s=1$, and  the fact that $8^{-1}\leq (0.5)4^{-1}$
that
\begin{align}
\|\hat v\|=\|v_1\|&\overset{}{\leq} 
    (6\sigma_1+8M_1)\|x_1^{*}-x_1\|+2\sigma_1\|x_0-x^{*}\|\nonumber\\
    &\overset{\eqref{ContractionBound}}{\leq} (3\sigma_1+4\sigma_1)*\frac{1}{4}\|x_0-x^{*}\|+2\sigma_1\|x_0-x^{*}\|\nonumber\\
    &\leq 7\sigma_1\|x_0-x^{*}\|\leq 7\sigma_1D\label{SubdiffBound}.
\end{align}
Otherwise, let $S>1$ be the smallest index $s$
for which the termination criterion 
$M_s-\sigma_s\leq 0$ is satisfied.
It follows from
\Cref{VRefinedBound}, the fact that the diameter of $\Dn$ is $D$, the identity $\sigma_s=4^{s-1}\sigma_1$, and the termination condition $\sigma_{S}\geq M_S$ that 
the output
$\hat v=v_{S}$ satisfies
\begin{align}
    \|\hat v\|=\|v_{S}
    &\|\leq (3\sigma_{S}+4M_S)4^{-S}\|x^{*}-x_0\|+\frac{9}{2}\sigma_1\|x_0-x^{*}\| \nonumber\\
    & \leq (7\sigma_{S})4^{-S}\|x^{*}-x_0\|+\frac{9}{2}\sigma_1\|x_0-x^{*}\| \nonumber \\
    &=\frac{7}{4}\sigma_1\|x^{*}-x_0\|+\frac{9}{2}\sigma_1\|x^{*}-x_0\|\leq 7\sigma_1\|x^{*}-x_0\|\leq 7\sigma_1D\label{subdiffBoundFinal}.
\end{align}
Hence, it follows from \Cref{SubdiffBound,subdiffBoundFinal} that the output $\hat v$
always satisfies the inequality in \Cref{Output1}. Moreover, since the inclusion
\Cref{Vinclusion} holds for every $s\geq 1$, it follows from the way the output
$(\hat x,\hat v,\hat M)$ is defined in step~\ref{ARLOut} that the pair $(\hat x,\hat v)$
satisfies the inclusion in \Cref{Output1}. Finally, because \Cref{eq5} holds
at every iteration $s\geq 1$, and $\hat M=M_s$ for some iteration index $s$,
we conclude that $\hat M$ satisfies \Cref{Output3}.

To establish the complexity bound in \Cref{CompResultARL}, assume that
$M_0\leq 2\bar L$ and let $S$ be the smallest index for which termination criterion $\sigma_s\geq M_s$ is satisfied. First, observe that \Cref{eq5} implies that, for every
$s=1,\ldots,S$, the backtracking procedure outputs $M_s$ satisfying $M_s\leq \max\left\{M_0,2\bar L \right\}$. We next derive an upper bound on $S$. Using the bound above, the update
$\sigma_s=4\sigma_{s-1}$ for $s>1$, the assumption $M_0\leq 2\bar L$, and the
fact that $S>1$ is the first index such that $\sigma_S\geq M_S$, we obtain
\[M_S\leq 4^{S-1}\sigma_1=\sigma_{S}=4\sigma_{S-1}<4M_{S-1}\overset{}{\leq}\max \left\{4M_0, 8\bar L \right\}\leq 8 \bar L.\]
It is then immediate from the above relations that
$S\leq 2+\log^{+}_{4}(2\bar L/\sigma_1)$.

We now bound the total number of gradient evaluations performed during AR-L to find such a $\hat v$.
Recall that during each outer iteration of AR-L, algorithm $\mathcal A$ performs $N_s$ gradient evaluations where $N_s$
is bounded as in \Cref{BoundNs}. Furthermore, as shown above
AR-L performs at most $S$ outer iterations, where $S\leq 2+\log_{4}(2\bar L/\sigma_1)$.
Hence, the total number of gradient evaluations that $\mathcal A$ performs throughout the AR-L method can be bounded as
\begin{align*}
    \sum_{s=1}^{S}N_s&\leq \sum_{s=1}^{S}1+8\sqrt{\frac{2 c_{\mathcal A}\bar L}{\sigma_s}}\\
    &\leq S+8\sqrt{\frac{2c_{\mathcal A}\bar L}{\sigma_1}}\sum_{s=1}^{\infty}2^{-(s-1)}\leq S+16\sqrt{\frac{2c_{\mathcal A}\bar L}{\sigma_1}}.
\end{align*}
Now observe that the call made to the Backtracking function in step~\ref{ARL11} of the $s$-th iteration of AR-L performs at most
$2+\log_2(M_s/M_{s-1})$ gradient evaluations.
Thus, it follows from summing and 
using \Cref{eq5} with $s=S$ that
the total number of extra gradient evaluations from the 
Backtracking function is bounded by 
$2S+\log_2(2\bar L/M_0)$.
Thus, summing everything together and using the fact that $3\log_{4} t<2\sqrt{t}$ for all $t>0$, the total number of gradient evaluations that the AR-L method performs is bounded by
\begin{align*}
    &3S+16\sqrt{\frac{2c_{\mathcal A}\bar L}{\sigma_1}}+\log_{2}\left(\frac{2\bar L}{M_0}\right)\leq 3\left[2+\log^{+}_4\left(\frac{2\bar L}{\sigma_1} \right)\right]+16\sqrt{\frac{2c_{\mathcal A}\bar L}{\sigma_1}}+\log_{2}\left(\frac{2\bar L}{M_0}\right)\nonumber\\
&=6+3\log^{+}_4\left(\frac{2\bar L}{\sigma_1} \right)
+16\sqrt{2c_{\mathcal A}}\sqrt{\frac{\bar L}{\sigma_1}}+\log_{2}\left(\frac{2\bar L}{M_0}\right)\\
&<6+2\sqrt{\frac{2\bar L}{\sigma_1}}
+16\sqrt{2c_{\mathcal A}}\sqrt{\frac{\bar L}{\sigma_1}}+\log_{2}\left(\frac{2\bar L}{M_0}\right)
= 6
+C_{1}\sqrt{\frac{\bar L}{\sigma_1}}+\log_{2}\left(\frac{2\bar L}{M_0}\right)
\end{align*}
where $C_1:=2\sqrt{2}+16\sqrt{2c_{\mathcal A}}$.
This immediately proves the gradient evaluation
complexity bound \Cref{CompResultARL} of the AR-L method.

The last conclusion of \Cref{AR-LComplex} follows immediately from
the first conclusion of \Cref{AR-LComplex} and the assumption that $\sigma_1=\epsilon/(7\overline D)$ for some scalar $\overline D>0$.
\end{proof}

\section{Proof of \texorpdfstring{\Cref{prop:nest_complex1}}{Lg}}\label{SC-FISTA Appendix Proof}
\phantomsection 
This appendix section is dedicated to proving \Cref{prop:nest_complex1}. It is divided into two subsections. The first subsection is dedicated to proving
\Cref{prop:nest_complex1}(a)-(b) while the second subsection is dedicated to proving \Cref{prop:nest_complex1}(c).
\phantomsection

\subsection{Proof of \texorpdfstring{\Cref{prop:nest_complex1}(a)-(b)}{Lg}}

The following lemma presents key properties of the iterates generated
during the $\ell$-th cycle of R-FISTA. Its proof is not given as it closely resembles the proofs of Lemmas A.3 and A.4 in \cite{AL38Sujanani}.

\begin{lemma}\label{lem:gamma-sfista0}
Let $\theta$, $\zeta_{\ell}$, and $Q_{\ell}$ be as in \cref{D0 def}. For every inner
iteration index $j \geq 1$ generated during the $\ell$-th cycle of R-FISTA, the following statements hold:
\begin{itemize}
\item[(a)] $\{L_j\}$ is nondecreasing;
\item[(b)] the following relations hold
\begin{align}
&\tau_{j-1} = 1+\frac{\tilde\mu A_{j-1}}{2}, \quad \frac{\tau_{j-1}  A_{j}}{ a_{j-1}^2}=L_{j},\label{tauproperty}\\[0.5em]
&\underbar M_{\ell}\leq L_{j-1}\leq \max\{\underbar M_{\ell},\theta \bar L\}\label{upper bound},\\[0.8em]
& v_{j} \in \nabla \psi_s(y_{j}) + \partial \psi_n(y_{j}), \quad \|v_{j}\|\leq \zeta_{\ell} \|y_{j}-\tilde x_{j-1}\|;\label{ubound-1}
\end{align}
\item[(c)] it holds that 
\begin{equation}\label{sumAkbound}
    A_jL_j \geq \max \left\{\frac{j^2}{4} ,  \left(1+Q_{\ell}^{-1}\right)^{2(j-1)}
 \right\}.
\end{equation}
\end{itemize}
\end{lemma}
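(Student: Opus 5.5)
The plan is to fix a cycle $\ell$ and prove the three parts in order, using only the update formulas \cref{def:ak-sfista1}, \cref{eq:ynext-sfista1}, \cref{eq:taunext-sfista1}--\cref{def:uk}, the initialization $(A_0,\tau_0,L_0)=(0,1,\underbar M_\ell)$, and the $\bar L$-smoothness of $\psi_s$. Throughout the cycle, $\tilde\mu=\mu_{\ell-1}$ is fixed.

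\textbf{Part (a) and the identities \cref{tauproperty}.} Part (a) is immediate: step~\ref{InnerWhileFISTA} sets $L_j=L_{j-1}$ and then only doubles it. For the first identity in \cref{tauproperty} I will induct on $j$. It holds at $j=1$ since $\tau_0=1$ and $A_0=0$. The updates $\tau_j=\tau_{j-1}+a_{j-1}\tilde\mu/2$ and $A_j=A_{j-1}+a_{j-1}$ then preserve $\tau_j=1+\tilde\mu A_j/2$. For the second identity, note that $a_{j-1}$ in \cref{def:ak-sfista1} is the positive root of $L_j a^2-\tau_{j-1}a-\tau_{j-1}A_{j-1}=0$. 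Hence $L_ja_{j-1}^2=\tau_{j-1}(A_{j-1}+a_{j-1})=\tau_{j-1}A_j$.

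\textbf{Bounds \cref{upper bound} and \cref{ubound-1}.} The lower bound in \cref{upper bound} follows from $L_0=\underbar M_\ell$ and monotonicity. For the upper bound, I use the descent lemma $\psi_s(y)\le \ell_{\psi_s}(y;\tilde x)+\frac{\bar L}{2}\|y-\tilde x\|^2$. It shows the backtracking test in step~\ref{InnerWhileFISTA}(i) is passed once $(1-\chi)L_j/4\ge \bar L/2$, i.e.\ once $L_j\ge \theta\bar L/2$. Doubling from below this threshold therefore never exceeds $\theta\bar L$, which gives $L_j\le\max\{\underbar M_\ell,\theta\bar L\}$ by induction.

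For \cref{ubound-1}, the optimality condition of the prox step \cref{eq:ynext-sfista1} gives $s_j-\nabla\psi_s(\tilde x_{j-1})\in\partial\psi_n(y_j)$, with $s_j=L_j(\tilde x_{j-1}-y_j)$. Adding $\nabla\psi_s(y_j)$ yields the inclusion for $v_j$. The triangle inequality and $\bar L$-smoothness then give $\|v_j\|\le(\bar L+L_j)\|y_j-\tilde x_{j-1}\|\le\zeta_\ell\|y_j-\tilde x_{j-1}\|$.

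\textbf{Part (c).} This is the main computational step, and I would derive the two lower bounds separately.

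\emph{Quadratic bound.} Since $\tau_{j-1}\ge1$, the identity $L_ja_{j-1}^2=\tau_{j-1}A_j$ gives $a_{j-1}\ge\sqrt{A_j/L_j}$. Because $A_j-A_{j-1}=a_{j-1}$, this implies $\sqrt{A_j}-\sqrt{A_{j-1}}\ge 1/(2\sqrt{L_j})$. Summing, and using that $L_i\le L_j$ for $i\le j$, gives $\sqrt{A_j}\ge j/(2\sqrt{L_j})$, hence $A_jL_j\ge j^2/4$.

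\emph{Geometric bound.} Use instead $\tau_{j-1}\ge\tilde\mu A_{j-1}/2$, which gives $L_j(A_j-A_{j-1})^2\ge(\tilde\mu/2)A_{j-1}A_j$. Writing $r=A_j/A_{j-1}$ and $q=\sqrt{\tilde\mu/(2L_j)}$, this becomes $r-1\ge q\sqrt r$. Hence $\sqrt r\ge(q+\sqrt{q^2+4})/2\ge 1+q/2$.

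By \cref{upper bound} and the definition of $Q_\ell$ in \cref{D0 def}, we have $q/2\ge\frac12\sqrt{\mu_{\ell-1}/(2\max\{\underbar M_\ell,\theta\bar L\})}=1/Q_\ell$. Therefore $A_j\ge(1+Q_\ell^{-1})^2A_{j-1}$.

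To anchor the recursion, note that $a_0=1/L_1$ because $\tau_0=1$ and $A_0=0$. So $A_1L_1=1$. Monotonicity of $L_j$ then gives $A_jL_j\ge A_jL_1\ge(1+Q_\ell^{-1})^{2(j-1)}$, which completes \cref{sumAkbound}.

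The delicate points are keeping the monotonicity of $L_j$ in the right direction in both summations, and matching the constant $2\sqrt2$ in $Q_\ell$, which the computation $q/2\ge 1/Q_\ell$ above is designed to do.
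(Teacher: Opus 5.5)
Your proof is correct. The root identity $L_ja_{j-1}^2=\tau_{j-1}A_j$, the descent-lemma cap showing backtracking never pushes $L_j$ past $\max\{L_{j-1},\theta\bar L\}$, the prox optimality condition for $v_j$, and both recursions for $A_j$ all check out; in particular, keeping $\tau_{j-1}A_j\ge\frac{\tilde\mu}{2}A_{j-1}A_j$ rather than weakening it to $A_{j-1}^2$ correctly yields the factor $(1+Q_\ell^{-1})^2$. The paper omits this proof and defers to Lemmas A.3--A.4 of \cite{AL38Sujanani}, and your self-contained derivation is the standard argument behind those lemmas.
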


The proof of \Cref{prop:nest_complex1}(a)-(b) is now presented. The
proof of \Cref{prop:nest_complex1}(a)-(b) has a similar flavor to the proof of Proposition 2.1(a)-(b) in \cite{AL39Sujanani}, but we include it here for completeness.
\begin{proof}[Proof of \Cref{prop:nest_complex1}(a)-(b)]
(a) Consider the $\ell$-th cycle of R-FISTA and let $m$ denote the first term in the sum in \cref{eq:eq1}. Using 
this definition and the
inequality $\log (1+ \alpha) \ge \alpha/(1+\alpha)$ for
any $\alpha>-1$, it is easy to verify that
\begin{equation}\label{Q bound-2}
\left(1+Q_{\ell}^{-1}\right)^{2(m-1)} \ge 
\frac{D^2\zeta_{\ell}^2}{\chi \epsilon^2}.
\end{equation}
We claim that the $\ell$-th cycle of R-FISTA  either terminates successfully with stopcrit $\leq \epsilon$ or stops in its step~\ref{RestartCheckStep} because the inequality \Cref{restart condition} did not hold (in one of its inner iterations) in at most $m$ inner ACG iterations.
Indeed, it suffices to show that
if the $\ell$-th cycle of R-FISTA has not stopped in step~\ref{RestartCheckStep}
up to (and including) the
$m$-th inner ACG iteration, then
the main while loop of R-FISTA must successfully stop with stopcrit $\leq \epsilon$
at the $m$-th iteration.
So, assume that the $\ell$-th cycle of
R-FISTA has not stopped in its step~\ref{RestartCheckStep} up to (and including) the
$m$-th inner ACG iteration. 
Hence, it follows that
\cref{restart condition} holds with $j=m$.

This observation together with
the inequality \cref{ubound-1} with $j=m$, relation \cref{Q bound-2}, and inequality \cref{sumAkbound} with $j=m$,
then imply that
\begin{align}\label{bound diff-1-2}
D^2&\geq\|y_{m}-x_0\|^{2} \overset{\cref{restart condition}}{\geq} \chi A_{m}L_{m} \|y_{m}-\tilde x_{m-1}\|^2
\overset{\cref{ubound-1}}{\ge}
\frac{\chi}{\zeta_{\ell}^2}A_mL_m \|v_{m}\|^2\\
&\overset{\cref{sumAkbound}}{\geq}\frac{\chi}{\zeta_{\ell}^2}
\left(1+Q_{\ell}^{-1} \right)^{2(m-1)}
\|v_m\|^2 \overset{\cref{Q bound-2}}{\geq} \frac{D^2}{\epsilon^2}\|v_m\|^2
\end{align}
and hence that $\|v_m\|\leq \epsilon$.
In view of the criterion of the main while loop of
R-FISTA, the $\ell$-th cycle of R-FISTA
must thus
successfully stop at the
end of its $m$-th inner ACG iteration.
Hence,
the above claim holds.
Moreover, in view of \cref{upper bound},
it follows that
the second term in 
\cref{eq:eq1} is a bound
on the total number of times $L_j$ is multiplied by $2$ and step~\ref{InnerWhileFISTA} is repeated. Since exactly one gradient evaluation occurs every time step~\ref{InnerWhileFISTA} is executed, the result of part (a) must hold.

(b) 
Suppose that the main while loop of the $\ell$-th cycle of R-FISTA terminates successfully with stopcrit $\leq \epsilon$.
Since
stopcrit=$\|v_j\|$
whenever no restart occurs 
and the inclusion in 
\cref{ubound-1}
holds for every generated iterate,
the output pair 
$(y,v)$
satisfies
\Cref{OutputRFISTA}.
\end{proof}

\subsection{Proof of \texorpdfstring{\Cref{prop:nest_complex1}(c)}{Lg}}
For the remainder of this subsection, consider the $\ell$-th cycle of
R-FISTA. We assume that $j\geq 1$ is an inner ACG iteration index generated during
the cycle and that $\tilde \mu=\mu_{\ell-1}$ is in the interval $(0,\bar \mu]$.

A useful fact that is used in the proof of following result is that if a function $\Psi:\E \to\R\cup\{+\infty\}$ is $\nu$-convex with modulus $\nu>0$, then it has a unique global minimum $x^*$ and
\begin{equation}\label{ineq:nu-convex}
\Psi(x^*) +\frac{\nu}{2}\|\cdot - x^*\|^2\leq \Psi(\cdot). 
\end{equation}

The first lemma below characterizes a key property of the iterate $y_j$.
\begin{lemma}\label{Key Relation}
For every iteration index $j \geq 1$ generated during the $\ell$-th cycle of R-FISTA, it holds that
\begin{equation}
		y_{j}:=\underset{u\in \Dn}\argmin\left\lbrace q_{j-1} (u;\tx_{j-1},L_{j}) 
		:= \ell_{\psi_s}(u;\tilde x_{j-1})+\psi_n(u) + \frac{L_{j}}{2}\|u-\tx_{j-1}\|^2\right\rbrace.
		\label{eq:ynext-sfistaRelation}
		\end{equation}
\end{lemma}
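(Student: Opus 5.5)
Since $\psi_n$ is closed, proper and convex with $\dom\psi_n=\Dn$, the plan is to identify the proximal step \eqref{eq:ynext-sfista1} with the minimization of $q_{j-1}(\cdot;\tx_{j-1},L_j)$ by expanding the square and dropping constants.

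First, by definition of the proximal map,
\[
y_j=\argmin_{u}\left\{\psi_n(u)+\frac{L_j}{2}\Big\|u-\tx_{j-1}+\frac{1}{L_j}\nabla\psi_s(\tx_{j-1})\Big\|^2\right\}.
\]
Expanding the quadratic gives
\[
\frac{L_j}{2}\|u-\tx_{j-1}\|^2+\inner{\nabla\psi_s(\tx_{j-1})}{u-\tx_{j-1}}+\frac{1}{2L_j}\|\nabla\psi_s(\tx_{j-1})\|^2 .
\]
Adding the constant $\psi_s(\tx_{j-1})$ and discarding the $u$-independent term $\frac{1}{2L_j}\|\nabla\psi_s(\tx_{j-1})\|^2$ does not change the minimizer. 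By \eqref{eq:defell}, the resulting objective is exactly $\ell_{\psi_s}(u;\tx_{j-1})+\psi_n(u)+\frac{L_j}{2}\|u-\tx_{j-1}\|^2=q_{j-1}(u;\tx_{j-1},L_j)$.

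Next, I would justify restricting the argmin to $u\in\Dn$. Outside $\Dn=\dom\psi_n$ we have $\psi_n=+\infty$, so the minimization over $\E$ and over $\Dn$ coincide. The objective $q_{j-1}$ is $L_j$-strongly convex and closed, so the minimizer exists and is unique, which makes the definition in \eqref{eq:ynext-sfistaRelation} well posed.

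One bookkeeping point needs attention. Step \ref{InnerWhileFISTA}(i) may double $L_j$ and recompute $\tx_{j-1}$ and $y_j$. The relation should be read with the final accepted $L_j$ and the $\tx_{j-1}$ recomputed from it. Since \eqref{eq:ynext-sfista1} is re-evaluated with those values each time, the identity holds for the final triple. There is no real obstacle here; the only care needed is this consistency of $L_j$ after backtracking.
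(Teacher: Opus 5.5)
Your proposal is correct and follows essentially the same route as the paper: write the proximal step \eqref{eq:ynext-sfista1} as a minimization, expand the square, and add or drop $u$-independent constants to recover $q_{j-1}(\cdot;\tx_{j-1},L_j)$ via \eqref{eq:defell}. Your remarks on restricting the minimization to $\Dn$, on existence and uniqueness by strong convexity, and on using the final accepted $L_j$ after backtracking are sound; the paper leaves these points implicit.
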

\begin{proof}
It follows from the update rule for $y_j$ in \cref{eq:ynext-sfista1}, the definition of the proximal operator, the definition of the linearization of $\psi_s$ in \cref{eq:defell}, and algebraic manipulation that
\begin{align*}
    y_j&\overset{\cref{eq:ynext-sfista1}}{=}\argmin_{u\in \Dn}\left\{\frac{1}{L_j} \psi_n(u)+\frac{1}{2}\left\|u-\left(\tilde x_{j-1}-\frac{1}{L_j}\nabla \psi_s(\tilde x_{j-1})\right)\right\|^2\right\}\\
    &\overset{\cref{eq:defell}}{=}\argmin_{u\in \Dn} \left\{ \ell_{\psi_s}(u;\tilde x_{j-1})+\psi_n(u) + \frac{L_{j}}{2}\|u-\tx_{j-1}\|^2\right\}
\end{align*}
which immediately implies that \cref{eq:ynext-sfistaRelation} holds.
\end{proof}

It can easily be seen that \Cref{Key Relation} and the fact that $\tilde \mu=\mu_{\ell-1}$ is in the interval $(0,\bar \mu]$ imply that Lemmas A.2-A.6 in \cite{AL39Sujanani} hold. The next lemma thus restates Lemma A.6 of \cite{AL39Sujanani} which lower bounds the difference in our potential function $\sigma_{j}(x)$.

\begin{lemma} \label{lm:hysub-4-fista}
For every $j \ge 1$ and $x \in \Dn$, we have
\begin{equation}\label{recursion}
\sigma_{j-1}(x) - \sigma_{j} (x) \ge   \frac{\chi A_{j}L_{j}}{2} \|y_{j} - \tx_{j-1} \|^2
\end{equation}
where
\[
\sigma_j(x) := A_j [\psi(y_j) - \psi(x) ] + \frac{\tau_j}{2} \|x-x_j\|^2.
\]
\end{lemma}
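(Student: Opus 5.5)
We prove \cref{recursion} with the standard estimate-sequence argument for strongly convex FISTA, organized around the key property of $y_j$ in \Cref{Key Relation}. The plan has three steps: a one-step descent inequality for the composite model at $y_j$, an aggregation of this inequality with weights $A_{j-1}$ and $a_{j-1}$, and a comparison with the quadratic $\frac{\tau_j}{2}\|x-x_j\|^2$, closed using the identity $\tau_{j-1}A_j=L_ja_{j-1}^2$ from \cref{tauproperty}.

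\textbf{Step 1 (one-step inequality).} Since $q_{j-1}(\cdot;\tx_{j-1},L_j)$ is $L_j$-strongly convex and minimized at $y_j$, \cref{ineq:nu-convex} gives, for every $u\in\Dn$,
\[
q_{j-1}(y_j)+\frac{L_j}{2}\|u-y_j\|^2\le q_{j-1}(u).
\]
The exit condition of the backtracking loop in step~\ref{InnerWhileFISTA}(i) gives $\psi_s(y_j)\le \ell_{\psi_s}(y_j;\tx_{j-1})+\frac{(1-\chi)L_j}{4}\|y_j-\tx_{j-1}\|^2$. We then use $\bar\mu$-strong convexity of $\psi$ together with $\tilde\mu\le\bar\mu$. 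To keep the linearization exact, we write $\psi(u)\ge \ell_{\psi_s}(u;\tx_{j-1})+\psi_n(u)+\frac{\tilde\mu}{2}\|u-\tx_{j-1}\|^2$; this holds because $\psi_s$ is convex and the strong convexity can be transferred through the subgradient $s_j-\nabla\psi_s(\tx_{j-1})+\nabla\psi_s(y_j)$. Combining these three facts yields
\[
\psi(y_j)\le \psi(u)-\frac{\tilde\mu}{2}\|u-y_j\|^2+\frac{L_j}{2}\|u-\tx_{j-1}\|^2-\frac{L_j}{2}\|u-y_j\|^2-\frac{(1+\chi)L_j}{4}\|y_j-\tx_{j-1}\|^2 .
\]
Following Lemma A.2--A.5 of \cite{AL39Sujanani}, it is cleaner to write this in gradient-mapping form: $\psi(u)\ge\psi(y_j)+\langle s_j,u-\tx_{j-1}\rangle+c\,L_j\|y_j-\tx_{j-1}\|^2+\frac{\tilde\mu}{2}\|u-y_j\|^2$ for an explicit constant $c$, which is roughly $\frac{1}{2}+\frac{\chi}{4}$ after subtracting the backtracking slack.

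\textbf{Step 2 (aggregation).} Apply the Step 1 inequality with $u=y_{j-1}$, weighted by $A_{j-1}$, and with $u=x$, weighted by $a_{j-1}$, and add. By the definition of $\tx_{j-1}$ in \cref{def:ak-sfista1}, $A_{j-1}(y_{j-1}-\tx_{j-1})+a_{j-1}(x-\tx_{j-1})=a_{j-1}(x-x_{j-1})$. The linear terms therefore collapse to $a_{j-1}\langle s_j,x-x_{j-1}\rangle$, and we obtain
\[
A_j\psi(y_j)-A_{j-1}\psi(y_{j-1})-a_{j-1}\psi(x)\le -a_{j-1}\langle s_j,x-x_{j-1}\rangle-\frac{a_{j-1}\tilde\mu}{2}\|x-y_j\|^2-cA_jL_j\|y_j-\tx_{j-1}\|^2 .
\]

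\textbf{Step 3 (quadratic potential).} By \cref{eq:xnext-sfista1}, $x_j$ is the minimizer of the $\tau_j$-strongly convex quadratic
\[
\frac{\tau_{j-1}}{2}\|x-x_{j-1}\|^2+a_{j-1}\langle s_j,x\rangle+\frac{a_{j-1}\tilde\mu}{4}\|x-y_j\|^2 ,
\]
whose curvature is $\tau_{j-1}+a_{j-1}\tilde\mu/2=\tau_j$ by \cref{eq:taunext-sfista1}. Evaluating this quadratic at $x$ and at its minimizer shows that $\frac{\tau_{j-1}}{2}\|x-x_{j-1}\|^2-\frac{\tau_j}{2}\|x-x_j\|^2$ is at least $-a_{j-1}\langle s_j,x-x_{j-1}\rangle-\frac{a_{j-1}\tilde\mu}{4}\|x-y_j\|^2$, up to a remainder bounded below by $-\frac{a_{j-1}^2}{2\tau_{j-1}}\|s_j\|^2$. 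Since $\|s_j\|=L_j\|y_j-\tx_{j-1}\|$, the identity $\tau_{j-1}A_j=L_ja_{j-1}^2$ from \cref{tauproperty} converts this remainder into $-\frac{A_jL_j}{2}\|y_j-\tx_{j-1}\|^2$. Adding the result of Step 2 and rearranging via the definition of $\sigma_j(x)$ gives $\sigma_{j-1}(x)-\sigma_j(x)\ge (c-\tfrac12)A_jL_j\|y_j-\tx_{j-1}\|^2$.

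The main obstacle is the constant bookkeeping: we must show that $c-\frac12\ge\frac{\chi}{2}$. The exit condition uses the factor $\frac{(1-\chi)}{4}$, so a careless chain yields only $\chi/4$, or loses the gain entirely through the $\tilde\mu/2$ versus $\tilde\mu/4$ split. If the constants fall short, the first thing we will try is to keep the full $\frac{L_j}{2}\|u-y_j\|^2$ term from Step 1, rather than discarding it, and absorb it into the Step 3 quadratic. The second fallback is to use the sharper three-point identity for $s_j$, namely $\langle s_j,\tx_{j-1}-y_j\rangle=L_j\|y_j-\tx_{j-1}\|^2$, exactly as in Lemma A.6 of \cite{AL39Sujanani}, whose statement we are reproducing.
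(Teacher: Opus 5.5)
Your Steps 2 and 3 are correct: the aggregation identity, the quadratic whose minimizer is $x_j$, and the use of $\tau_{j-1}A_j=L_ja_{j-1}^2$ all check out. They pin down exactly what Step 1 must supply. Writing $\delta_j:=y_j-\tilde x_{j-1}$, you need, for all $u\in\Dn$,
\[
\psi(u)\ge \psi(y_j)+\langle s_j,u-\tilde x_{j-1}\rangle+\frac{1+\chi}{2}L_j\|\delta_j\|^2+\frac{\tilde\mu}{4}\|u-y_j\|^2 .
\]
That is, $c=\frac{1+\chi}{2}$, and the strong-convexity modulus is $\tilde\mu/4$, centered at $y_j$. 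This is why $\tau_j-\tau_{j-1}=a_{j-1}\tilde\mu/2$, and it cancels exactly against the $\frac{a_{j-1}\tilde\mu}{4}\|x-y_j\|^2$ in your Step 3 quadratic.

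Your Step 1 does not provide this, and the problem is not bookkeeping.

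First, $\psi(u)\ge\ell_{\psi_s}(u;\tilde x_{j-1})+\psi_n(u)+\frac{\tilde\mu}{2}\|u-\tilde x_{j-1}\|^2$ is just $\tilde\mu$-strong convexity of $\psi_s$ about $\tilde x_{j-1}$. That is not assumed; only $\psi$ is strongly convex. A counterexample is $\psi_s\equiv 0$ with $\psi_n=\frac{\bar\mu}{2}\|\cdot\|^2$ plus the indicator of a box. "Transferring" through $v_j\in\partial\psi(y_j)$ does not fix this: it gives a bound carrying $\langle\nabla\psi_s(y_j)-\nabla\psi_s(\tilde x_{j-1}),u-y_j\rangle$, which is of order $\bar L\|\delta_j\|\|u-y_j\|$ and cannot be absorbed by $L_j\|\delta_j\|^2$.

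Second, even when $\psi_s$ is strongly convex, moving the strong-convexity term to $\frac{\tilde\mu}{2}\|u-y_j\|^2$ with no loss is false. Take $\psi_s=\frac{\beta}{2}\|\cdot\|^2$, $\psi_n$ the indicator of a large box, $\tilde\mu=\beta$, and $L_j=2\beta/(1-\chi)$, so the backtracking test holds with equality.
\begin{itemize}
\item At $u=\tilde x_{j-1}$, your displayed inequality reduces to $\beta/L_j\le(1-\chi)/4$, which fails.
\item At $u=2\tilde x_{j-1}-y_j$, your gradient-mapping form fails for every $c>\chi$.
\end{itemize}
Your two fallbacks do not help either. The $\frac{L_j}{2}\|u-y_j\|^2$ term and the identity $\langle s_j,\tilde x_{j-1}-y_j\rangle=L_j\|\delta_j\|^2$ are already used without loss when passing to gradient-mapping form.

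The missing idea is to pay for re-centering the strong convexity at $y_j$ out of the backtracking slack. Set
\[
\Psi(u):=\psi(u)-\ell_{\psi_s}(u;\tilde x_{j-1})-\psi_n(y_j)-\langle s_j-\nabla\psi_s(\tilde x_{j-1}),u-y_j\rangle .
\]
This function has four useful properties:
\begin{itemize}
\item $\Psi\ge 0$ on $\Dn$, by convexity of $\psi_s$ and the optimality condition $s_j-\nabla\psi_s(\tilde x_{j-1})\in\partial\psi_n(y_j)$ from \Cref{Key Relation}.
\item $\Psi$ is $\bar\mu$-strongly convex, since it is $\psi$ minus an affine function.
\item $\Psi(y_j)=\psi_s(y_j)-\ell_{\psi_s}(y_j;\tilde x_{j-1})\le\frac{(1-\chi)L_j}{4}\|\delta_j\|^2$, by the backtracking exit condition.
\item $\psi(u)-\psi(y_j)-\langle s_j,u-y_j\rangle=\Psi(u)-\Psi(y_j)$.
\end{itemize}
Let $u^*$ be the minimizer of $\Psi$. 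Then $\|u-y_j\|^2\le 2\|u-u^*\|^2+2\|y_j-u^*\|^2\le\frac{4}{\bar\mu}\bigl(\Psi(u)+\Psi(y_j)\bigr)$. Hence $\Psi(u)-\Psi(y_j)\ge\frac{\tilde\mu}{4}\|u-y_j\|^2-\frac{(1-\chi)L_j}{2}\|\delta_j\|^2$. Converting $\langle s_j,u-y_j\rangle$ to $\langle s_j,u-\tilde x_{j-1}\rangle$ adds $L_j\|\delta_j\|^2$, which yields the target inequality with $c=\frac{1+\chi}{2}$. Your Steps 2--3 then give exactly $\frac{\chi}{2}A_jL_j\|\delta_j\|^2$. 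For comparison, the paper itself gives no argument for this lemma; it defers to Lemma A.6 of \cite{AL39Sujanani}.
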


Next, we state a basic result that is useful in deriving complexity bounds for R-FISTA.
\begin{lemma} \label{lm:hysub-5-ac}
For every $j \ge 2$ and $x \in \Dn$, it holds that
\begin{align}\label{key convergence inequality 2}
 A_{j-1} [\psi(y_{j-1}) - \psi(x) ] + \frac{\tau_{j-1}}2 \|x-x_{j-1}\|^2 \le
 \frac12 \|x-x_0\|^2 - \frac{\chi}{2} \sum_{i=1}^{j-1} A_{i}L_{i} \| y_{i} - \tx_{i-1} \|^2.
\end{align}
\end{lemma}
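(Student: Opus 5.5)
The plan is to telescope the one-step potential decrease in \Cref{lm:hysub-4-fista} from the start of the cycle. That lemma gives, for every $i\ge 1$ and $x\in\Dn$,
\[
\sigma_{i-1}(x)-\sigma_i(x)\ge \frac{\chi A_iL_i}{2}\|y_i-\tx_{i-1}\|^2 .
\]
Summing this over $i=1,\dots,j-1$ (which requires $j\ge 2$, so that at least one term is present) yields
\[
\sigma_{j-1}(x)\le \sigma_0(x)-\frac{\chi}{2}\sum_{i=1}^{j-1}A_iL_i\|y_i-\tx_{i-1}\|^2 .
\]
The left-hand side is, by the definition of $\sigma_j$, exactly the left-hand side of \Cref{key convergence inequality 2}.

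It then remains to evaluate $\sigma_0(x)$. By the initialization of the cycle, $(A_0,\tau_0)=(0,1)$, so the term $A_0[\psi(y_0)-\psi(x)]$ vanishes regardless of the value of $\psi(y_0)$. This is well defined because $y_0=x_0\in\Dn$, so $\psi(y_0)$ is finite. The remaining term is $\frac{\tau_0}{2}\|x-x_0\|^2=\frac12\|x-x_0\|^2$, where $x_0$ is the initial point of the current cycle. Substituting this into the telescoped inequality gives \Cref{key convergence inequality 2}.

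There is no genuine obstacle here. The one point to check is that all indices $i\le j-1$ are generated within the same cycle, so that \Cref{lm:hysub-4-fista} applies with the fixed $\tilde\mu=\mu_{\ell-1}\in(0,\bar\mu]$ and a common $x_0$. This holds because a restart re-executes the initialization and resets both $j$ and $x_0$.
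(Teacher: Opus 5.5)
Your proposal is correct and is essentially the paper's own proof: sum the one-step inequality of \Cref{lm:hysub-4-fista} over $i=1,\dots,j-1$, then use $A_0=0$ and $\tau_0=1$ to get $\sigma_0(x)=\frac12\|x-x_0\|^2$. Your extra remarks, that $\psi(y_0)$ is finite and that all indices lie in a single cycle with a common $x_0$ and $\tilde\mu$, are harmless clarifications of points the paper leaves implicit.
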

\begin{proof}
    The proof follows immediately by summing relation \cref{recursion} from $1$ to $j-1$ and noting that $A_0=0$ and $\tau_0=1$.
\end{proof}

We are now ready to prove \Cref{prop:nest_complex1}(c).
\begin{proof}[Proof of \Cref{prop:nest_complex1}(c)]
Consider the $\ell$-th cycle of R-FISTA and assume that it is performed
with $\tilde \mu=\mu_{\ell-1}$ in the interval $ (0,\bar \mu]$. Using relation \cref{key convergence inequality 2} with $x=y_{j-1}$, it follows that
\[
\|y_{j-1}-x_0\|^2 \overset{\cref{key convergence inequality 2}}{\geq}\chi \sum_{i=1}^{j-1}A_{i}L_{i} \|y_{i}-\tilde x_{i-1}\|^2 \geq \chi A_{j-1}L_{j-1}\|y_{j-1}-\tilde x_{j-2}\|^2.
\]
It then follows from the above relation that for any inner iteration index
$j\geq 1$ generated during the $\ell$-th cycle, it holds that
\begin{equation}\label{bound diff-3}
\|y_{j}-x_0\|^2 \geq \chi A_{j}L_{j}\|y_{j}-\tilde x_{j-1}\|^2.
\end{equation}
Hence, relation \cref{bound diff-3} implies that the inequality
\cref{restart condition} checked in step~\ref{RestartCheckStep} of R-FISTA always holds for
every inner iteration index $j\geq 1$ generated during the $\ell$-th cycle.
Hence, the $\ell$-th cycle of R-FISTA never terminates in its step~\ref{RestartCheckStep} and a restart is not executed.
This observation together with
\Cref{prop:nest_complex1}(a)-(b) then immediately imply that
the main while loop of the $\ell$-th cycle must terminate successfully with stopcrit$\leq \epsilon$ and a pair $(y,v)$ satisfying \Cref{OutputRFISTA} in at most \cref{eq:eq1} ACG iterations/gradient evaluations.
\end{proof}

\section{Proof of \texorpdfstring{\Cref{Total Complexity}}{Lg}}
\phantomsection 
This subsection is dedicated to proving \Cref{Total Complexity}. As mentioned in the statement of \Cref{Total Complexity} we assume that $(\mu_0,\bar M_0)$ satisfy $\mu_0\geq \bar \mu$ and $\bar M_0\leq 2\bar L$.

The proposition below establishes an upper bound on the number of cycles that R-FISTA performs and an upper bound on the number of inner ACG iterations that each cycle of R-FISTA performs. Its proof is similar to that of Proposition 2.2 in \cite{AL39Sujanani}, but we include it here for completeness.

\begin{prop}\label{Cycle Complexity}
The following statements about R-FISTA hold:
\begin{itemize}
    \item[(a)] R-FISTA performs at most $\lceil \log^{++}_2\left(2\mu_0/\bar \mu\right) \rceil$ cycles before terminating with a 
    pair $(y,v)$ that is an $\epsilon$-optimal solution of \cref{MainProblem};
    \item[(b)] each cycle of R-FISTA performs at most 
    \begin{equation}\label{ACGCompFISTA}
    \mathcal O_1\left(\sqrt{\frac{\bar L}{\bar \mu}}\log^{++} \left(\frac{D\bar L}{\epsilon}\right) +\log_2^{+}(\bar L/\bar M_0)\right)
    \end{equation}
ACG iterations/gradient evaluations.
\end{itemize}
\end{prop}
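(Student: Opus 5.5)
Both parts follow from \Cref{prop:nest_complex1} once we control how the strong convexity estimates $\mu_\ell$ and the Lipschitz lower estimates $\underbar M_\ell$ move from one cycle to the next.

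For part (a), we use that a restart halves the estimate: $\mu_\ell=0.5\mu_{\ell-1}$, so the $\ell$-th cycle runs with $\tilde\mu=\mu_{\ell-1}=2^{-(\ell-1)}\mu_0$. By \Cref{prop:nest_complex1}(c), once $\mu_{\ell-1}\le\bar\mu$ no restart can happen in that cycle, and the cycle ends successfully with an $\epsilon$-optimal pair (part (b) of that proposition). The first index with $2^{-(\ell-1)}\mu_0\le\bar\mu$ satisfies $\ell\le 1+\lceil\log_2^+(\mu_0/\bar\mu)\rceil$. Since $\mu_0\ge\bar\mu$, this is at most $\lceil\log_2^{++}(2\mu_0/\bar\mu)\rceil$. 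Termination can only happen through a successful stop or through reaching such a cycle, so at most this many cycles are performed.

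For part (b), we bound the expression \cref{eq:eq1} uniformly in $\ell$. The key preliminary bound is
\[
\bar M_0\le \underbar M_\ell\le \max\{\bar M_0,\theta\bar L\}.
\]
The lower bound is built into the initialization rule. For the upper bound, $\underbar M_\ell\le\bar M_{\ell-1}=L_j$ at the restart iteration, and \cref{upper bound} combined with monotonicity of $L_j$ gives $L_j\le\max\{\underbar M_{\ell-1},\theta\bar L\}$. Induction then yields $\underbar M_\ell\le\max\{\bar M_0,\theta\bar L\}=\mathcal O(\bar L)$, using $\bar M_0\le 2\bar L$. With this bound, $\zeta_\ell=\mathcal O(\bar L)$ and $\log^{++}(D^2\zeta_\ell^2/(\chi\epsilon^2))=\mathcal O(\log^{++}(D\bar L/\epsilon))$, treating $\chi$ as a constant. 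The backtracking term satisfies $\log_2^+(2\bar L/((1-\chi)\underbar M_\ell))\le\mathcal O_1(1+\log_2^+(\bar L/\bar M_0))$.

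The main obstacle is the factor $Q_\ell=2\sqrt2\sqrt{\max\{\underbar M_\ell,\theta\bar L\}/\mu_{\ell-1}}$. It involves $\mu_{\ell-1}$, which is not bounded below by $\bar\mu$ in general, so we must show that no executed cycle has $\mu_{\ell-1}$ much smaller than $\bar\mu$. By part (a), a cycle $\ell$ is executed only if every earlier cycle restarted. In particular cycle $\ell-1$ restarted, which by \Cref{prop:nest_complex1}(c) forces $\mu_{\ell-2}>\bar\mu$. Hence $\mu_{\ell-1}=\mu_{\ell-2}/2>\bar\mu/2$. For $\ell=1$ we have $\mu_0\ge\bar\mu$ directly. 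Therefore $Q_\ell=\mathcal O(\sqrt{\bar L/\bar\mu})$.

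Substituting these three estimates into \cref{eq:eq1} gives the per-cycle bound \cref{ACGCompFISTA}.
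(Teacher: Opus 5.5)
Your proposal is correct and follows essentially the same route as the paper: part (a) comes from the halving rule together with \Cref{prop:nest_complex1}(b)--(c), and part (b) comes from substituting $\mu_{\ell-1}\ge\bar\mu/2$ (forced by the previous cycle having restarted) and $\bar M_0\le\underbar M_\ell\le\max\{\bar M_0,\theta\bar L\}\le\theta\bar L$ into \cref{eq:eq1}. One minor citation slip: monotonicity of $L_j$ cannot bound the final $L_j$ at a restart iteration from above, and \cref{upper bound} as stated only covers $L_{j-1}$. The needed bound $L_j\le\max\{\underbar M_{\ell-1},\theta\bar L\}$ instead comes from the backtracking test, since by $\bar L$-smoothness a doubling occurs only while $L_j<\theta\bar L/2$.
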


\begin{proof}
(a) It follows from \Cref{prop:nest_complex1}(b) that if the main while loop of a cycle of R-FISTA terminates successfully with stopcrit $\leq \epsilon$, then the pair $(y,v)$ that R-FISTA outputs satisfies \Cref{OutputRFISTA}, i.e., the pair is an $\epsilon$-optimal solution of \Cref{MainProblem}. Moreover,  \Cref{prop:nest_complex1}(c) implies that if the $\ell$-th cycle of R-FISTA is performed with $\mu_{\ell-1}\in (0,\bar \mu]$, then the main 
while loop of the cycle always stops successfully with stopcrit$\leq \epsilon$ and hence R-FISTA terminates. The conclusion of part(a) then follows from these two observations and the fact that $\mu_{\ell}$ is updated according to the rule $\mu_{\ell}=\mu_{\ell-1}/2$ if a restart occurs and a new cycle begins.

(b) 
First observe that part(a) and the 
fact that $\mu_0\geq \bar \mu$ imply
that $\mu_{\ell}\geq \min\{\mu_0,\bar \mu/2\}=\bar \mu/2$
for all $\ell\geq 1$. Moreover, it follows from the way $\underbar M_{\ell}$ is set in the Initialization phase of 
R-FISTA that $\bar M_0\leq \underbar M_{\ell}$. This conclusion, the fact that $\bar M_0\leq 2\bar L\leq \theta \bar L$, and a similar argument as the proof of Lemma 2.5
in \cite{AL39Sujanani} then imply that
\begin{equation}\label{Prelim Bound Underbar Ml}
\bar M_0\leq \underbar M_{\ell} \leq \max\left\{\bar M_0, \theta\bar L \right\}\leq \theta\bar L.
\end{equation}
Also observe that it follows from \Cref{prop:nest_complex1}(a) 
and the definitions of $\theta$, $\zeta_{\ell}$, and $Q_{\ell}$ in \Cref{D0 def}
that
each cycle of R-FISTA
performs at most
\begin{equation}\label{prelim complexity bound}
\mathcal O_1\left(\sqrt{\frac{\max\left\{\underbar
M_{\ell},\bar L\right\}}{\mu_{\ell-1}}}\log^{++} \left(\frac{D^2(\bar L^2+\underbar
M_{\ell}^2)}{\epsilon^2}\right) +\log_2^{+}(\bar L/\underbar
M_{\ell})\right)
\end{equation}
ACG iterations/gradient evaluations.
The result of part (b) then follows from the lower bound on $\mu_{\ell}$, the fact 
that $\mathcal O(\log(A))=\mathcal O(\log(A^{2}))$ for a scalar $A>0$,
and bounds \Cref{Prelim Bound Underbar Ml} and \Cref{prelim complexity bound}.
\end{proof}

We are now ready to prove \Cref{Total Complexity}. 
\begin{proof}[Proof of \Cref{Total Complexity}]
\Cref{Cycle Complexity}(a) implies that R-FISTA 
performs at most $\lceil \log^{++}_2\left(2\mu_0/\bar \mu\right) \rceil$ cycles before 
finding a pair $(y,v)$
that 
is an $\epsilon$-optimal solution
of \cref{MainProblem}. The result then follows from this observation 
and the fact that the total number of ACG iterations/gradient evaluations that R-FISTA performs can be upper bounded by the product of the number of 
cycles $\lceil \log^{++}_2\left(2\mu_0/\bar \mu\right) \rceil$ and the bound in \Cref{ACGCompFISTA}.
\end{proof}

\renewcommand{\baselinestretch}{.5}
\tiny
\scriptsize

\printindex
\addcontentsline{toc}{section}{Index}
\label{ind:index}

\typeout{}
\bibliographystyle{siam}
\bibliography{Proxacc_ref}
 \addcontentsline{toc}{section}{Bibliography}

\end{document}

%% file: inputusepackages.tex
\usepackage{xcolor}
\definecolor{OliveGreen}{rgb}{0,0.6,0}
\definecolor{tempblue}{RGB}{36, 56, 231 }
\usepackage{cite}
\usepackage{amssymb}
\usepackage{amsthm}
\usepackage{enumitem}    

\usepackage{mathrsfs}
\usepackage{mathtools}
\usepackage{lineno}
\usepackage{float}
\usepackage{fancyhdr}
\usepackage[us,12hr]{datetime} 
\usepackage{tabularx}
\usepackage{longtable}
\usepackage{latexsym}
\usepackage{fullpage}       
\usepackage{float}
\usepackage{verbatim}
\usepackage{multirow}

\definecolor{tablegray}{RGB}{215, 219, 221 }

\usepackage{makeidx}
\makeindex

\usepackage{algorithm}   
\usepackage{algorithmic}   

\makeindex

\usepackage[pagebackref=true]{hyperref}
\hypersetup{
	plainpages=false,       
	unicode=false,          
	pdftoolbar=true,        
	pdfmenubar=true,        
	pdffitwindow=false,     
	pdfstartview={FitH},    
	pdftitle={
Unassigned Euclidean Distance Geometry, uDGP
	           },    
	pdfnewwindow=true,      
	colorlinks=true,        
	linkcolor=blue,         
	citecolor=green,        
	filecolor=magenta,      
	urlcolor=cyan           
}
\usepackage[noabbrev]{cleveref}
\RemoveFromHook{label}[firstaid/cleveref]
\usepackage{refcount} 

\newboolean{PrintVersion}
\setboolean{PrintVersion}{false} 

\hypersetup{
    plainpages=false,       
    pdfpagelabels=true,     
    bookmarks=true,         
    unicode=false,          
    pdftoolbar=true,        
    pdfmenubar=true,        
    pdffitwindow=false,     
    pdfstartview={FitH},    
    pdftitle={Strict Feasibility and Sensitivity Analysis},    
    pdfnewwindow=true,      
    colorlinks=true,        
    linkcolor=blue,         
    citecolor=green,        
    filecolor=magenta,      
    urlcolor=cyan           
}
\ifthenelse{\boolean{PrintVersion}}{   
\hypersetup{	
    citecolor=black,%
    filecolor=black,%
    linkcolor=black,%
    urlcolor=black}
}{} 

\renewcommand{\baselinestretch}{1} 

\let\origdoublepage\cleardoublepage
\newcommand{\clearemptydoublepage}{%
\clearpage{\pagestyle{empty}\origdoublepage}}
\let\cleardoublepage\clearemptydoublepage

\restylefloat{table}

\crefname{subsection}{subsection}{subsections}
\Crefname{subsection}{Subsection}{Subsections}

\makeindex

\let\oldindex\index
\renewcommand*{\index}[1]{\oldindex{#1}\ignorespaces}

\newtheorem{theorem}{Theorem}[section]
\newtheorem{lemma}[theorem]{Lemma}

\newtheorem{definition}[theorem]{Definition}

\newtheorem{assump}[theorem]{Assumption}
\newtheorem{prop}[theorem]{Proposition}
\newtheorem{proposition}[theorem]{Proposition}

\newtheorem{remark}[theorem]{Remark}
\newtheorem{rem}[theorem]{Remark}

\crefname{thm}{Theorem}{Theorems}
\Crefname{thm}{Theorem}{Theorems}
\crefname{problem}{Problem}{Theorems}
\Crefname{problem}{Problem}{Theorems}
\crefname{conjecture}{Conjecture}{Theorems}
\Crefname{conjecture}{Conjecture}{Theorems}
\crefname{proposition}{Proposition}{Propositions}
\Crefname{proposition}{Proposition}{Propositions}
\crefname{prop}{Proposition}{Propositions}
\Crefname{prop}{Proposition}{Propositions}
\crefname{cor}{Corollary}{Corollaries}
\crefname{lemma}{Lemma}{Lemmas}
\Crefname{lemma}{Lemma}{Lemmas}
\crefname{lem}{Lemma}{Lemmas}
\Crefname{lem}{Lemma}{Lemmas}
\theoremstyle{definition}
\crefname{assump}{Assumption}{assumptions}
\Crefname{assump}{Assumption}{Assumptions}
\crefname{problem}{Problem}{problems}
\Crefname{problem}{Problem}{Problems}
\crefname{definition}{Definition}{definitions}
\Crefname{definition}{Definition}{Definitions}
\crefname{defn}{Definition}{definitions}
\Crefname{defn}{Definition}{Definitions}
\crefname{remark}{Remark}{Remarks}
\Crefname{remark}{Remark}{Remarks}
\crefname{rem}{Remark}{Remarks}
\Crefname{rem}{Remark}{Remarks}
\crefname{rmk}{Remark}{Remarks}
\Crefname{rmk}{Remark}{Remarks}
\crefname{example}{Example}{Examples}
\Crefname{example}{Example}{Examples}
\crefname{align}{}{}
\Crefname{align}{}{}
\crefname{equation}{}{}
\Crefname{equation}{}{}

\DeclareMathOperator{\argmin}{{argmin}}

\DeclareMathOperator{\dom}{{dom}}

\DeclareMathOperator{\dist}{{dist}}

\def\R{{\mathbb{R}}}

\newcommand{\pLC}{p^*_{\scriptscriptstyle LC}}   
\newcommand{\inner}[2]{\langle #1,#2\rangle}
\DeclareMathOperator{\cConv}{\overline{Conv}}
\global\long\def\tx{\tilde{x}}%
\newcommand{\Resk}{\rm{Res}_k}
\newcommand{\Resp}{\rm{Res}_p}
\newcommand{\Resd}{\rm{Res}_d}
\newcommand{\xb}[1]{\bar{x}_{#1}}
\newcommand{\xp}[1]{x_{#1}^{+}}
\newcommand{\xpp}[1]{x_{#1}^{++}}

\newcommand{\Dn}{\dom \psi_n}

\def\R{\mathbb{R}}
\def\E{\mathbb{E}}

\def\Rm{\mathbb{R}^m}

\newcommand{\cA}{{\mathcal A}}

\newcommand{\cL}{{\mathcal L}}

\newcommand{\cN}{{\mathcal N}}

\newcommand{\bbm}{\begin{bmatrix}}
\newcommand{\ebm}{\end{bmatrix}}
\newcommand{\bem}{\begin{pmatrix}}
\newcommand{\eem}{\end{pmatrix}}
\newcommand{\nc}{\newcommand}

\nc{\what}{\widehat}
\nc{\horzbar}{\rotatebox[origin=c]{90}{$\vert$}}
\def\<{\langle}
\def\>{\rangle}

\newcommand{\bd}{\operatorname{bd}}

\newcommand{\textdef}[1]{\index{#1}\textit{#1}}

\numberwithin{algorithm}{section}
\numberwithin{equation}{section}
\numberwithin{figure}{section}
\numberwithin{table}{section}

\DeclareMathOperator{\inte}{int}


%% file: table_QuadraticLC.tex
\begin{tabular}{|cc|ccc|ccc|ccc|} \hline
\multicolumn{2}{|c||}{Dimensions} & \multicolumn{3}{c|}{Time (s)} & \multicolumn{3}{c|}{Primal Feasibility Residual} & \multicolumn{3}{c|}{Dual Stationarity Residual} \\ \cline{1-2}\cline{3-5}\cline{6-8}\cline{9-11}
$m$ & $n$ & O-IAL & APF-IAL & ProxALM \cite{LuM} & O-IAL & APF-IAL & ProxALM \cite{LuM} & O-IAL & APF-IAL & ProxALM \cite{LuM} \\ \hline
     10 &      50 &    0.09 &    \textcolor{purple}{0.07} &    0.09 & 9.1959e-06 & 7.0414e-06 & 1.1720e-06 & 1.3176e-10 & 7.6083e-06 & 6.9513e-06 \\ \hline
     50 &     125 &    0.18 &    \textcolor{purple}{0.03} &    0.14 & 9.9027e-06 & 5.0281e-06 & 6.1081e-09 & 1.7364e-10 & 9.1839e-06 & 7.8900e-06 \\ \hline
    100 &     200 &    0.37 &    \textcolor{purple}{0.03} &    0.52 & 9.0125e-06 & 7.6086e-07 & 2.5729e-09 & 1.5800e-10 & 8.5710e-06 & 5.4161e-06 \\ \hline
    150 &     275 &    0.33 &    \textcolor{purple}{0.04} &    0.85 & 8.2824e-06 & 6.1373e-06 & 6.0852e-10 & 1.6413e-10 & 9.9256e-06 & 9.7421e-06 \\ \hline
    200 &     350 &    0.39 &    \textcolor{purple}{0.03} &    1.33 & 7.6745e-06 & 2.2687e-06 & 4.2170e-10 & 1.6393e-10 & 9.8571e-06 & 6.9411e-06 \\ \hline
    250 &     425 &    0.23 &    \textcolor{purple}{0.03} &    1.99 & 9.1271e-06 & 4.6282e-06 & 1.4520e-10 & 1.4806e-10 & 3.3230e-06 & 9.4211e-06 \\ \hline
    300 &     500 &    0.29 &    \textcolor{purple}{0.02} &    2.79 & 8.7465e-06 & 7.7660e-06 & 1.4695e-10 & 1.5001e-10 & 9.4454e-06 & 9.3530e-06 \\ \hline
    350 &     575 &    0.38 &    \textcolor{purple}{0.07} &    4.13 & 9.0311e-06 & 1.0688e-06 & 3.6827e-11 & 1.3399e-10 & 6.7502e-06 & 4.6030e-06 \\ \hline
    400 &     650 &    0.35 &    \textcolor{purple}{0.05} &    4.66 & 8.4926e-06 & 5.5555e-06 & 1.6717e-10 & 1.3553e-10 & 8.8359e-06 & 9.8516e-06 \\ \hline
    450 &     725 &    0.47 &    \textcolor{purple}{0.09} &    6.35 & 9.3200e-06 & 4.0478e-06 & 1.0157e-10 & 1.1782e-10 & 6.0721e-06 & 8.5942e-06 \\ \hline
    500 &     800 &    0.96 &    \textcolor{purple}{0.19} &    7.31 & 7.4388e-06 & 3.8783e-06 & 7.9476e-10 & 1.7589e-10 & 9.9275e-06 & 9.5685e-06 \\ \hline
    550 &     875 &    1.43 &    \textcolor{purple}{0.21} &   11.42 & 3.5687e-06 & 1.3724e-06 & 4.4192e-09 & 1.5287e-10 & 9.6858e-06 & 9.8851e-06 \\ \hline
    600 &     950 &    1.55 &    \textcolor{purple}{0.24} &   14.07 & 1.2666e-06 & 2.6860e-06 & 4.4561e-09 & 1.2568e-10 & 9.9306e-06 & 9.8263e-06 \\ \hline
    650 &    1025 &    1.93 &    \textcolor{purple}{0.31} &   19.59 & 1.5630e-06 & 6.8438e-06 & 2.6687e-09 & 1.3531e-10 & 9.7562e-06 & 9.8604e-06 \\ \hline
    700 &    1100 &    3.08 &    \textcolor{purple}{0.50} &   26.72 & 2.3606e-06 & 6.0140e-06 & 2.7868e-09 & 1.6383e-10 & 9.9026e-06 & 9.2066e-06 \\ \hline
\end{tabular}

%% file: table_LogReg.tex
\begin{tabular}{|cc|ccc|ccc|ccc|} \hline
\multicolumn{2}{|c||}{Dimensions} & \multicolumn{3}{c|}{Time (s)} & \multicolumn{3}{c|}{Primal Feasibility Residual} & \multicolumn{3}{c|}{Dual Stationarity Residual} \\ \cline{1-2}\cline{3-5}\cline{6-8}\cline{9-11}
$m$ & $n$ & O-IAL & APF-IAL & ProxALM \cite{LuM} & O-IAL & APF-IAL & ProxALM \cite{LuM} & O-IAL & APF-IAL & ProxALM \cite{LuM} \\ \hline
     50 &     100 &    0.65 &    \textcolor{purple}{0.22} &    4.71 & 7.0415e-06 & 6.8516e-06 & 3.6004e-09 & 1.2061e-10 & 9.9789e-06 & 9.4590e-06 \\ \hline
    100 &     150 &    1.19 &    \textcolor{purple}{0.19} &    4.83 & 6.9268e-06 & 5.0017e-06 & 1.2150e-08 & 1.2494e-10 & 9.8587e-06 & 8.9484e-06 \\ \hline
    125 &     200 &    1.51 &    \textcolor{purple}{0.24} &    7.19 & 9.9941e-06 & 6.4578e-06 & 1.6544e-09 & 1.2386e-10 & 9.9646e-06 & 9.5325e-06 \\ \hline
    150 &     300 &    1.37 &    \textcolor{purple}{0.34} &    8.96 & 4.7838e-06 & 4.7497e-06 & 1.2518e-09 & 1.2468e-10 & 9.9642e-06 & 9.1886e-06 \\ \hline
    175 &     400 &    2.34 &    \textcolor{purple}{0.42} &   12.62 & 5.9537e-06 & 4.4589e-06 & 3.4247e-09 & 1.2468e-10 & 9.9959e-06 & 9.9159e-06 \\ \hline
    200 &     500 &    1.25 &    \textcolor{purple}{0.42} &   15.02 & 3.9627e-06 & 6.8020e-06 & 1.3402e-09 & 1.2477e-10 & 9.9725e-06 & 6.8233e-06 \\ \hline
    250 &     750 &    2.84 &    \textcolor{purple}{0.74} &   25.77 & 3.5943e-06 & 5.5857e-06 & 3.5027e-09 & 1.2499e-10 & 9.9918e-06 & 9.5666e-06 \\ \hline
    300 &    1000 &    3.34 &    \textcolor{purple}{1.00} &   28.92 & 3.8711e-06 & 5.5445e-06 & 2.9092e-09 & 1.2496e-10 & 9.9868e-06 & 6.5960e-06 \\ \hline
    350 &    1200 &    3.61 &    \textcolor{purple}{1.31} &   47.98 & 9.9795e-06 & 8.2046e-06 & 5.2400e-10 & 1.2308e-10 & 9.9622e-06 & 9.8896e-06 \\ \hline
    400 &    1400 &    6.04 &    \textcolor{purple}{1.49} &   54.06 & 1.8704e-06 & 6.7112e-06 & 2.4277e-09 & 1.2487e-10 & 9.9592e-06 & 9.7753e-06 \\ \hline
    450 &    1500 &    9.61 &    \textcolor{purple}{1.61} &   47.48 & 2.8357e-06 & 6.4066e-06 & 1.8368e-09 & 1.2477e-10 & 9.9766e-06 & 9.8213e-06 \\ \hline
    500 &    1750 &   14.52 &    \textcolor{purple}{2.29} &   96.82 & 1.8923e-06 & 9.6769e-06 & 2.5087e-09 & 1.2433e-10 & 9.8801e-06 & 7.7157e-06 \\ \hline
    600 &    2000 &   10.51 &    \textcolor{purple}{3.40} &  173.07 & 2.7537e-06 & 4.0733e-06 & 3.2842e-09 & 1.2452e-10 & 9.9636e-06 & 7.6926e-06 \\ \hline
    800 &    2200 &   27.92 &    \textcolor{purple}{4.21} &  230.68 & 7.6531e-06 & 4.3184e-06 & 2.8696e-09 & 1.2441e-10 & 9.9961e-06 & 9.6622e-06 \\ \hline
   1000 &    2500 &   40.77 &    \textcolor{purple}{8.05} &  336.23 & 4.0537e-06 & 6.6977e-06 & 1.1896e-09 & 1.2455e-10 & 9.9896e-06 & 5.6936e-06 \\ \hline
\end{tabular}

%% file: table_ML1_ElasticNet.tex
\begin{tabular}{|cc|ccc|ccc|ccc|} \hline
\multicolumn{2}{|c||}{Dimensions} & \multicolumn{3}{c|}{Time (s)} & \multicolumn{3}{c|}{Primal Feasibility Residual} & \multicolumn{3}{c|}{Dual Stationarity Residual} \\ \cline{1-2}\cline{3-5}\cline{6-8}\cline{9-11}
$m$ & $n$ & O-IAL & APF-IAL & ProxALM \cite{LuM} & O-IAL & APF-IAL & ProxALM \cite{LuM} & O-IAL & APF-IAL & ProxALM \cite{LuM} \\ \hline
     50 &     100 &    0.13 &    \textcolor{purple}{0.04} &    0.29 & 8.6296e-06 & 7.1069e-06 & 7.9688e-08 & 9.9672e-11 & 6.9169e-06 & 3.3123e-06 \\ \hline
    100 &     125 &    0.20 &    \textcolor{purple}{0.02} &    0.35 & 7.1036e-06 & 4.8189e-06 & 9.9291e-09 & 2.1931e-10 & 9.5429e-06 & 5.1250e-07 \\ \hline
    140 &     200 &    0.30 &    \textcolor{purple}{0.03} &    0.55 & 7.0256e-06 & 6.0711e-06 & 6.0293e-08 & 2.4822e-10 & 9.5494e-06 & 2.8900e-06 \\ \hline
    180 &     250 &    0.31 &    \textcolor{purple}{0.04} &    0.76 & 5.5541e-06 & 4.5893e-06 & 8.6723e-09 & 2.4786e-10 & 6.3845e-06 & 6.1334e-07 \\ \hline
    200 &     280 &    0.45 &    \textcolor{purple}{0.05} &    0.87 & 6.0040e-06 & 5.4705e-06 & 8.7956e-08 & 2.3822e-10 & 8.4981e-06 & 7.5677e-06 \\ \hline
    250 &     300 &    0.63 &    \textcolor{purple}{0.04} &    1.09 & 9.2774e-06 & 3.7229e-06 & 9.2569e-09 & 2.2389e-10 & 6.0941e-06 & 9.5241e-07 \\ \hline
    350 &     400 &    1.34 &    \textcolor{purple}{0.05} &    1.91 & 8.9104e-06 & 4.4272e-06 & 8.9300e-09 & 2.4985e-10 & 9.9473e-06 & 8.0528e-06 \\ \hline
    400 &     450 &    2.23 &    \textcolor{purple}{0.07} &    3.01 & 9.7796e-06 & 6.5893e-06 & 1.4934e-08 & 1.9934e-10 & 7.3994e-06 & 4.1762e-06 \\ \hline
    450 &     520 &   15.62 &    \textcolor{purple}{0.30} &  122.30 & 9.9297e-06 & 5.5994e-06 & 3.5779e-06 & 2.4147e-10 & 9.9162e-06 & 6.7195e-06 \\ \hline
    500 &     600 &    3.95 &    \textcolor{purple}{0.24} &    9.85 & 9.3737e-06 & 2.6084e-06 & 1.8283e-08 & 2.4263e-10 & 9.8893e-06 & 9.8151e-06 \\ \hline
    550 &     700 &    2.23 &    \textcolor{purple}{0.34} &   12.16 & 7.5875e-06 & 1.4928e-06 & 1.4847e-08 & 2.4637e-10 & 9.9663e-06 & 5.6968e-06 \\ \hline
    650 &     850 &    2.36 &    \textcolor{purple}{0.44} &   16.77 & 6.3560e-06 & 6.4186e-06 & 3.7237e-09 & 2.4342e-10 & 9.9593e-06 & 9.5102e-06 \\ \hline
    750 &    1000 &    3.32 &    \textcolor{purple}{0.64} &   23.04 & 7.7759e-06 & 9.5629e-06 & 7.3995e-09 & 2.4916e-10 & 9.9151e-06 & 8.7765e-06 \\ \hline
    850 &    1200 &    6.80 &    \textcolor{purple}{1.56} &   54.32 & 5.2259e-06 & 7.5335e-06 & 2.0233e-09 & 2.4707e-10 & 9.9031e-06 & 7.1109e-06 \\ \hline
   1000 &    1500 &   12.25 &    \textcolor{purple}{3.26} &  149.21 & 3.9914e-06 & 5.8409e-06 & 4.3336e-09 & 2.4741e-10 & 9.9514e-06 & 8.1034e-06 \\ \hline
\end{tabular}

%% file: table_ML2_HuberizedSVM.tex
\begin{tabular}{|cc|ccc|ccc|ccc|} \hline
\multicolumn{2}{|c||}{Dimensions} & \multicolumn{3}{c|}{Time (s)} & \multicolumn{3}{c|}{Primal Feasibility Residual} & \multicolumn{3}{c|}{Dual Stationarity Residual} \\ \cline{1-2}\cline{3-5}\cline{6-8}\cline{9-11}
$m$ & $n$ & O-IAL & APF-IAL & ProxALM \cite{LuM} & O-IAL & APF-IAL & ProxALM \cite{LuM} & O-IAL & APF-IAL & ProxALM \cite{LuM} \\ \hline
    100 &     200 &    1.51 &    \textcolor{purple}{0.14} &    0.98 & 9.8647e-06 & 2.8261e-06 & 2.0833e-08 & 2.4535e-10 & 9.9942e-06 & 9.9698e-06 \\ \hline
    125 &     300 &    3.06 &    \textcolor{purple}{0.25} &    2.14 & 9.4398e-06 & 4.4665e-07 & 2.4831e-08 & 2.4959e-10 & 9.9785e-06 & 8.7184e-06 \\ \hline
    160 &     350 &    4.00 &    \textcolor{purple}{0.36} &    3.66 & 8.5234e-06 & 2.8080e-07 & 2.7076e-08 & 2.4924e-10 & 9.9912e-06 & 9.8796e-06 \\ \hline
    220 &     420 &    6.24 &    \textcolor{purple}{0.23} &    3.06 & 9.3216e-06 & 1.8530e-06 & 1.2041e-07 & 2.4800e-10 & 9.9502e-06 & 7.0318e-06 \\ \hline
    300 &     500 &   11.38 &    \textcolor{purple}{0.26} &    2.86 & 9.8340e-06 & 7.1945e-06 & 7.4209e-08 & 2.4857e-10 & 9.8958e-06 & 6.3296e-06 \\ \hline
    340 &     650 &   13.63 &    \textcolor{purple}{0.52} &    5.69 & 9.2159e-06 & 2.1553e-06 & 2.8755e-08 & 2.4860e-10 & 9.9846e-06 & 6.2586e-06 \\ \hline
    410 &     800 &   17.75 &    \textcolor{purple}{0.76} &   10.88 & 9.3374e-06 & 2.2401e-06 & 7.6778e-08 & 2.4880e-10 & 9.9554e-06 & 5.0307e-06 \\ \hline
    500 &    1000 &   38.64 &    \textcolor{purple}{1.76} &   25.09 & 9.4024e-06 & 2.3469e-06 & 1.2538e-07 & 2.4732e-10 & 9.9932e-06 & 7.4665e-06 \\ \hline
    580 &    1300 &   77.52 &    \textcolor{purple}{4.40} &   50.60 & 8.3424e-06 & 2.3128e-06 & 3.4474e-08 & 2.4784e-10 & 9.9934e-06 & 9.1382e-06 \\ \hline
    630 &    1800 &  103.71 &    \textcolor{purple}{9.61} &  158.06 & 7.5187e-06 & 3.4824e-06 & 4.8961e-08 & 2.4937e-10 & 9.9756e-06 & 5.3972e-06 \\ \hline
    720 &    2200 &  185.82 &   \textcolor{purple}{16.02} &  256.62 & 9.8946e-06 & 5.4031e-06 & 1.1028e-08 & 2.4799e-10 & 9.9957e-06 & 7.2998e-06 \\ \hline
    800 &    2500 &  165.66 &   \textcolor{purple}{16.03} &  310.64 & 8.0908e-06 & 3.3600e-06 & 6.8596e-09 & 2.4931e-10 & 9.9595e-06 & 6.7552e-06 \\ \hline
   1000 &    2800 &  248.27 &   \textcolor{purple}{16.39} &  360.11 & 7.8138e-06 & 3.8748e-06 & 1.6719e-08 & 2.4804e-10 & 9.8386e-06 & 5.0048e-06 \\ \hline
   1200 &    3500 &  387.89 &   \textcolor{purple}{25.50} &  639.73 & 9.7420e-06 & 9.5725e-06 & 5.6077e-09 & 2.4937e-10 & 9.9801e-06 & 5.3442e-06 \\ \hline
   2000 &    4000 & 1513.25 &   \textcolor{purple}{42.62} & 1333.21 & 9.7640e-06 & 3.8331e-06 & 6.5937e-08 & 2.4522e-10 & 9.9615e-06 & 6.6974e-06 \\ \hline
\end{tabular}

%% file: table_ML3_BayesianDOptimal.tex
\begin{tabular}{|cc|ccc|ccc|ccc|} \hline
\multicolumn{2}{|c||}{Dimensions} & \multicolumn{3}{c|}{Time (s)} & \multicolumn{3}{c|}{Primal Feasibility Residual} & \multicolumn{3}{c|}{Dual Stationarity Residual} \\ \cline{1-2}\cline{3-5}\cline{6-8}\cline{9-11}
$m$ & $n$ & O-IAL & APF-IAL & ProxALM \cite{LuM} & O-IAL & APF-IAL & ProxALM \cite{LuM} & O-IAL & APF-IAL & ProxALM \cite{LuM} \\ \hline
      5 &      25 &    0.45 &    \textcolor{purple}{0.09} &    1.25 & 2.7883e-07 & 5.9706e-07 & 4.5457e-09 & 1.2415e-10 & 9.9057e-06 & 9.4089e-06 \\ \hline
     10 &      30 &    0.61 &    \textcolor{purple}{0.27} &    2.55 & 4.5228e-07 & 8.2275e-07 & 3.9348e-08 & 1.1904e-10 & 9.9933e-06 & 8.7112e-06 \\ \hline
     20 &      50 &    1.12 &    \textcolor{purple}{0.60} &    5.23 & 7.9678e-06 & 5.8942e-06 & 8.5390e-08 & 1.2430e-10 & 9.9840e-06 & 9.3609e-06 \\ \hline
     30 &      70 &    5.88 &    \textcolor{purple}{2.10} &   24.69 & 9.8960e-06 & 1.0200e-06 & 2.2628e-07 & 1.2487e-10 & 9.9899e-06 & 3.0808e-06 \\ \hline
     50 &      80 &    7.15 &    \textcolor{purple}{0.51} &    4.75 & 8.2063e-06 & 1.5803e-06 & 8.4101e-08 & 1.2217e-10 & 9.9959e-06 & 9.8783e-06 \\ \hline
     60 &      90 &    9.33 &    \textcolor{purple}{0.59} &    5.57 & 8.9226e-06 & 6.4077e-06 & 8.0434e-08 & 1.2488e-10 & 9.9937e-06 & 4.0106e-08 \\ \hline
     75 &     115 &   21.45 &    \textcolor{purple}{0.85} &   11.64 & 9.1407e-06 & 4.1737e-06 & 1.3451e-07 & 1.2238e-10 & 9.9774e-06 & 8.2729e-06 \\ \hline
     85 &     125 &   23.16 &    \textcolor{purple}{1.03} &   12.00 & 9.7713e-06 & 1.8210e-06 & 1.9272e-07 & 1.2352e-10 & 9.9849e-06 & 9.3506e-06 \\ \hline
    100 &     175 &   44.07 &    \textcolor{purple}{2.41} &   34.99 & 9.1592e-06 & 4.1352e-06 & 4.2160e-07 & 1.2367e-10 & 9.9955e-06 & 5.8512e-08 \\ \hline
    120 &     200 &   71.97 &    \textcolor{purple}{3.58} &   73.93 & 9.6673e-06 & 2.3166e-06 & 9.1791e-06 & 1.2487e-10 & 9.9984e-06 & 6.9781e-07 \\ \hline
    130 &     220 &   72.12 &    \textcolor{purple}{4.48} &   78.33 & 9.7896e-06 & 4.1079e-06 & 3.5511e-06 & 1.2437e-10 & 9.9813e-06 & 2.0627e-07 \\ \hline
    140 &     230 &   97.34 &    \textcolor{purple}{5.40} &  117.74 & 9.7533e-06 & 4.9281e-06 & 8.5930e-06 & 1.2353e-10 & 9.9925e-06 & 3.5834e-06 \\ \hline
    160 &     250 &  135.96 &    \textcolor{purple}{7.10} &  194.70 & 9.7997e-06 & 6.4149e-06 & 5.2830e-06 & 1.2497e-10 & 9.9970e-06 & 1.8520e-07 \\ \hline
    180 &     275 &  192.57 &    \textcolor{purple}{8.58} &  317.97 & 9.9753e-06 & 5.9361e-06 & 5.2906e-06 & 1.2344e-10 & 9.9924e-06 & 1.4934e-07 \\ \hline
    200 &     300 &  204.77 &   \textcolor{purple}{11.30} &  282.49 & 9.9368e-06 & 9.2390e-06 & 7.9677e-06 & 1.2329e-10 & 9.9961e-06 & 3.3229e-07 \\ \hline
\end{tabular}

%% file: table_quantSDP4_QuantumTomography.tex
\begin{tabular}{|cc|ccc|ccc|ccc|} \hline
\multicolumn{2}{|c||}{Dimensions} & \multicolumn{3}{c|}{Time (s)} & \multicolumn{3}{c|}{Primal Feasibility Residual} & \multicolumn{3}{c|}{Dual Stationarity Residual} \\ \cline{1-2}\cline{3-5}\cline{6-8}\cline{9-11}
$m$ & $n$ & O-IAL & APF-IAL & ProxALM \cite{LuM} & O-IAL & APF-IAL & ProxALM \cite{LuM} & O-IAL & APF-IAL & ProxALM \cite{LuM} \\ \hline
      8 &     144 &    0.68 &    \textcolor{purple}{0.44} &   10.18 & 1.5906e-12 & 6.9649e-06 & 9.8046e-10 & 1.7489e-10 & 9.9437e-06 & 6.4462e-06 \\ \hline
     10 &     196 &    0.45 &    \textcolor{purple}{0.28} &   11.67 & 5.2495e-13 & 4.2385e-06 & 2.2599e-09 & 1.7676e-10 & 9.9640e-06 & 6.3797e-06 \\ \hline
     12 &     256 &    0.49 &    \textcolor{purple}{0.41} &   14.21 & 9.9633e-13 & 3.2955e-06 & 2.3523e-09 & 1.7645e-10 & 9.9992e-06 & 5.4347e-06 \\ \hline
     15 &     324 &    6.78 &    \textcolor{purple}{5.90} &  358.13 & 8.2771e-06 & 9.0839e-07 & 3.5424e-09 & 1.7670e-10 & 9.9997e-06 & 8.1455e-06 \\ \hline
     18 &     400 &    4.24 &    \textcolor{purple}{2.17} &  181.61 & 2.3007e-12 & 1.5707e-06 & 2.4364e-09 & 1.7641e-10 & 9.9970e-06 & 7.7798e-06 \\ \hline
     21 &     484 &    3.27 &    \textcolor{purple}{1.88} &   55.63 & 9.7588e-08 & 1.6571e-06 & 5.1399e-10 & 1.7654e-10 & 9.9968e-06 & 6.5909e-06 \\ \hline
     24 &     576 &    2.55 &    \textcolor{purple}{1.45} &   35.36 & 6.6984e-07 & 2.7532e-06 & 1.5051e-09 & 1.7600e-10 & 9.9852e-06 & 6.7207e-06 \\ \hline
     27 &     676 &    3.92 &    \textcolor{purple}{1.27} &   35.01 & 1.7832e-06 & 8.3042e-07 & 1.9601e-09 & 1.7504e-10 & 9.9963e-06 & 9.1124e-06 \\ \hline
     30 &     784 &    3.70 &    \textcolor{purple}{1.33} &   37.50 & 3.2894e-06 & 1.6693e-06 & 1.3051e-10 & 1.7633e-10 & 9.9162e-06 & 7.9736e-06 \\ \hline
     35 &    1024 &    5.80 &    \textcolor{purple}{1.38} &   34.42 & 8.1250e-06 & 1.8668e-06 & 3.7588e-10 & 1.7563e-10 & 9.9952e-06 & 6.8599e-06 \\ \hline
     42 &    1296 &    4.57 &    \textcolor{purple}{1.65} &   38.22 & 1.6795e-07 & 1.6715e-06 & 4.2726e-10 & 1.7626e-10 & 9.9749e-06 & 9.3537e-06 \\ \hline
     50 &    1600 &    4.16 &    \textcolor{purple}{1.84} &   45.34 & 2.6703e-07 & 1.7037e-06 & 3.6872e-10 & 1.7611e-10 & 9.9895e-06 & 9.2976e-06 \\ \hline
     61 &    2304 &    7.75 &    \textcolor{purple}{2.51} &   82.04 & 3.4542e-07 & 1.0970e-06 & 5.2736e-10 & 1.7339e-10 & 9.9901e-06 & 9.1086e-06 \\ \hline
     73 &    3136 &   11.14 &    \textcolor{purple}{3.66} &  115.43 & 4.8447e-07 & 1.4990e-06 & 6.9661e-10 & 1.7567e-10 & 9.9307e-06 & 9.9191e-06 \\ \hline
     86 &    4096 &   18.02 &    \textcolor{purple}{6.10} &  232.20 & 7.0560e-07 & 1.8597e-06 & 1.1957e-09 & 1.7302e-10 & 9.9470e-06 & 9.3482e-06 \\ \hline
\end{tabular}